 \theoremstyle{plain}
 \newtheorem{theorem}{Theorem}[section]
 \newtheorem{lemma}[theorem]{Lemma}
 \newtheorem{corollary}[theorem]{Corollary}
 \newtheorem{proposition}[theorem]{Proposition}
 \newtheorem{remark}[theorem]{Remark}
 \theoremstyle{definition}
 \newtheorem{definition}[theorem]{Definition}
 \title{On J-equation}
 \author{Gao Chen}
\begin{document}

 \maketitle
\begin{abstract}
   In this paper, we prove that for any K\"ahler metrics $\omega_0$ and $\chi$ on $M$, there exists $\omega_\varphi=\omega_0+\sqrt{-1}\partial\bar\partial\varphi>0$ satisfying the J-equation $\mathrm{tr}_{\omega_\varphi}\chi=c$ if and only if $(M,[\omega_0],[\chi])$ is uniformly J-stable. As a corollary, we can find many constant scalar curvature K\"ahler metrics with $c_1<0$. Using the same method, we also prove a similar result for the deformed Hermitian-Yang-Mills equation when the angle is in $(\frac{n\pi}{2}-\frac{\pi}{4},\frac{n\pi}{2})$.
\end{abstract}
\section{Introduction}
  In this paper, our main goal is to prove the equivalence of the solvability of the J-equation and a notion of stability. Given K\"ahler metrics $\omega_0$ and $\chi$ on $M$, the J-equation is defined as \[\mathrm{tr}_{\omega_\varphi}\chi=c\] for \[\omega_\varphi=\omega_0+\sqrt{-1}\partial\bar\partial\varphi>0.\]

 In general, the equivalence of the stability and the solvability of an equation is very common in geometry. One of the first results in this direction was the celebrated work of Donaldson-Uhlenbeck-Yau \cite{DonaldsonGauge, UhlenbeckYau} on Hermitian-Yang-Mills connections. Inspired by the study of Hermitian-Yang-Mills connections, Donaldson proposed many questions including the study of J-equation using the moment map interpretation \cite{DonaldsonJEquation}. It was the first appearance of the J-equation in the literature.

Yau conjectured that the existence of Fano K\"ahler-Einstein metric is also equivalent to some kind of stability \cite{YauStability}. Tian made it precise in Fano K\"ahler-Einstein case and it was called the K-stability condition \cite{Tian}. It was generalized by Donaldson to the constant scalar curvature K\"ahler (cscK) problem in projective case \cite{Donaldson}. This conjecture has been proved by Chen-Donaldson-Sun \cite{ChenDonaldsonSun1, ChenDonaldsonSun2, ChenDonaldsonSun3} in Fano K\"ahler-Einstein case. However, there is evidence that this conjecture may be wrong in cscK case \cite{ApostolovCalderbankGauduchonTonnesen}. There is a folklore conjecture that the uniform version of K-stability may be a correct substitution. When restricted to special test configurations called ``degeneration to normal cones", the uniform K-stability is reduced to Ross-Thomas's uniform slope K-stability \cite{RossThomas}. More recently, the projective assumption was removed by the work of Dervan-Ross \cite{DervanRoss} and independently by Sj\"ostr\"om Dyrefelt \cite{SjostromDyrefelt}.

It is easy to see that cscK metrics are critical points of the K-energy functional \cite{Chen}
\[K(\varphi)=\int_M\log(\frac{\omega_\varphi^n}{\omega_0^n})\frac{\omega_\varphi^n}{n!}+\mathcal{J}_{-\mathrm{Ric}(\omega_0)}(\varphi).\]
The $\mathcal{J}_\chi$  functional for any (1,1)-form $\chi$ is defined by
\[\mathcal{J}_\chi(\varphi)=\frac{1}{n!}\int_M\varphi\sum_{k=0}^{n-1}\chi\wedge\omega_0^k\wedge\omega_\varphi^{n-1-k}-\frac{1}{(n+1)!}\int_M c_0\varphi\sum_{k=0}^{n}\omega_0^k\wedge\omega_\varphi^{n-k},\]
where $c_0$ is the constant given by \[\int_M\chi\wedge\frac{\omega_0^{n-1}}{(n-1)!}-c_0\frac{\omega_0^n}{n!}=0.\] When $\chi$ is a K\"ahler form, it is well known that the critical point of the $\mathcal{J}_\chi$  functional is exactly the solution to the J-equation. It was the second appearance of the J-equation in the literature. Following this formula, using the interpolation of the K-energy and the $\mathcal{J}_\chi$ functional, Chen-Cheng \cite{ChenCheng1, ChenCheng2, ChenCheng3} proved that the existence of cscK metric is equivalent to the geodesic stability of K-energy. However, the relationship between the existence of cscK metrics and the uniform K-stability is still open.

When we replace the K-energy by the $\mathcal{J}_\chi$ functional for a K\"ahler form $\chi$, the analogy of the K-stability and the slope stability conditions were proposed by Lejmi and Sz\'ekelyhidi \cite{LejmiSzekelyhidi}. See also Section 6 of \cite{DervanRoss} for the extension to non-projective case. The main theorem of this paper proves the equivalence between the existence of the critical point of $\mathcal{J}_\chi$ functional, the solvability of J-equation, the coerciveness of $\mathcal{J}_\chi$ functional, and the uniform J-stability as well as the uniform slope J-stability.
\begin{theorem}
(Main Theorem)
Fix a K\"ahler manifold $M^n$ with K\"ahler metrics $\chi$ and $\omega_0$. Let $c_0>0$ be the constant such that
\[\int_M \chi\wedge\frac{\omega_0^{n-1}}{(n-1)!}=c_0\int_M\frac{\omega_0^n}{n!},\]
then the followings are equivalent:

(1) There exists a unique smooth function $\varphi$ up to a constant such that $\omega_\varphi=\omega_0+\sqrt{-1}\partial\bar\partial\varphi>0$ satisfies the J-equation
\[\mathrm{tr}_{\omega_\varphi}\chi=c_0;\]

(2) There exists a unique smooth function $\varphi$ up to a constant such that $\omega_\varphi=\omega_0+\sqrt{-1}\partial\bar\partial\varphi>0$ satisfies the J-equation
\[\chi\wedge\frac{\omega_\varphi^{n-1}}{(n-1)!}=c_0\frac{\omega_\varphi^n}{n!};\]

(3) There exists a unique smooth function $\varphi$ up to a constant such that $\varphi$ is the critical point of the $\mathcal{J}_\chi$ functional;

(4) The $\mathcal{J}_\chi$ functional is coercive, in other words, there exist $\epsilon_{1.1}>0$ and another constant $C_{1.2}$ such that $\mathcal{J}_\chi(\varphi)\ge \epsilon_{1.1}\mathcal{J}_{\omega_0}(\varphi)-C_{1.2}$;

(5) $(M,[\omega_0],[\chi])$ is uniformly J-stable, in other words, there exists $\epsilon_{1.1}>0$ such that for all K\"ahler test configurations $(\mathcal{X},\Omega)$ defined as Definition 2.10 of \cite{DervanRoss}, the numerical invariant $J_{[\chi]}(\mathcal{X},\Omega)$ defined as Definition 6.3 of \cite{DervanRoss} satisfies $J_{[\chi]}(\mathcal{X},\Omega)\ge\epsilon_{1.1}J_{[\omega_0]}(\mathcal{X},\Omega)$;

(6) $(M,[\omega_0],[\chi])$ is uniformly slope J-stable, in other words, there exists $\epsilon_{1.1}>0$ such that for any subvariety $V$ of $M$, the degeneration to normal cone $(\mathcal{X},\Omega)$ defined as Example 2.11 (ii) of \cite{DervanRoss} satisfies $J_{[\chi]}(\mathcal{X},\Omega)\ge\epsilon_{1.1}J_{[\omega_0]}(\mathcal{X},\Omega)$;

(7) There exists $\epsilon_{1.1}>0$ such that
\[\int_V (c_0-(n-p)\epsilon_{1.1})\omega_0^p-p\chi\wedge\omega_0^{p-1}\ge0\] for all $p$-dimensional subvarieties $V$ with $p=1,2,...,n$.

\label{Main-theorem}
\end{theorem}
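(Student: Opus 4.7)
The plan is to close the loop of implications $(1)\Leftrightarrow(2)\Leftrightarrow(3)\Rightarrow(4)\Rightarrow(5)\Rightarrow(6)\Leftrightarrow(7)\Rightarrow(1)$. The equivalences $(1)\Leftrightarrow(2)$ and $(1)\Leftrightarrow(3)$ are direct: (2) is (1) after multiplication by $\omega_\varphi^n/n!$, while a computation of the first variation of $\mathcal{J}_\chi$ identifies its critical points with solutions of $\mathrm{tr}_{\omega_\varphi}\chi=c_0$; the normalization of $c_0$ guarantees that the mean-value obstruction vanishes. Uniqueness in (1)--(3) and the implication $(3)\Rightarrow(4)$ both follow from the convexity of $\mathcal{J}_\chi$ along $C^{1,1}$-geodesics in the space of K\"ahler potentials, together with the fact that the second variation is strictly positive transverse to the constants at a smooth critical point. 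A standard compactness/convexity argument along geodesics parametrized by $\mathcal{J}_{\omega_0}$ then promotes the existence of a smooth critical point to coercivity against $\mathcal{J}_{\omega_0}$.

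For the sequence $(4)\Rightarrow(5)\Rightarrow(6)\Leftrightarrow(7)$, to every K\"ahler test configuration $(\mathcal{X},\Omega)$ one associates a weak geodesic ray $\{\varphi_t\}$ in the space of K\"ahler potentials, and the asymptotic slopes of $\mathcal{J}_\chi$ and $\mathcal{J}_{\omega_0}$ along this ray coincide with the Dervan--Ross invariants $J_{[\chi]}(\mathcal{X},\Omega)$ and $J_{[\omega_0]}(\mathcal{X},\Omega)$. Coerciveness of $\mathcal{J}_\chi$ therefore forces the stability inequality in (5), and restriction to degeneration-to-normal-cone test configurations gives (6). The equivalence $(6)\Leftrightarrow(7)$ is an explicit intersection-theoretic computation on the blow-up $\mathcal{X}$ of $M\times\mathbb{P}^1$ along $V\times\{0\}$ with K\"ahler class $\pi^*[\omega_0]-\epsilon E$; both invariants become polynomials in $\epsilon$, and optimizing the comparison $J_{[\chi]}\ge\epsilon_{1.1}J_{[\omega_0]}$ reduces exactly to the stated inequality $\int_V(c_0-(n-p)\epsilon_{1.1})\omega_0^p\ge p\int_V\chi\wedge\omega_0^{p-1}$ for each $p$-dimensional $V$.

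The genuinely hard step is $(7)\Rightarrow(1)$, where an algebraic numerical condition must produce a smooth solution of a fully nonlinear PDE. The plan is a continuity method for
\[
t\,\mathrm{tr}_{\omega_{\varphi_t}}\chi + (1-t)\,\mathrm{tr}_{\omega_{\varphi_t}}\omega_0 = tc_0+(1-t)n,
\]
trivially solved at $t=0$ by $\varphi_0=0$. Openness is the implicit function theorem applied to the linearization, which is a second-order elliptic operator with trivial kernel modulo constants. Closedness requires uniform $C^{2,\alpha}$ estimates in $t$, after which bootstrapping produces smooth solutions. The $C^0$ estimate should come from upgrading (7) to a coercivity statement for the continuity path via pluripotential theory, comparing $\sup\varphi_t$ with $\mathcal{J}_{\omega_0}(\varphi_t)$ and extracting, in case of a blow-up, a limiting positive current supported on a subvariety violating the inequality in (7). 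The main obstacle is the $C^2$ estimate: the standard maximum principle applied to $\log\mathrm{tr}_{\omega_0}\omega_{\varphi_t}-A\varphi_t$ yields error terms that do not absorb without additional coercive geometric input, and the new idea must be to use the strict positivity in (7) quantitatively in a blow-up/compactness argument whose limiting obstruction is again a subvariety contradicting (7). Once $C^2$ is established the equation becomes uniformly elliptic and concave in the Hessian, so Evans--Krylov and Schauder estimates close the continuity method and produce the smooth solution, whose uniqueness modulo constants was already recorded in the first paragraph.
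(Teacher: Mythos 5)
The decomposition into the easy implications $(1)\Leftrightarrow(2)\Leftrightarrow(3)$, $(1)\Leftrightarrow(4)$, $(4)\Rightarrow(5)\Rightarrow(6)\Rightarrow(7)$ matches the paper (which cites Collins--Sz\'ekelyhidi, Dervan--Ross and Lejmi--Sz\'ekelyhidi for these), and the remark about convexity along geodesics is a correct route to uniqueness. The genuine content of the theorem is $(7)\Rightarrow(1)$, and that is where your proposal has a real gap.

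You correctly observe that the Song--Weinkove type maximum-principle $C^2$-estimate for the continuity path only closes with extra geometric input, but the phrase \emph{``the new idea must be to use the strict positivity in (7) quantitatively in a blow-up/compactness argument whose limiting obstruction is again a subvariety contradicting (7)''} names the gap without filling it. The a priori estimate for the J-equation does \emph{not} hold under the numerical condition (7); it holds under the pointwise sub-solution (cone) condition $c\,\omega^{n-1}-(n-1)\chi\wedge\omega^{n-2}>0$. So the actual problem is to produce such a sub-solution from (7), and no blow-up argument on $M$ alone is known to do this. The paper's strategy is entirely different and proceeds in three steps that your plan does not contain. First, one proves solvability of a perturbed equation $\mathrm{tr}_{\omega_\varphi}\chi+f\,\chi^n/\omega_\varphi^n=c$ \emph{assuming} a sub-solution in the class (Theorem \ref{Solvability-assuming-sub-solution}, Section 2). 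Second, one runs a Demailly--Paun ``concentration of mass'' argument on the product $M\times M$: solving a perturbed equation there with an additional $\epsilon\sqrt{-1}\partial\bar\partial\psi_\epsilon$ term concentrated along the diagonal, pushing forward to $M$, and extracting a closed positive \emph{current} $\omega_{1.5}\in[\omega_0-\epsilon_{1.4}\chi]$ satisfying $c\,\omega_{1.5}^{n-1}-(n-1)\chi\wedge\omega_{1.5}^{n-2}\ge0$ in a suitable weak sense (Theorem \ref{Current-sub-solution}, Section 3). Third, by Siu's theorem the set $Y$ where this current has Lelong number $\ge\epsilon$ is a proper subvariety; an \emph{induction on dimension} (applied to $Y$, after a Hironaka resolution to make it smooth) produces a smooth sub-solution near $Y$, which is then glued to the B\l{}ocki--Ko\l{}odziej smoothing of the current away from $Y$ via the regularized maximum, yielding a global smooth sub-solution; Step 1 then finishes (Section 4). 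The inductive structure also forces proving the stronger statement Theorem \ref{Main-induction} with the auxiliary function $f$, since the induction and the product-manifold argument need that generality. None of this --- the passage to $M\times M$, the current sub-solution, the Lelong-number/Siu stratification, the induction on dimension, or the gluing --- is present or even suggested by your ``blow-up/compactness'' sketch, so the central implication is unproved in your proposal.
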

\begin{remark}
Lejmi and Sz\'ekelyhidi's original conjecture is that the solvability of \[\mathrm{tr}_{\omega_\varphi}\chi=c_0\] is equivalent to \[\int_V c_0\omega_0^p-p\chi\wedge\omega_0^{p-1}>0\] for all $p$-dimensional subvarieties $V$ with $p=1,2,...,n-1$ \cite{LejmiSzekelyhidi}. However, it seems that our uniform version is more natural from geometric point of view.
\end{remark}
\begin{remark}
It is well known that there exists a constant $C_{1.3}$ depending on $n$ such that the $\mathcal{J}_{\omega_0}$ functional
\[\int_0^1(\int_M\varphi(\omega_0\wedge\frac{\omega_{t\varphi}^{n-1}}{(n-1)!}-n\frac{\omega_{t\varphi}^n}{n!}))dt=\int_0^1(\sqrt{-1}\int_M\partial\varphi\wedge\bar\partial\varphi\wedge\frac{t\omega_{t\varphi}^{n-1}}{(n-1)!})dt\]
and Aubin's I-functional \[\int_M\varphi(\omega_0^n-\omega_\varphi^n)
=\sqrt{-1}\int_M\partial\varphi\wedge\bar\partial\varphi\wedge\sum_{k=0}^{n}\omega_0^k\wedge\omega_\varphi^{n-k}\]
satisfy \[C_{1.3}^{-1}\int_M\varphi(\omega_0^n-\omega_\varphi^n)\le\mathcal{J}_{\omega_0}(\varphi)\le C_{1.3}\int_M\varphi(\omega_0^n-\omega_\varphi^n).\]
For example,  Collins and Sz\'ekelyhidi used this fact and their Definition 20 in \cite{CollinsSzekelyhidi} replaced $\mathcal{J}_{\omega_0}(\varphi)$ by $\int_M\varphi(\omega_0^n-\omega_\varphi^n)$ in the definition of the coerciveness which was called ``properness" in \cite{CollinsSzekelyhidi}. By (3) of \cite{Aubin}, Aubin's I-functional can also be replaced by Aubin's J-functional in the definition of coerciveness. Accordingly, in the definition of uniform stability, the numerical invariant $J_{[\omega_0]}(\mathcal{X},\Omega)$ can be replaced by the minimum norm of $(\mathcal{X},\Omega)$ defined as Definition 2.18 of \cite{DervanRoss}. By (62) of \cite{Darvas}, Aubin's J-functional can be further replaced by the $d_1$ distance in the definition of the coerciveness when $\varphi$ is normalized such that the Aubin-Mabuchi energy of $\varphi$ is 0.
\label{J-I-equivalence}
\end{remark}
\begin{remark}
By Proposition 2 of \cite{Chen}, if the solution to the J-equation exists, it is unique up to a constant. It is easy to see that (1) and (2) are equivalent. The equivalence between (2) and (3) follows from the formula
\[\frac{d\mathcal{J}_\chi}{dt}=\int_M\frac{\partial\varphi}{\partial t}(\chi\wedge\frac{\omega_\varphi^{n-1}}{(n-1)!}-c_0\frac{\omega_\varphi^n}{n!}).\]
By Proposition 21 and Proposition 22 of \cite{CollinsSzekelyhidi} and Remark \ref{J-I-equivalence}, (1) and (4) are equivalent. By Corollary 6.5 of \cite{DervanRoss}, (4) implies (5). It is trivial that (5) implies (6). By \cite{LejmiSzekelyhidi}, (6) implies (7) in the projective case if $\epsilon_{1.1}$ is replaced by 0. However, it is easy to see that it is also true in non-projective case and for positive $\epsilon_{1.1}$. Thus, we only need to prove that (7) implies (1) in Theorem \ref{Main-theorem}. Remark that there is a simpler proof that (1) implies (7). Let $\chi=\delta_{ij}$ and $\omega_\varphi=\lambda_i\delta_{ij}$, then for any $c>0$, the condition \[c\omega_\varphi^p-p\chi\wedge\omega_\varphi^{p-1}\ge0\] is equivalent to \[\sum_{j=1}^p\frac{1}{\lambda_{i_j}}\le c\] for all distinct $p$ numbers $\{i_1, i_2, ..., i_p\}\subset\{1, 2, ..., n\}$. So $\mathrm{tr}_{\omega_\varphi}\chi=c_0$ as well as the upper bounds of $\lambda_i$ imply that for small enough $\epsilon_{1.1}>0$, \[(c_0-(n-p)\epsilon_{1.1})\omega_\varphi^p-p\chi\wedge\omega_\varphi^{p-1}\ge0\] for all $p=1, 2, ..., n$. (4) follows from the fact that \[\int_V (c_0-(n-p)\epsilon_{1.1})\omega_0^p-p\chi\wedge\omega_0^{p-1}=\int_V (c_0-(n-p)\epsilon_{1.1})\omega_\varphi^p-p\chi\wedge\omega_\varphi^{p-1}.\]
\label{J-equation-implies-stability}
\end{remark}

When $c_1(M)<0$, we can choose $\chi$ as a K\"ahler form in $-c_1(M)$. Since $x\log x$ is bounded from below for any $x\in\mathbb{R}$, the entropy $\int_M\log(\frac{\omega_\varphi^n}{\omega_0^n})\frac{\omega_\varphi^n}{n!}$ is also bounded from below. So the coerciveness of $\mathcal{J}_\chi$ functional implies the coerciveness of K-energy. This observation appeared as Remark 2 of \cite{Chen}. Using this observation, as a corollary of Theorem 1.3 of \cite{ChenCheng2} and Theorem \ref{Main-theorem}, we can find many cscK metrics with $c_1(M)<0$.
\begin{corollary}
If $c_1(M)<0$, and $\epsilon_{1.1}>0$, then for any K\"ahler class $[\omega_0]$ such that \[\int_V ((\frac{-n[c_1(M)]\cdot[\omega_0]^{n-1}}{[\omega_0]^n}-(n-p)\epsilon_{1.1})\omega_0^p-p\omega_0^{p-1}\wedge(-c_1(M))\ge0\] for all $p$-dimensional subvarieties $V$ with $p=1,2,...,n$, there exists a cscK metric in $[\omega_0]$.
\label{cscK}
\end{corollary}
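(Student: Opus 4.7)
The plan is to reduce the corollary to a direct combination of the Main Theorem and Theorem 1.3 of \cite{ChenCheng2}, along the lines of the paragraph immediately preceding the statement. Since $c_1(M)<0$, I would first pick any K\"ahler form $\chi$ representing $-c_1(M)$; this is possible precisely because $-c_1(M)$ is a K\"ahler class. With this choice, the constant $c_0$ defined in the Main Theorem becomes
\[c_0=\frac{n\int_M\chi\wedge\omega_0^{n-1}/(n-1)!}{\int_M\omega_0^n/n!}=\frac{-n[c_1(M)]\cdot[\omega_0]^{n-1}}{[\omega_0]^n},\]
matching the constant appearing in the hypothesis. Because the integrals in the hypothesis of the corollary depend only on the cohomology classes of the forms involved, the assumption is exactly condition (7) of Theorem \ref{Main-theorem} for this particular $\chi$. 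Applying (7) $\Rightarrow$ (4) of the Main Theorem gives coerciveness of $\mathcal{J}_\chi$.

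Next, I would compare $\mathcal{J}_\chi$ with the functional $\mathcal{J}_{-\mathrm{Ric}(\omega_0)}$ that enters the K-energy formula. Because both $\chi$ and $-\mathrm{Ric}(\omega_0)$ represent the same class $-c_1(M)$, there is a smooth function $f$ with $\chi+\mathrm{Ric}(\omega_0)=\sqrt{-1}\partial\bar\partial f$. The term in the definition of $\mathcal{J}_\chi$ involving $c_0$ is unchanged by this replacement, and for the remaining term, integration by parts together with the telescoping identity
\[\sum_{k=0}^{n-1}\sqrt{-1}\partial\bar\partial\varphi\wedge\omega_0^k\wedge\omega_\varphi^{n-1-k}=\omega_\varphi^n-\omega_0^n\]
yields
\[\mathcal{J}_\chi(\varphi)-\mathcal{J}_{-\mathrm{Ric}(\omega_0)}(\varphi)=\frac{1}{n!}\int_M f(\omega_\varphi^n-\omega_0^n),\]
which is uniformly bounded since $f$ is bounded on $M$ and both top forms have the same total integral. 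Hence $\mathcal{J}_{-\mathrm{Ric}(\omega_0)}$ is also coercive with the same modulus as $\mathcal{J}_\chi$, up to adjusting the additive constant.

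Finally, since $x\log x$ is bounded from below on $(0,\infty)$, the entropy contribution $\int_M\log(\omega_\varphi^n/\omega_0^n)\omega_\varphi^n/n!$ to the K-energy is bounded below by a universal constant. Adding this to the coerciveness of $\mathcal{J}_{-\mathrm{Ric}(\omega_0)}$ established in the previous step produces coerciveness of the full K-energy $K(\varphi)$ against $\mathcal{J}_{\omega_0}(\varphi)$; by Remark \ref{J-I-equivalence} this is equivalent to coerciveness against Aubin's $J$-functional (or the $d_1$-distance after fixing the Aubin--Mabuchi normalization), which is the notion of properness used in \cite{ChenCheng2}. Theorem 1.3 of \cite{ChenCheng2} then directly yields a cscK metric in $[\omega_0]$.

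The only genuinely new computation is the simple integration-by-parts identity relating $\mathcal{J}_\chi$ and $\mathcal{J}_{-\mathrm{Ric}(\omega_0)}$; all the substantive analytic work is absorbed into Theorem \ref{Main-theorem} (for the implication from stability to coerciveness) and the Chen--Cheng theorem (for the implication from coerciveness to existence). Thus the main ``obstacle'' is ensuring that the normalizations of the various functionals, and the notions of coerciveness/properness, match precisely between the three inputs, a bookkeeping issue handled by Remark \ref{J-I-equivalence}.
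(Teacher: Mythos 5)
Your proof is correct and follows exactly the same route as the paper: pick a K\"ahler representative $\chi$ of $-c_1(M)$, identify the hypothesis with condition (7) of Theorem \ref{Main-theorem}, pass to coerciveness of $\mathcal{J}_\chi$, use the lower bound on the entropy to promote this to coerciveness of the K-energy, and invoke Theorem 1.3 of \cite{ChenCheng2}. The only extra detail you supply is the integration-by-parts computation comparing $\mathcal{J}_\chi$ with $\mathcal{J}_{-\mathrm{Ric}(\omega_0)}$, which the paper leaves implicit by pointing to Remark 2 of \cite{Chen}.
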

\begin{remark}
If there exists $\omega_\varphi\in[\omega_0]$ such that $\mathrm{Ric}(\omega_\varphi)<0$ and $\omega_\varphi$ has constant scalar curvature, then the condition above is also necessary.
\end{remark}

Besides the appearances in the moment map picture and the study of the cscK problem, J-equation also arises from the study of mirror symmetry. In fact, using the following observation of Collins-Jacob-Yau \cite{CollinsJacobYau} \[\lim_{k\rightarrow\infty}\sum_{i=1}^{n}k(\frac{\pi}{2}-\arctan(k\lambda_i))=\sum_{i=1}^{n}\frac{1}{\lambda_i},\] the J-equation is exactly the limit of the deformed Hermitian-Yang-Mills equation \[\sum_{i=1}^{n}\arctan{\lambda_i}=\text{constant},\] where $\lambda_i$ are the eigenvalues of $\omega_\varphi$ with respect to $\chi$. It plays an important role in the study of mirror symmetry \cite{StromingerYauZaslow, LeungYauZaslow}.

Motivated by the J-equation, Collins-Jacob-Yau \cite{CollinsJacobYau} conjectured that the solvability of the deformed Hermitian-Yang-Mills equation is also equivalent to a notion of stability. In this paper, we prove the uniform version of their conjecture when the angle is in $(\frac{n\pi}{2}-\frac{\pi}{4},\frac{n\pi}{2})$:

\begin{theorem}
Fix a K\"ahler manifold $M^n$ with K\"ahler metrics $\chi$ and $\omega_0$. Let $\hat\theta\in(\frac{n\pi}{2}-\frac{\pi}{4},\frac{n\pi}{2})$ be a constant. Then there exists a unique smooth function $\varphi$ up to a constant such that \[\sum_{i=1}^{n}\arctan \lambda_i=\hat\theta\] for eigenvalues $\lambda_i$ of $\omega_\varphi=\omega_0+\sqrt{-1}\partial\bar\partial\varphi>0$ with respect to $\chi$ if and only if there exists a constant $\epsilon_{1.1}>0$ and for all $p$-dimensional subvarieties $V$ with $p=1,2,...,n$ ($V$ can be chosen as $M$), there exist smooth functions $\theta_V(t)$ from $[1,\infty)$ to $[\hat\theta-\frac{(n-p)\pi}{2}+(n-p)\epsilon_{1.1},\frac{p\pi}{2})$ such that for all $t\in[1,\infty)$,
\[\int_V(\chi+\sqrt{-1}t\omega_0)^p\not=0, \theta_V(t)=\mathrm{arg}(\int_V(\chi+\sqrt{-1}t\omega_0)^p), \lim_{t\rightarrow\infty}\theta_V(t)=\frac{p\pi}{2}.\]
Moreover, when $V=M$, it is required that $\theta_M(1)=\hat\theta$.
\label{Deformed-Hermitian-Yang-Mills}
\end{theorem}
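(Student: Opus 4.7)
The plan is to mirror the structure of Theorem \ref{Main-theorem}: the ``easy'' direction, solvability implies stability, is verified by a pointwise computation at the smooth solution, while the ``hard'' direction, stability implies solvability, is attacked by a continuity method whose closedness step extracts a priori estimates from the stability hypothesis.

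For the easy direction, suppose a smooth solution $\omega_{\varphi_0}$ exists, with eigenvalues $\lambda_1,\dots,\lambda_n$ of $\omega_{\varphi_0}$ with respect to $\chi$. The hypothesis $\hat\theta>\frac{n\pi}{2}-\frac{\pi}{4}$ forces $\frac{\pi}{2}-\arctan\lambda_i<\frac{\pi}{4}$ and hence $\lambda_i>1$ pointwise. In a local frame diagonalizing $\chi$ and $\omega_{\varphi_0}$, the form $(\chi+\sqrt{-1}t\omega_{\varphi_0})^p$ splits as a sum over $p$-subsets $S$ of $\{1,\dots,n\}$ of pieces of the form $\prod_{i\in S}(1+\sqrt{-1}t\lambda_i)$, each of which has argument $\sum_{i\in S}\arctan(t\lambda_i)$. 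Monotonicity of $\arctan$ in $t\ge 1$, together with $\sum_i\arctan\lambda_i=\hat\theta$ and the strict deficit $\frac{n\pi}{2}-\hat\theta<\frac{\pi}{4}$, gives a uniform pointwise lower bound of $\hat\theta-\frac{(n-p)\pi}{2}+(n-p)\epsilon_{1.1}$ for each such argument. Since $\int_V(\chi+\sqrt{-1}t\omega_0)^p$ equals $\int_V(\chi+\sqrt{-1}t\omega_{\varphi_0})^p$ by cohomological invariance, the integral lies in a sector strictly above the required angle; sending $t\to\infty$ yields $\theta_V(t)\to\frac{p\pi}{2}$ by direct inspection, and a suitable choice of the additive constant in $\varphi_0$ enforces $\theta_M(1)=\hat\theta$.

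For the hard direction, I would set up a continuity path that connects the target dHYM to a solvable endpoint. A natural choice is to scale $\lambda_i\mapsto s\lambda_i$ and impose the equation $\sum_i\arctan(s\lambda_i)=\hat\theta_s$ with $\hat\theta_s$ chosen so that $s=1$ recovers the original dHYM; as $s\to\infty$ the observation $s(\frac{n\pi}{2}-\sum_i\arctan(s\lambda_i))\to\sum_i\frac{1}{\lambda_i}$ of Collins--Jacob--Yau degenerates the equation into the J-equation, which is solvable by Theorem \ref{Main-theorem} once one verifies that the subvariety hypothesis of Theorem \ref{Deformed-Hermitian-Yang-Mills} forces the corresponding J-stability hypothesis (7) in the $t\to\infty$ limit. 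Openness at each $s$ follows from the implicit function theorem: in the deeply supercritical regime $\arctan\lambda_i>\frac{\pi}{4}$ the linearization of $\sum_i\arctan\lambda_i$ is strictly elliptic and has trivial kernel on zero-mean functions. Closedness requires the usual chain of estimates: a $C^0$ bound from coerciveness of the dHYM Kempf--Ness type functional, a $C^2$ bound from a maximum principle on a carefully chosen auxiliary quantity exploiting the concavity in the supercritical regime, and higher regularity from Evans--Krylov and Schauder theory once the equation is uniformly elliptic.

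The main obstacle, as in Theorem \ref{Main-theorem}, is the $C^0$ estimate, which is where the full force of the stability hypothesis is needed. Concretely, one must translate the cohomological inequalities on every $p$-dimensional subvariety into an analytic coerciveness statement, bounding the dHYM functional from below by $\epsilon_{1.1}\mathcal{J}_{\omega_0}(\varphi)-C$. I expect to adapt the implication $(7)\Rightarrow(4)$ from the proof of Theorem \ref{Main-theorem}, using the subvariety inequalities not only for the top-dimensional class but for all intermediate $p$, with the parameter $t\in[1,\infty)$ in the hypothesis playing the role of interpolating between the ``J-like'' large-$t$ regime (where Theorem \ref{Main-theorem} supplies coerciveness) and the ``dHYM'' regime at $t=1$. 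The angle restriction $\hat\theta>\frac{n\pi}{2}-\frac{\pi}{4}$ is used both to secure pointwise concavity/ellipticity and to ensure that the argument of $\int_V(\chi+\sqrt{-1}t\omega_0)^p$ remains in a controlled sector throughout the continuity path.
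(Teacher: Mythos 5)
Your easy direction is in the right spirit --- the paper itself dispatches it by citing Proposition~3.1 of \cite{CollinsJacobYau} together with Remark~\ref{J-equation-implies-stability} --- although your sketch glosses over the step where you restrict to an arbitrary $p$-dimensional $V$: decomposing $(\chi+\sqrt{-1}t\omega_{\varphi_0})^p$ into $p$-subsets of eigenvalues is a statement about the top-degree form on $M$, whereas you need the pointwise argument on $T_xV$ for an arbitrary subvariety, which requires the sub-solution inequality $\sum_{i\neq j}(\tfrac{\pi}{2}-\arctan\lambda_i)<\tfrac{n\pi}{2}-\hat\theta$ applied to every codimension-one subspace, not merely coordinate subspaces.

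For the hard direction, however, your proposal departs from the paper's argument in a way that leaves a genuine gap. You want to run a continuity path that rescales $\lambda_i\mapsto s\lambda_i$ so that $s\to\infty$ degenerates dHYM into the J-equation solvable by Theorem~\ref{Main-theorem}, and to close this path using a $C^0$ bound ``from coerciveness of the dHYM Kempf--Ness type functional,'' adapted from an ``implication $(7)\Rightarrow(4)$'' in the proof of Theorem~\ref{Main-theorem}. This is circular: the paper never proves $(7)\Rightarrow(4)$ directly. The actual flow in Theorem~\ref{Main-theorem} is $(7)\Rightarrow(1)$ (via the sub-solution machinery of Theorems~\ref{Solvability-assuming-sub-solution}, \ref{Current-sub-solution}, and the induction/regularization of Section~4), and only then $(1)\Rightarrow(4)$ via Collins--Sz\'ekelyhidi. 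There is no shortcut from subvariety inequalities to analytic coerciveness to be ``adapted.'' Concretely, your plan is missing the key idea: one must first manufacture a smooth sub-solution from the stability hypothesis, because the a priori estimates (in particular the $C^2$ estimate in the style of Sz\'ekelyhidi) require a sub-solution, not coerciveness. The paper's Section~5 does this by replicating all three steps in parallel for dHYM --- Theorem~\ref{Estimate-defomed-Hermitian-Yang-Mills} solves the equation under a sub-solution assumption via three continuity paths (none of which is your scaling path, and all of which stay inside $\Gamma_\chi$ with a sub-solution in hand), Theorem~\ref{Current-sub-solution-defomed-Hermitian-Yang-Mills} produces a current-level sub-solution by the mass-concentration argument on $M\times M$ using the matrix inequality Lemma~\ref{Subadditive-submatrix-defomed-Hermitian-Yang-Mills}, and the regularization/induction argument with Lemma~\ref{Stability-on-blow-up-deformed-Hermitian-Yang-Mills} upgrades it to a smooth sub-solution. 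Your scaling path is also not obviously well-posed: it is unclear that the rescaled equation preserves the supercritical cone $\Gamma_\chi$ or that the induced stability hypothesis along the path remains uniform in $s$, and without a sub-solution the closedness step has no engine.
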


\begin{remark}
We only study the case when $\hat\theta\in(\frac{n\pi}{2}-\frac{\pi}{4},\frac{n\pi}{2})$ in this paper. So it is natural to assume that $[\omega_0]$ is a K\"ahler class. However, usually we need extra conditions in addition to $[\omega_0]$ being K\"ahler to make sure $\int_V(\chi+\sqrt{-1}t\omega_0)^p$ is not 0 so that $\theta_V(t)$ is well defined. When $p=1, 2$, $\theta_V(t)$ is always well defined and increasing for $t\in(-\infty,\infty)$ without any extra assumption. In addition, $\theta_V(0)=0$. When $p=3$, \[\int_V(\chi+\sqrt{-1}t\omega_0)^3=\int_V\chi^3-3t^2\int_V\chi\wedge\omega_0^2+\sqrt{-1}t(3\int_V\chi^2\wedge\omega_0-t^2\int_V\omega_0^3).\]
So if the inequality \[(\int_V\chi^3)(\int_V\omega_0^3)<9(\int_V\chi\wedge\omega_0^2)(\int_V\chi^2\wedge\omega_0)\] in Proposition 3.3 of \cite{CollinsXieYau} holds, then $\theta_V(t)$ is well defined for $t\in(-\infty,\infty)$. Moreover, if $\theta_V(1)>\pi$, then $\theta_V(t)$ is increasing for $t\in[1,\infty)$. In addition, $\theta_V(0)=0$. So the choice of $\theta_V(1)$ in this paper is the same as the choice of $\theta_V(1)$ in Proposition 8.4 of \cite{CollinsYau}. In higher dimensions, more inequalities are involved.
\end{remark}
\begin{remark}
Collins-Jacob-Yau conjectured that $\theta_V(1)>\hat\theta-\frac{(n-p)\pi}{2}$ for all $p=1,2,...,n-1$ is equivalent to the solvability of the deformed Hermitian-Yang-Mills equation \cite{CollinsJacobYau}. However, it seems that our uniform version is more natural because Definition 8.10 (2) of \cite{CollinsYau} also assumed the uniform positive lower bound.
\end{remark}
\begin{remark}
By Theorem 1.1 of \cite{JacobYau}, the solution to the deformed Hermitian-Yang-Mills equation is unique up to a constant if it exists. The ``only if" part of Theorem \ref{Deformed-Hermitian-Yang-Mills} is a combination of Proposition 3.1 of \cite{CollinsJacobYau} and Remark \ref{J-equation-implies-stability}. So we only need to prove the ``if" part of Theorem \ref{Deformed-Hermitian-Yang-Mills}.
\end{remark}

Theorem \ref{Deformed-Hermitian-Yang-Mills} will be proved in Section 5 using the same method of the proof of Theorem \ref{Main-theorem}.

Instead of Theorem \ref{Main-theorem}, we will prove the following stronger statement by induction:

\begin{theorem}
Fix a K\"ahler manifold $M^n$ with K\"ahler metrics $\chi$ and $\omega_0$. Let $c>0$ be a constant and $f>-\frac{1}{2n}(\frac{1}{c})^{n-1}$ be a smooth function satisfying
\[\int_M f\frac{\chi^n}{n!}=c\int_M\frac{\omega_0^n}{n!}-\int_M \chi\wedge\frac{\omega_0^{n-1}}{(n-1)!}\ge 0,\]
then there exists $\omega_\varphi=\omega_0+\sqrt{-1}\partial\bar\partial\varphi>0$ satisfying the equation
\[\mathrm{tr}_{\omega_\varphi}\chi+f\frac{\chi^n}{\omega_\varphi^n}=c\] and the inequality \[c\omega_\varphi^{n-1}-(n-1)\chi\wedge\omega_\varphi^{n-2}>0\]
if there exists $\epsilon_{1.1}>0$ such that \[\int_V (c-(n-p)\epsilon_{1.1})\omega_0^p-p\omega_0^{p-1}\wedge\chi\ge0\] for all $p$-dimensional subvarieties $V$ with $p=1,2,...,n$.
\label{Main-induction}
\end{theorem}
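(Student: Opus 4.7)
My plan is to proceed by induction on the complex dimension $n$. For $n=1$ the equation collapses to $(1+f)\chi=c\omega_\varphi$, and the integral constraint together with $f>-\tfrac12$ (the bound with $n=1$) gives the desired K\"ahler form directly.

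For the inductive step, I would run a continuity method. Parameterize a smooth family of data $(c_s,f_s)$, $s\in[0,1]$, connecting a trivial datum at $s=0$ (for instance, a twisted complex Monge-Amp\`ere problem solvable by Yau's theorem with $f_0$ chosen sufficiently large, or a datum for which $\varphi\equiv0$ is a solution) to $(c_1,f_1)=(c,f)$ at $s=1$, maintaining the integral constraint and hypothesis (7) along the path. Let $S\subset[0,1]$ be the set of $s$ for which a smooth solution $\varphi_s$ exists satisfying both the equation and the auxiliary positivity $c_s\omega_{\varphi_s}^{n-1}-(n-1)\chi\wedge\omega_{\varphi_s}^{n-2}>0$. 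For openness, I would appeal to the implicit function theorem: computing in normal coordinates where $\omega_\varphi=\delta_{ij}$ and $\chi=\lambda_i\delta_{ij}$, the linearization at a solution is
\[L\psi=-\sum_i\bigl(\lambda_i+f{\textstyle\prod_j}\lambda_j\bigr)\psi_{i\bar i}=-\sum_i\bigl(c-{\textstyle\sum_{j\neq i}}\lambda_j\bigr)\psi_{i\bar i},\]
using $c=\sum\lambda_j+f\prod\lambda_j$ to rewrite. Pointwise ellipticity of $L$ is exactly the auxiliary positivity, so on $S$ the linearization is uniformly elliptic and Fredholm of index zero modulo constants, giving openness.

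The heart of the argument is closedness, i.e.\ a priori estimates uniform in $s\in S$. Higher-order regularity follows from Evans-Krylov once a Laplacian bound is in hand, and both the Laplacian estimate and the propagation of the auxiliary positivity should follow from maximum-principle arguments that exploit the regularizing role of the $f\chi^n/\omega_\varphi^n$ term --- this is, I believe, the main reason the inductive statement is formulated with the extra $f$ rather than the bare J-equation, since the term provides pointwise lower bounds on the Monge-Amp\`ere density and thus room for the barrier. The delicate step, where both the stability hypothesis (7) and the inductive hypothesis enter essentially, is the $C^0$ estimate: (7) applied to the sublevel sets of a normalized $\varphi_s$ produces a lower bound on the relevant $\mathcal{J}_\chi$-type functional, while the inductive hypothesis, applied to subvarieties $V\subset M$ of dimension strictly less than $n$, provides solutions on $V$ whose existence controls the transverse concentration of $\omega_{\varphi_s}$ along such $V$. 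Combining these inputs via a test-function / mass-concentration argument in the spirit of Collins-Sz\'ekelyhidi should yield a uniform $L^\infty$ bound on $\varphi_s$ normalized by $\sup\varphi_s=0$. The main obstacle I anticipate is exactly this synthesis --- converting the integral, algebraic stability inequality (7), together with inductive solvability on lower-dimensional subvarieties, into an effective pluripotential bound on the potential.
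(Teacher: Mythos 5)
Your plan gets the outermost shell right (induction on dimension plus a continuity method, with Evans--Krylov for higher-order estimates, and a linearization computation that is correct and whose ellipticity is indeed the auxiliary cone condition), but the mechanism by which the stability hypothesis and the inductive hypothesis enter is wrong, and that is precisely where the paper's real content lives.

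The paper does \emph{not} derive a uniform $C^0$ bound from the numerical inequality (7) along a single continuity path. Instead it splits the problem into two layers. The inner layer (Section 2, Theorem \ref{Solvability-assuming-sub-solution}) proves solvability of the equation \emph{assuming a smooth subsolution exists}, i.e.\ assuming $c\omega_0^{n-1}-(n-1)\chi\wedge\omega_0^{n-2}>0$; here the $C^2$ estimate is a maximum-principle computation in the style of Song--Weinkove, and the $C^0$ estimate follows from that via the same Moser-type arguments --- the stability condition (7) plays no role at all in this layer. The outer layer replaces $\omega_0$ by $(1+t)\omega_0$ and asks whether $t$ can be pushed down to $0$. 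For $t>0$ the inner layer applies, and the obstruction at $t=0$ is the possible disappearance of the subsolution. The role of (7) is to \emph{produce} a subsolution at $t=0$: one runs a Demailly--Paun style mass-concentration argument on the product $M\times M$ (Theorem \ref{Current-sub-solution}), extracting a closed positive $(1,1)$-current $\omega_{1.5}$ in $[\omega_0-\epsilon_{1.4}\chi]$ satisfying the subsolution inequality in the weak sense of Definition \ref{Definition-current-sub-solution}. The inductive hypothesis then enters at a very specific place: the Lelong level set $Y=\{\nu\ge\epsilon\}$ of $\omega_{1.5}$ is a proper subvariety by Siu's theorem, and one invokes Theorem \ref{Main-induction} in lower dimension on (a desingularization of) $Y$ to manufacture a smooth subsolution in a tubular neighborhood of $Y$, which is then glued to a regularization of $\omega_{1.5}$ away from $Y$ using the B\l{}ocki--Ko\l{}odziej technique and the regularized maximum (Section 4). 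Only after this gluing does Theorem \ref{Solvability-assuming-sub-solution} give the smooth solution.

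So the genuine gap in your proposal is the claim that (7) applied to sublevel sets, plus inductive solvability on subvarieties, can be synthesized ``via a test-function / mass-concentration argument'' into a uniform $L^\infty$ bound on $\varphi_s$. No such synthesis is carried out in the paper, and it is not clear it could be: (7) is a cohomological inequality and has no direct leverage on pointwise quantities. What the mass-concentration argument actually detects (on $M\times M$, not on $M$) is a failure of subsolvability concentrated along a subvariety, and the inductive hypothesis repairs exactly that failure --- it is used to build a subsolution near $Y$, not to bound $\|\varphi_s\|_{C^0}$. Your continuity path $(c_s,f_s)$ with $\omega_0$ fixed also does not match the paper's path $\omega_0\rightsquigarrow(1+t)\omega_0$, which is what makes the Demailly--Paun argument on $M\times M$ available. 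Finally, notice that the assertion that the extra $f\chi^n/\omega_\varphi^n$ term exists mainly to regularize the Monge--Amp\`ere density is only half the story: the inductive statement must carry the extra $f$ because the mass-concentration step on $M\times M$ naturally produces a non-constant right-hand side $f_{t,\epsilon_{1.6},\epsilon_{1.7}}$ when one applies $\sqrt{-1}\partial\bar\partial$ of the regularized distance to the diagonal.
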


\begin{remark}
By Remark \ref{J-equation-implies-stability}, Theorem \ref{Main-theorem} is a corollary of Theorem \ref{Main-induction} by choosing $f=0$.
\end{remark}

\begin{remark}
When $n=1$, Theorem \ref{Main-induction} is trivial. When $n=2$, Theorem \ref{Main-induction} is the statement that the Demailly-Paun's characterization \cite{DemaillyPaun} for $[c\omega_0-\chi]$ being K\"ahler implies the solvability of the Calabi conjecture
\[(c\omega_\varphi-\chi)^2=(cf+1)\chi^2\] by Yau \cite{Yau}. In the toric case when $f$ is a non-negative constant, Theorem \ref{Main-induction} was proved by Collins and Sz\'ekelyhidi \cite{CollinsSzekelyhidi}.
\label{Base-case}
\end{remark}

There are several steps to prove Theorem \ref{Main-induction}.

Step 1: Prove the following:

\begin{theorem}
Fix a K\"ahler manifold $M^n$ with K\"ahler metrics $\chi$ and $\omega_0$. Let $c>0$ be a constant and $f>-\frac{1}{2n}(\frac{1}{c})^{n-1}$ be a smooth function satisfying
\[\int_M f\frac{\chi^n}{n!}=c\int_M\frac{\omega_0^n}{n!}-\int_M \chi\wedge\frac{\omega_0^{n-1}}{(n-1)!}\ge0,\]
then there exists $\omega_\varphi=\omega_0+\sqrt{-1}\partial\bar\partial\varphi$ satisfying the equation
\[\mathrm{tr}_{\omega_\varphi}\chi+f\frac{\chi^n}{\omega_\varphi^n}=c\] and the inequality \[c\omega_\varphi^{n-1}-(n-1)\chi\wedge\omega_\varphi^{n-2}>0\] if \[c\omega_0^{n-1}-(n-1)\chi\wedge\omega_0^{n-2}>0.\]
\label{Solvability-assuming-sub-solution}
\end{theorem}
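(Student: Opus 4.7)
I would use the continuity method. To interpolate from an equation solved by $\varphi\equiv 0$ to the target, set
\[f_0:=\bigl(c-\mathrm{tr}_{\omega_0}\chi\bigr)\frac{\omega_0^n}{\chi^n},\qquad f_t:=(1-t)f_0+tf,\]
and consider for $t\in[0,1]$ the family
\[\mathrm{tr}_{\omega_{\varphi_t}}\chi+f_t\frac{\chi^n}{\omega_{\varphi_t}^n}=c.\]
A direct calculation gives $\int f_t\chi^n=\int f\chi^n$ for every $t$, so the compatibility integral is preserved, $\varphi_0\equiv 0$ solves at $t=0$, and $t=1$ recovers the desired equation. A routine check is required that $f_t>-\frac{1}{2n}c^{-(n-1)}$ holds along the path, which should follow from the hypothesis together with the pointwise upper bound on $\mathrm{tr}_{\omega_0}\chi$ it implies.

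The crucial observation for ellipticity is that, rewriting the equation as $G(\omega_\varphi):=c\omega_\varphi^n-n\chi\wedge\omega_\varphi^{n-1}-f_t\chi^n=0$, the linearization is
\[\delta G\cdot\psi=n\bigl[c\omega_{\varphi_t}^{n-1}-(n-1)\chi\wedge\omega_{\varphi_t}^{n-2}\bigr]\wedge\sqrt{-1}\partial\bar\partial\psi=n\sigma_t\wedge\sqrt{-1}\partial\bar\partial\psi.\]
Hence uniform ellipticity of the linearized operator is exactly the strict cone condition $\sigma_t>0$; in eigenvalue coordinates, writing $\lambda_i$ for the eigenvalues of $\omega_{\varphi_t}$ with respect to $\chi$, this reads $\sum_{j\ne i}\lambda_j^{-1}<c$ for each $i$, which is the G\aa{}rding-type admissible cone of the operator. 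The hypothesis places $\omega_0$ strictly inside this cone, so openness of the solvable subset of $[0,1]$ follows from the implicit function theorem in $C^{2,\alpha}$ provided $\sigma_t>0$ is maintained along the path.

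For closedness I would establish $C^{2,\alpha}$ estimates for $\varphi_t$ that are uniform in $t$. The $C^0$ bound follows from an ABP or auxiliary Monge--Amp\`ere argument in the style of B\l ocki/Ko\l odziej after normalizing $\sup_M\varphi_t=0$; once $\Delta\varphi_t$ is controlled, higher regularity follows from Evans--Krylov (after a concave reparametrization of the operator on the admissible cone) and Schauder theory. The main obstacle, and the heart of the argument, is the $C^2$ estimate, because $\mathrm{tr}_{\omega_\varphi}\chi+f\chi^n/\omega_\varphi^n$ is not globally concave in $\omega_\varphi$ when $f>0$. I would follow the Collins--Sz\'ekelyhidi $C$-subsolution framework: the hypothesis supplies a strict $C$-subsolution, and the maximum principle applied to a test function of the form $\log\lambda_1(\omega_{\varphi_t})+Ae^{-B(\varphi_t-\inf_M\varphi_t)}$ uses the subsolution to absorb the non-concave contributions. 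The role of the lower bound $f>-\frac{1}{2n}c^{-(n-1)}$ is to confine the eigenvalues $\lambda_i(\omega_{\varphi_t})$ to a compact subset of the G\aa{}rding cone, which both justifies the passage to the limit at $t=1$ and guarantees the strict inequality $\sigma>0$ for the limiting solution claimed in the conclusion.
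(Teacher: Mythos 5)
Your single-path interpolation has a genuine gap: the endpoint function $f_0=(c-\mathrm{tr}_{\omega_0}\chi)\,\frac{\omega_0^n}{\chi^n}$ need not satisfy the constraint $f_0>-\frac{1}{2n}c^{-(n-1)}$, so the path $f_t=(1-t)f_0+tf$ can exit the admissible range where the theorem's hypotheses (and the ellipticity/non-degeneracy statement of Remark~\ref{Nondegeneracy}) apply. To see the failure, diagonalize $\omega_0=\mathrm{diag}(\lambda_i)$ against $\chi=I$ pointwise. The cone condition says $\sum_{j\ne k}\lambda_j^{-1}<c$ for all $k$, which gives the pointwise upper bound $\mathrm{tr}_{\omega_0}\chi=\sum_j\lambda_j^{-1}<\frac{nc}{n-1}$ that you invoke, hence $c-\mathrm{tr}_{\omega_0}\chi>-\frac{c}{n-1}$. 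But $\frac{\omega_0^n}{\chi^n}=\prod_j\lambda_j$ is then only bounded by $\prod_j\lambda_j<\bigl(\frac{n-1}{c}\bigr)^n$ in the regime $\mathrm{tr}_{\omega_0}\chi>c$, so all you get is
\[
f_0>-\frac{c}{n-1}\cdot\Bigl(\frac{n-1}{c}\Bigr)^{n}=-\frac{(n-1)^{n-1}}{c^{n-1}},
\]
and this lower bound is sharp (take $\lambda_i\equiv\mu$ with $\mu\uparrow\frac{n-1}{c}$). Since $(n-1)^{n-1}\geq 1>\frac{1}{2n}$ for all $n\geq2$, there exist admissible $\omega_0$ for which $f_0$ drops well below $-\frac{1}{2n}c^{-(n-1)}$ at some points of $M$, and then the openness argument (which relies on maintaining the strict cone inequality for $\omega_{\varphi_t}$, and hence on the $f_t$ lower bound via Remark~\ref{Nondegeneracy}) and the a priori estimates both break down along your path.

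The paper sidesteps exactly this obstruction by running the continuity method twice. On the first path one freezes $\omega_0$ and $f$ need not enter at all: one moves the form $\chi_t=t\chi+(1-t)\frac{c}{n}\omega_0$ from $\frac{c}{n}\omega_0$ to $\chi$, with $f_t$ the \emph{nonnegative constant} forced by the integral compatibility (it is nonnegative because the target integral is nonnegative and it starts at $0$). Since $f_t\geq0$ there is no issue with the lower bound, and the subsolution inequality $c\omega_0^{n-1}-(n-1)\chi_t\wedge\omega_0^{n-2}>0$ holds uniformly in $t$. At $t=1$ one has solved the problem with $f$ replaced by the nonnegative constant $f_1$. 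The second path then interpolates $\hat f_t=tf_1+(1-t)f$ with $\chi,\omega_0$ fixed; since $f_1\geq0$ and $f>-\frac{1}{2n}c^{-(n-1)}$, a convex combination of them stays above $-\frac{1}{2n}c^{-(n-1)}$ at every point, and the whole path is admissible. So the fix is to replace your single interpolation in $f$ with a two-stage deformation (of $\chi$ first, then of $f$). The remaining ingredients you sketch---the elliptic linearization, a Song--Weinkove/Collins--Sz\'ekelyhidi type second-order estimate exploiting the cone condition, the ABP-style $C^0$ bound, and Evans--Krylov on the cone---are broadly compatible with the paper's proof (the paper does the $C^2$ estimate by a direct Song--Weinkove computation in Proposition~\ref{C2-estimate} rather than by the $C$-subsolution test-function machinery, but this is a difference of technique rather than of substance).
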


We will use the continuity method to prove Theorem \ref{Solvability-assuming-sub-solution}. The details will be provided in Section 2.

\begin{remark}
Let $\chi=\delta_{ij}$ and $\omega_\varphi=\lambda_i\delta_{ij}$, then the equation \[\mathrm{tr}_{\omega_\varphi}\chi+f\frac{\chi^n}{\omega_\varphi^n}=c\] is equivalent to
\[\sum_{i=1}^{n}\frac{1}{\lambda_i}+\frac{f}{\prod_{i=1}^{n}\lambda_i}=c.\]
\end{remark}

\begin{remark}
Suppose $\sum_{i\not=k}\frac{1}{\lambda_i}\le c$ for all $k=1,2,...,n$ and \[\sum_{i=1}^{n}\frac{1}{\lambda_i}+\frac{f}{\prod_{i=1}^{n}\lambda_i}=c,\] then as long as $f>-\frac{1}{2n}(\frac{1}{c})^{n-1}$, it is easy to see that $\sum_{i\not=k}\frac{1}{\lambda_i}<c$.
\label{Nondegeneracy}
\end{remark}

\begin{remark}
When $n=2$, Theorem \ref{Solvability-assuming-sub-solution} is the Calabi conjecture solved by Yau \cite{Yau}. When $f=0$, Theorem \ref{Solvability-assuming-sub-solution} is a speical case of Song and Weinkove's result \cite{SongWeinkove}. When $f$ is a constant times $\frac{\omega_0^n}{\chi^n}$, Theorem \ref{Solvability-assuming-sub-solution} was proved by Zheng \cite{Zheng}.
\end{remark}

Step 2: Prove the following:
\begin{theorem}
Fix a K\"ahler manifold $M^n$ with K\"ahler metrics $\chi$ and $\omega_0$. Suppose that for all $t>0$, there exist a constant $c_t>0$ and a smooth K\"ahler form $\omega_t\in[(1+t)\omega_0]$ satisfying \[c\omega_t^{n-1}-(n-1)\chi\wedge\omega_t^{n-2}>0\] and \[\mathrm{tr}_{\omega_t}\chi+c_t\frac{\chi^n}{\omega_t^n}=c.\] Then there exist a constant $\epsilon_{1.4}>0$ and a current $\omega_{1.5}\in[\omega_0-\epsilon_{1.4}\chi]$ such that \[c\omega_{1.5}^{n-1}-(n-1)\chi\wedge\omega_{1.5}^{n-2}\ge0\] in the sense of Definition \ref{Definition-current-sub-solution}.
\label{Current-sub-solution}
\end{theorem}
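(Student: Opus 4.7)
The plan is to construct $\omega_{1.5}$ as a weak limit extracted from the family $\{\omega_t\}_{t > 0}$, after rescaling into a fixed cohomology class, and then to shift by a small multiple of $\chi$. First I would write $\omega_t = (1+t)\omega_0 + \sqrt{-1}\partial\bar\partial\psi_t$ and set $\tilde\omega_t := \omega_t/(1+t) = \omega_0 + \sqrt{-1}\partial\bar\partial\tilde\psi_t$ with $\tilde\psi_t := \psi_t/(1+t)$; after normalizing $\sup_M\tilde\psi_t = 0$, the $\tilde\psi_t$ are $\omega_0$-plurisubharmonic with uniformly bounded $L^1$ norm, so by the standard compactness of quasi-plurisubharmonic functions a subsequence $t_k \to 0^+$ satisfies $\tilde\psi_{t_k}\to\tilde\psi_\infty$ in $L^1(M)$, yielding a closed positive current $T := \omega_0 + \sqrt{-1}\partial\bar\partial\tilde\psi_\infty$ in $[\omega_0]$. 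The candidate is then $\omega_{1.5} := T - \epsilon_{1.4}\chi \in [\omega_0 - \epsilon_{1.4}\chi]$.

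To obtain uniform pointwise bounds on $\omega_t$, I would use that $\mathrm{tr}_{\omega_t}\chi + c_t\chi^n/\omega_t^n = c$ with $c_t > 0$ implies $\sum_i 1/\lambda_i \le c$, where $\lambda_i$ are the eigenvalues of $\omega_t$ with respect to $\chi$; hence $\lambda_i \ge 1/c$ for all $i$, i.e. $\omega_t \ge \chi/c$ pointwise, so $\tilde\omega_{t_k} - \epsilon_{1.4}\chi$ is a smooth positive $(1,1)$-form whenever $\epsilon_{1.4} < 1/(c(1+t_k))$. Dividing the assumed positivity $c\omega_t^{n-1} - (n-1)\chi\wedge\omega_t^{n-2} > 0$ by $(1+t)^{n-2}$ produces $c(1+t)\tilde\omega_t^{n-1} - (n-1)\chi\wedge\tilde\omega_t^{n-2} > 0$, which is the smooth quantity that I expect to survive weakly in the limit.

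The verification of $c\omega_{1.5}^{n-1} - (n-1)\chi\wedge\omega_{1.5}^{n-2} \ge 0$ in the sense of Definition~\ref{Definition-current-sub-solution} would proceed by exhibiting the smooth approximations $\omega_{1.5,k} := \tilde\omega_{t_k} - \epsilon_{1.4}\chi$, expanding
\[c\omega_{1.5,k}^{n-1}-(n-1)\chi\wedge\omega_{1.5,k}^{n-2}=\bigl(c\tilde\omega_{t_k}^{n-1}-(n-1)\chi\wedge\tilde\omega_{t_k}^{n-2}\bigr)+\epsilon_{1.4}R_k,\]
where $R_k$ collects the $\chi^j\wedge\tilde\omega_{t_k}^{n-1-j}$ terms with $j\ge 1$, and then passing to the weak limit $k\to\infty$. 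The principal obstacle is that the smooth version of this inequality is not pointwise true: the bracketed term differs from the positive quantity $c(1+t_k)\tilde\omega_{t_k}^{n-1}-(n-1)\chi\wedge\tilde\omega_{t_k}^{n-2}$ by $-ct_k\tilde\omega_{t_k}^{n-1}$, and the $\epsilon_{1.4}R_k$ contribution also has the wrong sign on its leading $\chi\wedge\tilde\omega_{t_k}^{n-2}$ term. Hence the weak interpretation in Definition~\ref{Definition-current-sub-solution} is essential, and $\epsilon_{1.4}$ must be chosen in tandem with the rate $t_k\to 0$ so that these two errors can be handled consistently in the limit; this coupling is the technical core of the proof.
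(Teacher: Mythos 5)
Your observations that $\omega_t \ge \chi/c$ (from $c_t>0$) and that the divided-through inequality $c(1+t)\tilde\omega_t^{n-1}-(n-1)\chi\wedge\tilde\omega_t^{n-2}>0$ holds pointwise are both correct, but the overall plan cannot close, and the gap you flag at the end is not a technicality you can handle by ``choosing $\epsilon_{1.4}$ in tandem with $t_k$'': it is a wrong sign in principle. Subtracting $\epsilon_{1.4}\chi$ from $\tilde\omega_{t_k}$ strictly decreases every eigenvalue of the metric while leaving $\chi$ fixed, so it makes the constraint $\sum_{i\ne k}1/\lambda_i\le c$ strictly harder to satisfy; and since the family $\omega_t$ is only assumed to be a sub-solution (with no quantitative gap), as $t\to 0$ the eigenvalues may approach the boundary $\sum_{i\ne k}1/\lambda_i=c$. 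Thus even in the ideal case where $\tilde\omega_{t_k}$ converges smoothly, the limit $T$ may sit exactly on $\partial\Gamma$, and for every $\epsilon_{1.4}>0$ the form $T-\epsilon_{1.4}\chi$ would fail the sub-solution inequality. The error term $-ct_k\tilde\omega_{t_k}^{n-1}$ that you already noticed has the same sign problem: it is a negative contribution you cannot cancel against $\epsilon_{1.4}R_k$, whose leading $-\epsilon_{1.4}(n-1)\chi\wedge\tilde\omega_{t_k}^{n-2}$ piece is also negative. There is no rate-matching that produces a strictly interior class $[\omega_0-\epsilon_{1.4}\chi]$ from a boundary-touching limit.

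The mechanism the paper uses to manufacture the definite gap $\epsilon_{1.4}>0$ is entirely different and is the actual content of the theorem: a concentration-of-mass argument modeled on Demailly--Paun's Theorem~2.12. One passes to the product $M\times M$, solves the (perturbed) equation there for the class $[\pi_1^*\omega_t+\frac1c\pi_2^*\chi]$ with an $f$-term built from $\psi_{\epsilon_{1.6}}=\log(\sum\theta_j^2\sum|g_{j,k}|^2+\epsilon_{1.6}^2)$, which forces a fixed amount of Monge--Amp\`ere mass to collapse onto the diagonal $\Delta$ as $t,\epsilon_{1.6}\to 0$: one shows $1_\Delta\Theta=\epsilon_{3.1}[\Delta]$ with $\epsilon_{3.1}>0$ for any weak limit $\Theta$ of $\omega_{t,\epsilon_{1.6},\epsilon_{1.7}}^n$. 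Pushing forward $\omega_{t,\epsilon_{1.6},\epsilon_{1.7}}^n\wedge\pi_2^*\chi$ by $\pi_1$ and using Lemma~\ref{Subadditive-submatrix} (a Schur-complement subadditivity estimate for $P_I$) gives $P_\chi$ of the pushforward at most $c$, while the mass on $\Delta$ contributes a $\chi$-current of coefficient $\frac{c^{n-1}}{\int_M n\chi^n}\epsilon_{3.1}$ that can be peeled off. That excess mass is what becomes $\epsilon_{1.4}\chi$ and moves the cohomology class strictly into the interior. Your approach never produces this excess because taking a weak limit in a fixed class $[\omega_0]$ cannot change the class. The missing idea is the product-manifold lift and the Demailly--Paun mass concentration; without it the $\epsilon_{1.4}>0$ in the conclusion is unreachable.
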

\begin{remark}
In general we can only take the wedge product of $\omega_\varphi$ when $\varphi$ is in $C^2$. Bedford-Taylor \cite{BedfordTaylor} proved that it can also be defined when $\varphi$ is in $L^\infty$. In our case, $\varphi$ is unbounded, so we have to figure out the correct definition of \[c\omega_\varphi^p-p\chi\wedge\omega_\varphi^{p-1}\ge0\] for unbounded $\varphi$ and $p=1, 2, ..., n$. This will be done in Definition \ref{Definition-current-sub-solution}.
\end{remark}
\begin{remark}
When $n=2$, it is same as Theorem 2.12 of \cite{DemaillyPaun}.
\end{remark}

Now let us sketch the proof here. It is analogous to the proof of Theorem 2.12 of Demailly-Paun's paper \cite{DemaillyPaun}. Consider the diagonal $\Delta$ inside the product manifold $M\times M$. Cover it by finitely many open coordinate balls $B_j$. Since $\Delta$ is non-singular, we can assume that on $B_j$, $g_{j,k}$, $k=1,2,...,2n$ are coordinates and $\Delta=\{g_{j,k}=0,1\le k\le n\}$. Assume that $\theta_j$ are smooth functions supported in $B_j$ such that $\sum\theta_j^2=1$ in a neighborhood of $\Delta$. For $\epsilon_{1.6}>0$, define \[\psi_{\epsilon_{1.6}}=\log(\sum_j\theta_j^2\sum_{k=1}^{n}|g_{j,k}|^2+\epsilon_{1.6}^2).\]
Define \[\chi_{M\times M}=\pi_1^*\chi+\pi_2^*\chi,\] and \[\chi_{M\times M,\epsilon_{1.6},\epsilon_{1.7}}=\chi_{M\times M}+\epsilon_{1.7}\sqrt{-1}\partial\bar\partial\psi_{\epsilon_{1.6}}.\] Let \[f_{t,\epsilon_{1.6},\epsilon_{1.7}}=\frac{\chi_{M\times M,\epsilon_{1.6},\epsilon_{1.7}}^{2n}}{\chi_{M\times M}^{2n}}-1+\frac{c_t}{c^n}>\frac{\chi_{M\times M,\epsilon_{1.6},\epsilon_{1.7}}^{2n}}{\chi_{M\times M}^{2n}}-1,\] then by Lemma 2.1 (ii) of \cite{DemaillyPaun}, there exists $\epsilon_{1.7}>0$ such that for $\epsilon_{1.6}$ small enough, \[\frac{\chi_{M\times M,\epsilon_{1.6},\epsilon_{1.7}}^{2n}}{\chi_{M\times M}^{2n}}-1>-\frac{1}{4n}(\frac{1}{(n+1)c})^{2n-1}.\] Now we consider $\omega_{0,M\times M,t}=\pi_1^*\omega_t+\frac{1}{c}\pi_2^*\chi$. By Theorem  \ref{Solvability-assuming-sub-solution}, there exists $\omega_{t,\epsilon_{1.6},\epsilon_{1.7}}\in[\omega_{0,M\times M,t}]$ such that
\[\mathrm{tr}_{\omega_{t,\epsilon_{1.6},\epsilon_{1.7}}}\chi_{M\times M}+f_{t,\epsilon_{1.6},\epsilon_{1.7}}\frac{\chi_{M\times M}^{2n}}{\omega_{t,\epsilon_{1.6},\epsilon_{1.7}}^{2n}}=(n+1)c.\]

Now define $\omega_{1,t,\epsilon_{1.6},\epsilon_{1.7}}$ by \[\omega_{1,t,\epsilon_{1.6},\epsilon_{1.7}}=\frac{c^{n-1}}{\int_M n\chi^n}(\pi_1)_*(\omega_{t,\epsilon_{1.6},\epsilon_{1.7}}^n\wedge\pi_2^*\chi).\] Fix $\epsilon_{1.7}$ and let $t$ and $\epsilon_{1.6}$ converge to 0. For small enough $\epsilon_{1.4}$, let $\omega_{1.5}$ be the weak limit of $\omega_{1,t,\epsilon_{1.6},\epsilon_{1.7}}-\epsilon_{1.4}\chi$. Then we shall expect \[c\omega_{1.5}^{n-1}-(n-1)\chi\wedge\omega_{1.5}^{n-2}\ge0\] in the sense of Definition \ref{Definition-current-sub-solution}. The details will be provided in Section 3.

Step 3:
Consider the set $I$ of $t\ge 0$ such that there exist a constant $c_t\ge 0$ and a smooth K\"ahler form $\omega_t\in[(1+t)\omega_0]$ satisfying \[(c\omega_t-(n-1)\chi)\wedge\omega_t^{n-2}>0\] and \[\mathrm{tr}_{\omega_t}\chi+c_t\frac{\chi^n}{\omega_t^n}=c.\] By Theorem \ref{Solvability-assuming-sub-solution}, it suffices to show that $0\in I$. When $t$ is large enough, the condition of Theorem \ref{Solvability-assuming-sub-solution} is satisfied. So $t\in I$. It is easy to see that if $t\in I$, then for nearby $t$, the condition of Theorem \ref{Solvability-assuming-sub-solution} is also satisfied. So $I$ is open. Still by Theorem \ref{Solvability-assuming-sub-solution}, as long as $t\in I$, then for all $t'\ge t$, $t'\in I$. Thus, in order to prove the closedness of $I$, it suffices to show that if $t\in I$ for all $t>t_0$, then $t_0\in I$. After replacing $(1+t_0)\omega_0$ by $\omega_0$, we can without loss of generality assume that $t_0=0$. In particular we can apply Theorem \ref{Current-sub-solution} to get $\omega_{1.5}$.

Let $\nu(x)$ be the Lelong number of $\omega_{1.5}$ at $x$. For $\epsilon_{1.8}>0$ to be determined, let $Y$ be the set \[Y=\{x:\nu(x)\ge\epsilon_{1.8}\}.\] By the result of Siu \cite{Siu}, $Y$ is a subvariety with dimension $p<n$. Assume that $Y$ is smooth, then by induction hypothesis, we can apply Theorem \ref{Main-induction} to $Y$ to obtain a smooth function $\varphi_{1.9}$ on $Y$ such that $\omega_{1.9}=\omega_0|_Y+\sqrt{-1}\partial\bar\partial\varphi_{1.9}$ satisfies \[(c-(n-p)\epsilon_{1.1})\omega_{1.9}^p-p\chi|_{Y}\wedge\omega_{1.9}^{p-1}\ge0\] on $Y$. Then for large enough $C_{1.10}$, \[\omega_{1.11}=\omega_0+\sqrt{-1}\partial\bar\partial (\mathrm{Proj}_Y^*\varphi_{1.9}+C_{1.10}d_\chi(.,Y)^2)\] satisfies \[(c-\frac{n-p}{2}\epsilon_{1.1})\omega_{1.11}^{n-1}-(n-1)\chi\wedge\omega_{1.11}^{n-2}>0\] on a tubular neighborhood of $Y$, where $\mathrm{Proj}_Y$ means the projection to $Y$. By a generalization of the result of B\l{}ocki and Ko\l{}odziej \cite{BlockiKolodziej}, we can glue the smoothing of $\omega_{1.5}$ outside $Y$ and $\omega_{1.11}$ near $Y$ into $\omega_{1.12}=\omega_0+\sqrt{-1}\partial\bar\partial\varphi_{1.12}$ satisfying \[c\omega_{1.12}^{n-1}-(n-1)\chi\wedge\omega_{1.12}^{n-2}>0\] on $M$. Then we are done by Theorem \ref{Solvability-assuming-sub-solution}. In general, $Y$ is singular and we need to use Hironaka's desingularization theorem to resolve it. The details will be provided in Section 4.

\section*{Acknowledgement}
The author wishes to thank Xiuxiong Chen for suggesting him this problem and providing valuable comments. The author is also grateful to Jingrui Cheng for pointing out a gap in the first version of this paper, Helmut Hofer for a discussion about symplectic geometry as well as Simone Calamai and Long Li for minor suggestions. This material is based upon
work supported by the National Science Foundation under Grant No. 1638352, as well as support from the S. S. Chern Foundation for Mathematics Research Fund.

\section{The analysis part}
In this section, we use the continuity method twice to prove Theorem \ref{Solvability-assuming-sub-solution}. First of all, for $t\in[0,1]$, define $\chi_t$ by \[\chi_t=t\chi+(1-t)\frac{c}{n}\omega_0\] and define $f_t\ge0$ as the constant such that \[\int_M f_t\frac{\chi_t^n}{n!}=c\int_M\frac{\omega_0^n}{n!}-\int_M \chi_t\wedge\frac{\omega_0^{n-1}}{(n-1)!}=t(c\int_M\frac{\omega_0^n}{n!}-\int_M \chi\wedge\frac{\omega_0^{n-1}}{(n-1)!})\ge0.\] Now we consider the set $I$ consisting of all $t\in[0,1]$ such that there exists $\omega_t=\omega_0+\sqrt{-1}\partial\bar\partial\varphi_t>0$ for smooth $\varphi_t$ satisfying \[\mathrm{tr}_{\omega_t}\chi_t+f_t\frac{\chi_t^n}{\omega_t^n}=c\] and \[c\omega_t^{n-1}-(n-1)\chi_t\wedge\omega_t^{n-2}>0.\] Then it is easy to see that $0\in I$. Remark that the equation is the same as \[c\frac{\omega_t^n}{n!}-\chi_t\wedge\frac{\omega_t^{n-1}}{(n-1)!}=f_t\frac{\chi_t^n}{n!}.\]
The linearization is \[\frac{1}{(n-1)!}(c\omega_t^{n-1}-(n-1)\chi_t\wedge\omega_t^{n-2})\wedge \sqrt{-1}\partial\bar\partial\frac{\partial\varphi_t}{\partial t}=\frac{\partial}{\partial t}(f_t\frac{\chi_t^n}{n!})+\frac{\partial\chi_t}{\partial t}\wedge\frac{\omega_t^{n-1}}{(n-1)!}.\] Assume that $t\in I$, then the left hand side is a second order elliptic equation on $\frac{\partial\varphi_t}{\partial t}$. On the other hands, the integrability condition implies that the integral of the right hand side is 0. By standard elliptic theory and the implicit function theorem, $I$ is open when we replace the smoothness assumption of $\varphi$ by $C^{100,\alpha}$. However, standard elliptic regularity theory implies that any $C^{100,\alpha}$ solution is automatically smooth. So $I$ is in fact open.

Assume that we are able to show the closedness of $I$, then we have proved Theorem \ref{Solvability-assuming-sub-solution} for $f$ replaced by $f_1$. We can use another continuity path by fixing $\chi$ and $\omega_0$ but choosing $\hat f_t=tf_1+(1-t)f$. However, it is the same as before except that $\hat f_t>-\frac{1}{2n}(\frac{1}{c})^{n-1}$ is a function instead of a constant. Thus, we only need to prove the \textit{a priori} estimate of $\omega_t$ by assuming that $f_t>-\frac{1}{2n}(\frac{1}{c})^{n-1}$ is a function. We start from the following proposition which is analogous to Lemma 3.1 of Song-Weinkove's paper \cite{SongWeinkove}:
\begin{proposition}
Assume that $t\in I$ and $\omega_t=\omega_0+\sqrt{-1}\partial\bar\partial\varphi_t$ is the corresponding solution, then there exist constants $C_{2.1}$ and $C_{2.2}$ depending only on $c$, $\omega_0$, the $C^\infty$-norm of $\chi_t$ with respect to $\omega_0$, the $C^2$-norm of $||f_t||$ with respect to $\omega_0$ such that
\[\mathrm{tr}_{\chi_t}\omega_t\le C_{2.2} e^{C_{2.1}(\varphi_t-\inf\varphi_t)}.\]
\label{C2-estimate}
\end{proposition}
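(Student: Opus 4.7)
The plan is to prove Proposition \ref{C2-estimate} by a maximum-principle argument applied to the auxiliary function
\[H = \log\mathrm{tr}_{\chi_t}\omega_t - A\varphi_t\]
for a sufficiently large constant $A>0$ chosen at the end. This is the classical Aubin--Yau strategy, adapted to the J-equation by Song--Weinkove in Lemma 3.1 of \cite{SongWeinkove}; the new feature here is the nonlinear term $f_t\chi_t^n/\omega_t^n$ on the left-hand side of the equation, whose first and second derivatives must be tracked through the computation.

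The first step is to identify the linearization $L$ of the equation $\mathrm{tr}_{\omega_t}\chi_t + f_t\chi_t^n/\omega_t^n = c$ in $\varphi_t$. In local holomorphic coordinates at a point simultaneously diagonalizing $\chi_t = I$ and $\omega_t = \mathrm{diag}(\lambda_1,\ldots,\lambda_n)$, the principal symbol of $L$ has the form
\[L u = \sum_k\Big(\frac{1}{\lambda_k^2}\Big(c - \sum_{j\ne k}\frac{1}{\lambda_j}\Big) + \frac{n\, f_t}{\lambda_k \prod_j \lambda_j}\Big) u_{k\bar k},\]
up to a common positive factor. Under the sub-solution hypothesis $c\omega_t^{n-1} - (n-1)\chi_t\wedge\omega_t^{n-2} > 0$, each factor $c - \sum_{j\ne k}1/\lambda_j$ is strictly positive, and the lower bound $f_t > -\frac{1}{2n}(1/c)^{n-1}$ together with the equation itself (see Remark \ref{Nondegeneracy}) guarantees a positive lower bound on the full coefficient of $u_{k\bar k}$. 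In particular $L$ is strictly elliptic, uniformly in $t$ along the continuity path.

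The core computation happens at an interior maximum point $x_0$ of $H$, where $LH(x_0)\le 0$. A Chern--Lu--Yau style differentiation of the equation twice in a coordinate direction yields the inequality
\[L(\log\mathrm{tr}_{\chi_t}\omega_t)(x_0) \ge -C_1\mathrm{tr}_{\omega_t}\chi_t - C_2,\]
where the gradient-squared error terms are absorbed using the critical-point condition $\partial H(x_0) = 0$ and Cauchy--Schwarz, and the constants $C_1, C_2$ depend only on the bisectional curvature of $\chi_t$, on $c$, and on the $C^2$-norm of $f_t$ with respect to $\omega_0$. Computing $L\varphi_t$ directly from $\sqrt{-1}\partial\bar\partial\varphi_t = \omega_t - \omega_0$ and the formula for $L$ gives a matching lower bound of the form $L\varphi_t(x_0) \ge \delta\mathrm{tr}_{\omega_t}\chi_t(x_0) - C_3$ for some $\delta>0$ depending only on the ambient data. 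Plugging both into $LH(x_0)\le 0$ and choosing $A > (C_1+1)/\delta$ produces an upper bound on $\mathrm{tr}_{\omega_t}\chi_t(x_0)$, and then, combined with the equation and the bounds on $f_t$, an upper bound on $\mathrm{tr}_{\chi_t}\omega_t(x_0)$ via an AM--GM step as in Yau's Monge--Amp\`ere estimate. Rearranging $H(y) \le H(x_0)$ for an arbitrary $y$ then yields the exponential bound of the statement.

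The main obstacle is the bookkeeping of the nonlinear correction $f_t\chi_t^n/\omega_t^n$: its first and second derivatives introduce new error terms in $L(\log\mathrm{tr}_{\chi_t}\omega_t)$ proportional to derivatives of $f_t$ and to $\omega_t^n/\chi_t^n$. The quantitative lower bound $f_t > -\frac{1}{2n}(1/c)^{n-1}$ is exactly what is needed to keep the operator $L$ uniformly elliptic throughout the continuity path and to ensure that these error terms are absorbable by the positive term produced by $-AL\varphi_t$.
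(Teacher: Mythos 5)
Your overall strategy --- apply the maximum principle to $H = \log\mathrm{tr}_{\chi_t}\omega_t - A\varphi_t$, compute the linearized operator $\tilde\Delta$ of the equation, show a lower bound $\tilde\Delta(\log\mathrm{tr}_{\chi_t}\omega_t)\ge -C$, and then use the critical point condition together with the extra ellipticity coming from $f_t > -\frac{1}{2n}(1/c)^{n-1}$ --- is exactly the strategy of the paper, which in turn follows Lemma 3.1 of Song--Weinkove. You also correctly identify that the new bookkeeping comes from tracking derivatives of the $f_t\chi_t^n/\omega_t^n$ term. Up to that point the proposal is on the right track.

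The gap is in the final step, where you convert the maximum principle information into a bound on $\mathrm{tr}_{\chi_t}\omega_t$. You claim a bound of the form $L\varphi_t(x_0)\ge\delta\,\mathrm{tr}_{\omega_t}\chi_t(x_0)-C_3$ and then say ``an upper bound on $\mathrm{tr}_{\omega_t}\chi_t(x_0)$, and then \ldots an upper bound on $\mathrm{tr}_{\chi_t}\omega_t(x_0)$ via an AM--GM step as in Yau's Monge--Amp\`ere estimate.'' This does not close. First, $\mathrm{tr}_{\omega_t}\chi_t=\sum_i 1/\lambda_i$ is \emph{already} bounded a priori (indeed $<nc$, since every $\lambda_i>1/c$ under the subsolution inequality), so a lower bound for $L\varphi_t$ proportional to $\mathrm{tr}_{\omega_t}\chi_t$ carries no information. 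Second, and more fundamentally, Yau's AM--GM step requires an a priori bound on the volume ratio $\omega_t^n/\chi_t^n=\prod_i\lambda_i$, and for the J-equation there is no such bound: $\sum 1/\lambda_i\le nc$ only forces $\prod\lambda_i\ge(n/c)^n$ from below, while from above the product can blow up (one eigenvalue can diverge while the others stay near $1/c$). So ``AM--GM as in Yau'' is not available. What the paper actually does at the maximum point of $H$ is a two-case analysis: if $c-\sum g^{i\bar j}g^{k\bar l}\chi_{k\bar j}(2g_{i\bar l}-g^0_{i\bar l})<\epsilon_{2.10}$, it invokes Song--Weinkove's Lemma 3.1 argument (which bounds $\mathrm{tr}_{\chi_t}\omega_t$ \emph{directly}, without passing through the volume ratio); otherwise, the inequality $-\tilde\Delta\varphi_t<\epsilon_{2.10}/2$ together with the equation forces $-f_t\sum g^{i\bar l}\frac{\det\chi}{\det g}(g_{i\bar l}-g^0_{i\bar l})$ to be negative enough that $\prod_i\lambda_i$ is bounded, and \emph{only then} is $\mathrm{tr}_{\chi_t}\omega_t$ bounded using $\lambda_i>1/c$. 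The second case is precisely where the $f_t$ term saves the day; your proposal does not contain this argument.

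A minor additional point: your formula for $L$ mixes two inequivalent normalizations of the linearization. The operator used in the paper is $\tilde\Delta u=\sum_i\left(\frac{1}{\lambda_i^2}+\frac{f_t}{\lambda_i\prod_\alpha\lambda_\alpha}\right)u_{,i\bar i}$ at a $\chi_t$-normal point; the ``$c-\sum_{j\ne k}1/\lambda_j$'' form arises if you instead linearize the wedge-product form of the equation, and the two agree at a solution only after multiplication by $\prod\lambda_i$ (which is not a uniform factor). The factor $n$ on your $f_t$ term is also spurious. These slips do not affect ellipticity but would propagate into the later computation.
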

\begin{proof}
In local coordinates, $\chi_t=\sqrt{-1}\chi_{i\bar j}dz^i\wedge dz^{\bar j}$ and $\omega_t=\sqrt{-1} g_{i\bar j}dz^i\wedge dz^{\bar j}$. Fix any point $x$, choose a $\chi_t$-normal coordinate such that $\chi_{i\bar j}=\delta_{i\bar j}$, $\chi_{i\bar j,k}=\chi_{i\bar j,\bar k}=0$ and $g_{i\bar j}=\lambda_i\delta_{i\bar j}$ at $x$, where the derivatives are all ordinary derivatives. Then the equation is
\[\sum_{i,j}g^{i\bar j}\chi_{i\bar j}+f_t\frac{\det\chi_{\alpha\bar\beta}}{\det g_{\alpha\bar\beta}}=c.\]
Define an operator $\tilde\Delta$ by
\[\tilde\Delta u=\sum_{i,l}(\sum_{j,k}g^{i\bar j}g^{k\bar l}\chi_{k\bar j}+f_t\frac{g^{i\bar l}\det\chi_{\alpha\bar\beta}}{\det g_{\alpha\bar\beta}})u_{,i\bar l},\]
then it is easy to see that $\tilde\Delta$ is independent of the choice of local coordinates.

At $x$, \[\tilde\Delta u=\sum_i(\frac{1}{\lambda_i^2}+f_t\frac{1}{\lambda_i}\frac{1}{\prod_{\alpha=1}^{n}\lambda_\alpha})u_{,i\bar i}.\] Since $\frac{1}{\lambda_\alpha}<c$ and $f_t>-\frac{1}{2n}(\frac{1}{c})^{n-1}$, it is easy to see that \[\frac{1}{\lambda_i^2}+f_t\frac{1}{\lambda_i}\frac{1}{\prod_{\alpha=1}^{n}\lambda_\alpha}>0\] for all $i$. So $\tilde\Delta$ is a second order elliptic operator.

Now we compute $\tilde\Delta(\log\mathrm{tr}_{\chi_t}\omega_t)=\tilde\Delta(\log(\sum_{i,j}\chi^{i\bar j}g_{i\bar j}))$. It equals to
\[\sum_k(\frac{\sum_i (g_{i\bar i,k\bar k}+(\chi^{i\bar i})_{,k\bar k}\lambda_i)}{\sum_i\lambda_i}-\frac{|\sum_ig_{i\bar i,k}|^2}{(\sum_i\lambda_i)^2})(\frac{1}{\lambda_k^2}+f_t\frac{1}{\lambda_k}\frac{1}{\prod_{\alpha=1}^{n}\lambda_\alpha}))\] at $x$.

Now we differentiate the equation \[\sum_{i,j}g^{i\bar j}\chi_{i\bar j}+f_t\frac{\det\chi_{\alpha\bar\beta}}{\det g_{\alpha\bar\beta}}=c,\] then \[\begin{split}\sum_{i,j}g^{i\bar j}\chi_{i\bar j,k}-\sum_{i,j,a,b}g^{i\bar b}g_{a\bar b,k}g^{a\bar j}\chi_{i\bar j}+\frac{\det\chi_{\alpha\bar\beta}}{\det g_{\alpha\bar\beta}}(f_{t,k}\\+f_t\sum_{i,j}(\chi^{i\bar j}\chi_{i\bar j,k}-g^{i\bar j}g_{i\bar j,k}))=0.\end{split}\]
So \[\begin{split}\frac{1}{\sum_i\lambda_i}\sum_{k}(\sum_i\frac{1}{\lambda_i}\chi_{i\bar i,k\bar k}-\sum_{i}\frac{1}{\lambda_i^2}g_{i\bar i,k\bar k}+\sum_{i,j}\frac{1}{\lambda_i^2}\frac{1}{\lambda_j}(|g_{i\bar j,k}|^2+|g_{i\bar j,\bar k}|^2)\\+\frac{1}{\prod_{\alpha}\lambda_\alpha}(f_t(|\sum_{i}(\frac{1}{\lambda_i}g_{i\bar i,k})|^2+\sum_i\chi_{i\bar i,k\bar k}+\sum_{i,j}\frac{1}{\lambda_i\lambda_j}|g_{i\bar j,k}|^2-\sum_{i}\frac{1}{\lambda_i}g_{i\bar i,k\bar k})\\+f_{t,k\bar k}-f_{t,\bar k}\sum_{i}(\frac{1}{\lambda_i}g_{i\bar i,k})-f_{t,k}\sum_{i}(\frac{1}{\lambda_i}g_{i\bar i,\bar k})))=0\end{split}\] at $x$.

By K\"ahler condition, $g_{i\bar i,k\bar k}=g_{k\bar k,i\bar i}$, $g_{i\bar j,k}=g_{k\bar j,i}$ and $g_{i\bar j,\bar k}=g_{i\bar k,\bar j}$. Using the bounds that $|\chi_{i\bar i,k\bar k}|+|(\chi^{i\bar i})_{,k\bar k}|+|f_{t,k}|+|f_{t,\bar k}|+|f_{t,k\bar k}|+|f_t|+\frac{1}{\lambda_i}<C_{2.3}$ for all $i$, $k$, it is easy to see that by taking the sum of the previous two equations,
\[\begin{split}\tilde\Delta(\log\mathrm{tr}_\chi\omega_t)\ge-C_{2.4}-\sum_k(\frac{|\sum_ig_{i\bar i,k}|^2}{(\sum_i\lambda_i)^2})(\frac{1}{\lambda_k^2}+f_t\frac{1}{\lambda_k}\frac{1}{\prod_{\alpha=1}^{n}\lambda_\alpha})\\+\frac{1}{\sum_i\lambda_i}\sum_{k}(\sum_{i,j}\frac{1}{\lambda_i^2}\frac{1}{\lambda_j}(|g_{i\bar j,k}|^2+|g_{i\bar j,\bar k}|^2)+\frac{1}{\prod_{\alpha}\lambda_\alpha}(f_t(|\sum_{i}(\frac{1}{\lambda_i}g_{i\bar i,k})|^2\\+\sum_{i,j}\frac{1}{\lambda_i\lambda_j}|g_{i\bar j,k}|^2)-f_{t,\bar k}\sum_{i}(\frac{1}{\lambda_i}g_{i\bar i,k})-f_{t,k}\sum_{i}(\frac{1}{\lambda_i}g_{i\bar i,\bar k}))).\end{split}\]
Remark that \[\begin{split}|\frac{1}{\prod_{\alpha}\lambda_\alpha}f_{t,\bar k}\sum_{i}(\frac{1}{\lambda_i}g_{i\bar i,k})|=|\frac{1}{\prod_{\alpha}\lambda_\alpha}f_{t,k}\sum_{i}(\frac{1}{\lambda_i}g_{i\bar i,\bar k})|\\=|f_{t,\bar k}||\sum_{i}\frac{1}{\prod_{\alpha}\lambda_\alpha}(\frac{1}{\lambda_i}g_{i\bar i,k})|\le C_{2.5}\sum_{i}|\frac{1}{\lambda_i^2}g_{i\bar i,k}|\\\le \frac{1}{4}\sum_{i}\frac{1}{\lambda_i^3}|g_{i\bar i,k}|^2+C_{2.6}\sum_{i}\frac{1}{\lambda_i}\le\frac{1}{4}\sum_{i}\frac{1}{\lambda_i^3}|g_{i\bar i,\bar k}|^2+C_{2.7}\end{split}\]
and \[\begin{split}-\frac{f_t}{\prod_{\alpha}\lambda_\alpha}|\sum_{i}(\frac{1}{\lambda_i}g_{i\bar i,k})|^2\le -\frac{nf_t}{\prod_{\alpha}\lambda_\alpha}\sum_{i}\frac{1}{\lambda_i^2}|g_{i\bar i,k}|^2\le \frac{1}{2}\sum_{i}\frac{1}{\lambda_i^3}|g_{i\bar i,\bar k}|^2.\end{split}\]
So \[\begin{split}\tilde\Delta(\log\mathrm{tr}_{\chi_t}\omega_t)\ge-C_{2.8}-\sum_k(\frac{|\sum_ig_{i\bar i,k}|^2}{(\sum_i\lambda_i)^2})(\frac{1}{\lambda_k^2}+f_t\frac{1}{\lambda_k}\frac{1}{\prod_{\alpha=1}^{n}\lambda_\alpha})\\+\frac{1}{\sum_i\lambda_i}\sum_{k}(\sum_{i,j}\frac{1}{\lambda_i^2}\frac{1}{\lambda_j}|g_{i\bar j,k}|^2+\frac{f_t}{\prod_{\alpha}\lambda_\alpha}\sum_{i,j}\frac{1}{\lambda_i\lambda_j}|g_{i\bar j,k}|^2).\end{split}\]
We have used \[\sum_{i,j}\frac{1}{\lambda_i^2}\frac{1}{\lambda_j}|g_{i\bar j,\bar k}|^2\ge\sum_{i}\frac{1}{\lambda_i^3}|g_{i\bar i,\bar k}|^2\] here.

By Cauchy-Schwarz inequality and the fact that $g_{i\bar i,k}=g_{k\bar i,i}$,
\[\begin{split}&\sum_k(|\sum_ig_{i\bar i,k}|^2)(\frac{1}{\lambda_k^2}+f_t\frac{1}{\lambda_k}\frac{1}{\prod_{\alpha=1}^{n}\lambda_\alpha})\\
&\le\sum_{i,j,k}|g_{i\bar i,k}||g_{j\bar j,k}|(\frac{1}{\lambda_k^2}+f_t\frac{1}{\lambda_k}\frac{1}{\prod_{\alpha=1}^{n}\lambda_\alpha})\\
&\le\sum_{i,j}\sqrt{\sum_k|g_{i\bar i,k}|^2(\frac{1}{\lambda_k^2}+f_t\frac{1}{\lambda_k}\frac{1}{\prod_{\alpha=1}^{n}\lambda_\alpha})}\sqrt{\sum_k|g_{j\bar j,k}|^2(\frac{1}{\lambda_k^2}+f_t\frac{1}{\lambda_k}\frac{1}{\prod_{\alpha=1}^{n}\lambda_\alpha})}\\
&=(\sum_{i}\sqrt{\sum_k|g_{i\bar i,k}|^2(\frac{1}{\lambda_k^2}+f_t\frac{1}{\lambda_k}\frac{1}{\prod_{\alpha=1}^{n}\lambda_\alpha})})^2\\
&\le(\sum_{i}\lambda_i)\sum_{i}\sum_k\frac{|g_{i\bar i,k}|^2}{\lambda_i}(\frac{1}{\lambda_k^2}+f_t\frac{1}{\lambda_k}\frac{1}{\prod_{\alpha=1}^{n}\lambda_\alpha})\\
&=(\sum_{i}\lambda_i)\sum_{i,k}\frac{|g_{i\bar k,k}|^2}{\lambda_k}(\frac{1}{\lambda_i^2}+f_t\frac{1}{\lambda_i}\frac{1}{\prod_{\alpha=1}^{n}\lambda_\alpha})\\
&\le(\sum_{i}\lambda_i)\sum_{i,j,k}\frac{|g_{i\bar j,k}|^2}{\lambda_j}(\frac{1}{\lambda_i^2}+f_t\frac{1}{\lambda_i}\frac{1}{\prod_{\alpha=1}^{n}\lambda_\alpha}),
\end{split}\]
so $\tilde\Delta(\log\mathrm{tr}_{\chi_t}\omega_t)\ge-C_{2.9}$ at $x$. However, since $x$ is arbitrary and $\tilde\Delta$ is independent of the local coordinates, we see that $\tilde\Delta(\log\mathrm{tr}_{\chi_t}\omega_t)\ge-C_{2.9}$ on $M$.

Choose $\epsilon_{2.10}<\frac{c}{2n}$ as a small constant such that \[c\omega_0^{n-1}-(n-1)\chi\wedge\omega_0^{n-2}>2\epsilon_{2.10}\omega_0^{n-1},\]
then
\[c\omega_0^{n-1}-(n-1)\chi_t\wedge\omega_0^{n-2}>2\epsilon_{2.10}\omega_0^{n-1}\] by the definition of $\chi_t$. Choose $C_{2.1}$ as $\frac{2C_{2.9}}{\epsilon_{2.10}}$, then at the maximal point of $\log\mathrm{tr}_{\chi_t}\omega_t-C_{2.1}\varphi_t$, \[-\tilde\Delta\varphi_t=-\sum_{i,l}(\sum_{j,k}g^{i\bar j}g^{k\bar l}\chi_{k\bar j}+f_t\frac{g^{i\bar l}\det\chi_{\alpha\bar\beta}}{\det g_{\alpha\bar\beta}})(g_{i\bar l}-g^0_{i\bar l})<\frac{\epsilon_{2.10}}{2}.\]
If \[c-\sum_{i,l}\sum_{j,k}g^{i\bar j}g^{k\bar l}\chi_{k\bar j}(2g_{i\bar l}-g^0_{i\bar l})<\epsilon_{2.10},\]
by the proof of Lemma 3.1 of \cite{SongWeinkove}, $\mathrm{tr}_{\chi_t}\omega_t\le C_{2.11}$. If \[c-\sum_{i,l}\sum_{j,k}g^{i\bar j}g^{k\bar l}\chi_{k\bar j}(2g_{i\bar l}-g^0_{i\bar l})\ge\epsilon_{2.10},\] then \[-\sum_{i,l}f_t\frac{g^{i\bar l}\det\chi_{\alpha\bar\beta}}{\det g_{\alpha\bar\beta}}(g_{i\bar l}-g^0_{i\bar l})<-\frac{\epsilon_{2.10}}{2}+c-\sum_{i,j}g^{i\bar j}\chi_{i\bar j}=-\frac{\epsilon_{2.10}}{2}+f_t\frac{\det\chi_{\alpha\bar\beta}}{\det g_{\alpha\bar\beta}},\]
so $\prod_{i}\lambda_i=\frac{\det g_{\alpha\bar\beta}}{\det\chi_{\alpha\bar\beta}}<C_{2.12}$. Using the fact that $\lambda_i>\frac{1}{c}$, $\mathrm{tr}_{\chi_t}\omega_t=\sum_i\lambda_i\le C_{2.13}$ is also true. This completes the proof of the proposition.
\end{proof}

By adding a constant if necessary, we can without loss of generality assume that $\sup_M\varphi_t=0$. Then we have the following $C^0$ estimate:
\begin{proposition}
\[||\varphi_t||_{C^0}\le C_{2.14}.\] Moreover, $C_{2.15}^{-1}\chi_t\le\omega_t\le C_{2.15}\chi_t$.
\label{C0-estimate}
\end{proposition}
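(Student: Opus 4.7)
The proof splits into three steps that exploit the bounds already in hand.

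\textbf{Step 1 (pointwise lower bound $\omega_t\ge c^{-1}\chi_t$).} By the definition of the continuity set $I$, the subsolution condition $c\omega_t^{n-1}-(n-1)\chi_t\wedge\omega_t^{n-2}>0$ holds. Choosing at each point a $\chi_t$-orthonormal frame diagonalizing $\omega_t$ with eigenvalues $\lambda_i$, the algebraic computation in Remark~\ref{J-equation-implies-stability} shows this inequality is equivalent to $\sum_{\alpha\ne k}\tfrac{1}{\lambda_\alpha}<c$ for every $k$, and in particular $\tfrac{1}{\lambda_i}<c$. This gives $\omega_t\ge c^{-1}\chi_t$, i.e. one direction of $C_{2.15}^{-1}\chi_t\le\omega_t\le C_{2.15}\chi_t$.

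\textbf{Step 2 (uniform $L^\infty$ bound on $\varphi_t$).} I would proceed by a Moser iteration in the spirit of Yau's $C^0$ estimate for Monge--Amp\`ere. Two facts set the iteration up. First, the operator $\tilde\Delta$ of Proposition~\ref{C2-estimate} is uniformly elliptic on $I$: its coefficients satisfy $\frac{1}{\lambda_i^2}+\frac{f_t}{\lambda_i\prod_\alpha\lambda_\alpha}=\frac{1}{\lambda_i}(c-\sum_{\alpha\ne i}\tfrac{1}{\lambda_\alpha})$, which is strictly positive by Step~1 and bounded above by $c^2$. Second, Step~1 gives a uniform volume lower bound $\omega_t^n\ge c^{-n}\chi_t^n\ge C^{-1}\omega_0^n$, so combined with $\int_M\omega_t^n=\int_M\omega_0^n$ the volume form is comparable to $\omega_0^n$ in an integral sense. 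A direct expansion of $\tilde\Delta\varphi_t$ using $\lambda_i>1/c$ and the bounds on $g^0_{i\bar j}$ shows $|\tilde\Delta\varphi_t|\le C$ pointwise. Testing the equation against $(-\varphi_t)^p$, integrating by parts, and applying the Sobolev inequality on $(M,\omega_0)$ in the standard way yields the recursive estimate $\|{-\varphi_t}\|_{L^{p+1}(\omega_0^n)}\le (Cp)^{1/p}\|{-\varphi_t}\|_{L^p(\omega_0^n)}$, which iterates (starting from the automatic $L^1$ bound for $\omega_0$-psh functions normalized by $\sup\varphi_t=0$) to a uniform $L^\infty$ bound $\|\varphi_t\|_{C^0}\le C_{2.14}$.

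\textbf{Step 3 (upper bound $\omega_t\le C\chi_t$).} Given the $C^0$ bound, we have $\varphi_t-\inf\varphi_t\le 2C_{2.14}$. Substituting this into Proposition~\ref{C2-estimate} gives $\mathrm{tr}_{\chi_t}\omega_t\le C_{2.2}\exp(2C_{2.1}C_{2.14})$, which combined with the eigenvalue lower bound $\lambda_i>1/c$ from Step~1 produces both sides of $C_{2.15}^{-1}\chi_t\le\omega_t\le C_{2.15}\chi_t$.

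\textbf{Main obstacle.} Step~2 is the central difficulty. Unlike the Monge--Amp\`ere case, our equation does \emph{not} yield a pointwise upper bound on $\omega_t^n/\chi_t^n$, so one cannot apply Kolodziej's $L^\infty$ theorem to a Monge--Amp\`ere reformulation; the Moser iteration must instead be driven through the non-standard operator $\tilde\Delta$. The subsolution condition plays a dual role here: it guarantees both the ellipticity of $\tilde\Delta$ and the lower volume bound required to close the iteration.
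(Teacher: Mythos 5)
The paper's proof is a one-line citation to Song--Weinkove's Lemmas 3.3, 3.4 and Proposition 3.5, observing that those arguments use only the exponential second-order inequality of Proposition~\ref{C2-estimate}. Your Steps~1 and~3 are correct and essentially agree with what one needs: the subsolution condition in the definition of $I$ gives $\lambda_i>1/c$, hence $\omega_t\ge c^{-1}\chi_t$, and once the $C^0$ bound is in hand, Proposition~\ref{C2-estimate} immediately gives the matching upper bound. The genuine difficulty is your Step~2, and it contains a gap.

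The claim that $\tilde\Delta$ is uniformly elliptic on $I$ is not justified. You correctly compute the coefficient as $\frac{1}{\lambda_i}\bigl(c-\sum_{\alpha\ne i}\frac{1}{\lambda_\alpha}\bigr)$, which is strictly positive and bounded above, but Step~1 only supplies a \emph{lower} bound $\lambda_i>1/c$; it does not rule out $\lambda_i\to\infty$ or $\sum_{\alpha\ne i}\frac{1}{\lambda_\alpha}\to c$, both of which make the coefficient degenerate to zero. A uniform lower bound on the ellipticity constants is precisely what one eventually gets \emph{from} the $C^0$ estimate (via Proposition~\ref{C2-estimate} and Remark~\ref{Nondegeneracy}), so the argument as written is circular. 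Without uniform ellipticity neither Moser iteration nor ABP applies to $\tilde\Delta$. A second issue is that $\tilde\Delta$ is a non-divergence-form operator, so ``testing against $(-\varphi_t)^p$ and integrating by parts'' does not produce the gradient term needed to close the iteration; in Yau's argument this step hinges on the identity $\omega_\varphi^n-\omega_0^n=\sqrt{-1}\partial\bar\partial\varphi\wedge\sum_k\omega_\varphi^k\wedge\omega_0^{n-1-k}$, and you do not supply a substitute for it here.

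This is exactly the obstacle that Song--Weinkove's proof is designed to bypass. Their route does not run a Moser iteration through the linearized operator at all; instead it exploits the exponential inequality $\mathrm{tr}_{\chi_t}\omega_t\le C e^{A(\varphi_t-\inf\varphi_t)}$ directly, converting it into integral control of sublevel sets of $\varphi_t$ and then closing with a delicate elementary argument (their Lemmas 3.3--3.4 and Proposition 3.5). So while your overall decomposition into three steps is sound, the middle step needs to be replaced by (or reduced to) that mechanism rather than a Moser-type scheme on $\tilde\Delta$.
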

\begin{proof}
Lemma 3.3 and Lemma 3.4 and Proposition 3.5 of \cite{SongWeinkove} only used the inequality in Proposition \ref{C2-estimate}. So they are still true in our case.
\end{proof}

\begin{proposition}
$I$ is closed.
\label{Closedness}
\end{proposition}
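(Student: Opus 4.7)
The plan is to take a sequence $\{t_k\}\subset I$ with $t_k\to t_\infty\in[0,1]$, extract a smooth limit, and verify that the limiting potential satisfies both the equation and the strict cone condition at $t_\infty$. Normalize the potentials by $\sup_M\varphi_{t_k}=0$. Proposition \ref{C0-estimate} then gives $\|\varphi_{t_k}\|_{C^0}\le C_{2.14}$ together with the two-sided bound $C_{2.15}^{-1}\chi_{t_k}\le\omega_{t_k}\le C_{2.15}\chi_{t_k}$, uniformly in $k$. In particular, $\varphi_{t_k}$ is uniformly $C^{1,1}$, and the equation is uniformly elliptic along the sequence.

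The second step is to promote uniform ellipticity to uniform $C^{k,\alpha}$ estimates. Written in terms of the eigenvalues $\lambda_i$ of $\omega_{t_k}$ with respect to $\chi_{t_k}$, the equation reads $\sum_i\lambda_i^{-1}+f_{t_k}\prod_i\lambda_i^{-1}=c$ on the open cone $\{\lambda_i>0\}\cap\{\sum_{i\ne j}\lambda_i^{-1}<c \text{ for all } j\}$, where the operator is genuinely convex when $f_{t_k}\ge 0$ and remains a perturbation of a strongly convex operator throughout the admissible range $f_{t_k}>-\frac{1}{2n}(\frac{1}{c})^{n-1}$. I would invoke the complex Evans-Krylov theorem (or an extension such as that of Tosatti-Wang-Weinkove-Yang) to extract a uniform $C^{2,\alpha}$ bound, and then bootstrap via Schauder theory on a once-differentiated equation to obtain uniform $C^{k,\alpha}$ bounds for every $k$. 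Passing to a subsequence, $\varphi_{t_k}\to\varphi_{t_\infty}$ smoothly, so $\omega_{t_\infty}=\omega_0+\sqrt{-1}\partial\bar\partial\varphi_{t_\infty}>0$ solves the equation at $t_\infty$.

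The final step is to upgrade the weak cone condition to strict. Smooth convergence only preserves the limit $c\omega_{t_\infty}^{n-1}-(n-1)\chi_{t_\infty}\wedge\omega_{t_\infty}^{n-2}\ge 0$, i.e., $\sum_{i\ne j}\lambda_i^{-1}(x)\le c$ pointwise for every $j$. However, because $f_{t_\infty}>-\frac{1}{2n}(\frac{1}{c})^{n-1}$, Remark \ref{Nondegeneracy} applied pointwise directly turns this into the strict inequality $\sum_{i\ne j}\lambda_i^{-1}(x)<c$, so that $c\omega_{t_\infty}^{n-1}-(n-1)\chi_{t_\infty}\wedge\omega_{t_\infty}^{n-2}>0$ on $M$, giving $t_\infty\in I$.

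The main obstacle I anticipate is the $C^{2,\alpha}$ estimate, since the operator is genuinely convex in the Hessian only when $f_t\ge 0$ and only quantitatively pseudoconvex (with the margin controlled by $|f_t|$) in general. One either has to treat the $f_t$-term as a controlled perturbation of the convex Song-Weinkove operator $\mathrm{tr}_{\omega}\chi$, or verify directly that the structural hypotheses for a complex Evans-Krylov type theorem hold on the cone cut out by the strict cone condition. Once this analytic input is in place, the compactness argument combined with the pointwise non-degeneracy in Remark \ref{Nondegeneracy} makes the closedness of $I$ automatic.
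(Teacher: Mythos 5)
Your overall architecture is the same as the paper's: use Propositions \ref{C2-estimate} and \ref{C0-estimate} for uniform $C^0$ and two-sided Laplacian bounds, invoke Evans--Krylov for a uniform $C^{2,\alpha}$ estimate, bootstrap via Schauder theory, extract a smooth limit along a subsequence, and promote the closed cone condition to the open one by the pointwise dichotomy of Remark \ref{Nondegeneracy}. However, the one step you flag as ``the main obstacle I anticipate'' --- verifying that the operator satisfies the concavity hypothesis of Evans--Krylov in the regime $-\frac{1}{2n}(\frac{1}{c})^{n-1}<f_t<0$ --- is exactly the step the paper carries out and your write-up leaves open. Since Theorem \ref{Main-induction} genuinely needs this negative range of $f$, as submitted this is a gap and not a stylistic difference.

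The verification is concrete but not automatic. The paper differentiates $-g^{i\bar j}\chi_{i\bar j}-f_t\frac{\det\chi}{\det g}=-c$ twice in $g$ and, after diagonalizing at a point, finds that the Hessian applied to a Hermitian $w$ gives
\[
-\sum_{a,b}\Bigl(\frac{1}{\lambda_a^2\lambda_b}+\frac{1}{\lambda_a\lambda_b^2}\Bigr)|w_{a\bar b}|^2-\frac{f_t}{\prod_i\lambda_i}\Bigl|\sum_a\frac{w_{a\bar a}}{\lambda_a}\Bigr|^2-\frac{f_t}{\prod_i\lambda_i}\sum_{a,b}\frac{|w_{a\bar b}|^2}{\lambda_a\lambda_b}.
\]
When $f_t<0$ the last two terms have the wrong sign, so non-positivity is not structural. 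What saves it is the cone condition: $\sum_{i\ne j}\lambda_i^{-1}<c$ for every $j$ forces $\lambda_i>1/c$, hence $\prod_{i\ne a}\lambda_i>(1/c)^{n-1}$; together with $|f_t|<\frac{1}{2n}(1/c)^{n-1}$ and Cauchy--Schwarz on the trace-squared term, the first block dominates term by term and the form is $\le 0$. So concavity holds, but only on the sub-cone carved out by the J-inequality and only because the admissible negative range of $f$ was chosen narrow enough. That quantitative check is precisely what your proposal defers. A cautionary word on your suggested alternative: Evans--Krylov is not stable under $C^0$ perturbations of the operator that destroy concavity, so ``treating the $f_t$-term as a controlled perturbation of the Song--Weinkove operator'' is not a shortcut; one must either do the direct Hessian computation above (what the paper does) or verify the weaker structural conditions of Sz\'ekelyhidi's framework (which the paper in fact falls back on in Section 5, where the dHYM operator is concave only on its zero level set).
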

\begin{proof}
First of all, we want to check the uniform ellipticity and concavity for the Evans-Krylov estimate. The equation is \[-g^{i\bar j}\chi_{i\bar j}-f_t\frac{\det\chi_{\alpha\bar\beta}}{\det g_{\alpha\bar\beta}}=-c.\]
View it as a function in terms of $g_{i\bar j}$, $\chi_{i\bar j}$ and $f_t$, then the partial derivative in the $g_{a\bar b}$ direction is
\[g^{i\bar b}g^{a\bar j}\chi_{i\bar j}+f_t\frac{\det\chi_{\alpha\bar\beta}}{\det g_{\alpha\bar\beta}}g^{a\bar b}.\]
At $x$, it equals to \[(\frac{1}{\lambda_a^2}+\frac{1}{\lambda_a}\frac{f_t}{\prod_i\lambda_i})\delta_{a\bar b}.\]
It has uniform upper bound and lower bound.

The second order derivative in $g_{a\bar b}$ and $g_{c\bar d}$ direction is
\[-g^{i\bar d}g^{c\bar b}g^{a\bar j}\chi_{i\bar j}-g^{i\bar b}g^{a\bar d}g^{c\bar j}\chi_{i\bar j}-f_t\frac{\det\chi_{\alpha\bar\beta}}{\det g_{\alpha\bar\beta}}g^{a\bar b}g^{c\bar d}-f_t\frac{\det\chi_{\alpha\bar\beta}}{\det g_{\alpha\bar\beta}}g^{a\bar d}g^{c\bar b}.\]
At $x$, when taking the product with $w_{a\bar b}\overline{w_{c\bar d}}$ and summing $a,b,c,d$ for any matrix $w_{i\bar j}$, we get \[-\sum_{a,b}\frac{1}{\lambda_a^2\lambda_b}|w_{a\bar b}|^2-\sum_{a,b}\frac{1}{\lambda_b^2\lambda_a}|w_{a\bar b}|^2-\frac{f_t}{\prod_i\lambda_i}(\sum_a\frac{w_{a\bar a}}{\lambda_a})^2-\frac{f_t}{\prod_i\lambda_i}\sum_{a,b}\frac{1}{\lambda_a\lambda_b}|w_{a\bar b}|^2.\]
It is easy to see that it is non-positive.

Thus, if we replace the complex second derivatives by real second derivatives, the uniform ellipticity and concavity for the Evans-Krylov estimate \cite{Evans1, Evans2, Krylov, Trudinger} are satisfied. By checking Evans-Krylov's estimate carefully, it is easy to see that in our complex case, the estimate \[[(\varphi_t)_{i\bar j}]_{C^\alpha}\le C_{2.16}\] is still true.

By standard elliptic estimate, $||\varphi_t||_{C^{101,\alpha}}$ is bounded. By Arzela-Ascoli theorem, if $t_i\rightarrow t_\infty$ and $t_i\in I$, then a subsequence of $\varphi_t$ converges to $\varphi_{t_\infty}$ in $C^{100,\alpha}$-norm. By Remark \ref{Nondegeneracy}, \[c\omega_{t_\infty}^{n-1}-(n-1)\chi_t\wedge\omega_{t_\infty}^{n-2}>0.\]
So by standard elliptic regularity, $\varphi_{t_\infty}$ is smooth. In other words, $t_\infty\in I$.
\end{proof}

\section{Concentration of mass and its application}
In this section, we prove Theorem \ref{Current-sub-solution}. However, before that, we need to figure out the correct definition of \[c\omega^p-p\chi\wedge\omega^{p-1}\ge0\] when $\omega$ is only a current.

Recall the following definition of the smoothing:

\begin{definition}
Fix a smooth non-negative function $\rho$ supported in [0,1] such that
\[\int_0^1\rho(t)t^{2n-1}\mathrm{Vol}(\partial B_1(0))dt=1\] and $\rho$ is a positive constant near 0.
For any $\delta>0$, the smoothing $\varphi_\delta$ is defined by
\[\varphi_\delta(x)=\int_{\mathbb{C}^n} \varphi(x-y)\delta^{-2n}\rho(|\frac{y}{\delta}|)d\mathrm{Vol}_y.\]
We can define the smoothing of a current using similar formula. It is easy to see that the smoothing commutes with derivatives. So $(\sqrt{-1}\partial\bar\partial\varphi)_\delta=\sqrt{-1}\partial\bar\partial(\varphi_\delta)$.
\label{Definition-smoothing}
\end{definition}

Recall that $\sqrt{-1}\partial\bar\partial\varphi\ge 0$ if and only if $\sqrt{-1}\partial\bar\partial\varphi_\delta\ge0$ for all $\delta>0$. As an analogy, we can define \[c\omega^p-p\chi\wedge\omega^{p-1}\ge0\] for a closed positive (1,1) current $\omega$ using smoothing. Remark that any closed positive (1,1) current can be written as $\sqrt{-1}\partial\bar\partial$ acting on a real function locally.
\begin{definition}
Suppose that $\chi$ is a K\"ahler form with constant coefficients on an open set $O\subset\mathbb{C}^n$. Then we say that \[c(\sqrt{-1}\partial\bar\partial\varphi)^{p}-p\chi\wedge(\sqrt{-1}\partial\bar\partial\varphi)^{p-1}\ge0\] on $O$ if for any $\delta>0$, the smoothing $\varphi_\delta$ satisfies  \[c(\sqrt{-1}\partial\bar\partial\varphi_\delta)^p-p\chi\wedge(\sqrt{-1}\partial\bar\partial\varphi_\delta)^{p-1}\ge0\] on the set $O_\delta=\{x:B_\delta(x)\subset O\}$.
\end{definition}

We can also define it without the constant coefficients assumption.
\begin{definition}
We say that \[c\omega^p-p\chi\wedge\omega^{p-1}\ge0\] if on any coordinate chart, for any open subset $O$, as long as $\chi\ge\chi_0$ on $O$ for a K\"ahler form $\chi_0$ with constant coefficients, then \[c\omega^p-p\chi_0\wedge\omega^{p-1}\ge0.\]
\label{Definition-current-sub-solution}
\end{definition}

\begin{remark}
\[c(\sqrt{-1}\partial\bar\partial\varphi)^{p}-p\chi_0\wedge(\sqrt{-1}\partial\bar\partial\varphi)^{p-1}\ge0\] is a convex property for $\varphi$. So if $\omega$ is smooth, then \[c\omega^p-p\chi\wedge\omega^{p-1}\ge0\] on $O$ pointwise if and only if it is true on $O$ in the sense of Definition \ref{Definition-current-sub-solution}.
\end{remark}

For simplicity, for any positive definite $n\times n$ matrix $A$, we define $P_I(A)$ by
\[P_I(A)=\max_{k}(\sum_{j\not=k}\frac{1}{\lambda_j})=\max_{V^{n-1}\subset \mathbb{C}^n}(\mathrm{tr}(A|_{V})^{-1}),\] where $\lambda_j$ are the eigenvalues of $A$. Then \[c\omega^{n-1}-(n-1)\chi\wedge\omega^{n-2}\ge0\] is equivalent to $P_\chi(\omega)\le c$.

Now we need a lemma:
\begin{lemma}
\[P_I(A-CB^{-1}\bar C^{T})+\mathrm{tr}(B^{-1})\le P_I(\begin{bmatrix}
    A       & C \\
    \bar C^{T} & B
\end{bmatrix})\]
\label{Subadditive-submatrix}
\end{lemma}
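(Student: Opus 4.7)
The plan is to interpret both sides of the inequality via the second equality in the definition of $P_I$, namely
\[P_I(N)=\max_{V^{\dim N-1}\subset\mathbb{C}^{\dim N}} \mathrm{tr}((N|_V)^{-1}),\]
and then to test the right-hand side on a well-chosen codimension-one subspace built from the optimal subspace of $\mathbb{C}^n$ for the Schur complement $S_0=A-CB^{-1}\bar C^T$. Concretely, let $W\subset\mathbb{C}^n$ of complex codimension one attain $P_I(S_0)=\mathrm{tr}((S_0|_W)^{-1})$, and set $\tilde W=W\oplus\mathbb{C}^m\subset\mathbb{C}^{n+m}$, a codimension-one subspace of the ambient space. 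With $V\colon W\hookrightarrow\mathbb{C}^n$ an isometric inclusion, the restricted matrix reads
\[M|_{\tilde W}=\begin{bmatrix}V^*AV & V^*C\\ \bar C^T V & B\end{bmatrix},\]
and the Schur complement of its $B$-block is exactly $V^*(A-CB^{-1}\bar C^T)V=S_0|_W$.

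Next I would apply the standard block-inverse formula to decompose
\[\mathrm{tr}((M|_{\tilde W})^{-1})=\mathrm{tr}((S_0|_W)^{-1})+\mathrm{tr}(B^{-1})+\mathrm{tr}\bigl(B^{-1}\bar C^T V (S_0|_W)^{-1} V^*CB^{-1}\bigr).\]
The cross term is manifestly nonnegative, being the trace of a positive semidefinite matrix (rewrite it as $\mathrm{tr}((S_0|_W)^{-1/2}V^*CB^{-2}\bar C^T V(S_0|_W)^{-1/2})$); by the choice of $W$ the first term equals $P_I(S_0)$; and $\mathrm{tr}(B^{-1})$ is the second summand on the left-hand side of the lemma. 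Combined with the trivial lower bound $P_I(M)\ge\mathrm{tr}((M|_{\tilde W})^{-1})$, this yields the claim.

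A preliminary step is to verify the trace-variational formula for $P_I$. By Cauchy interlacing, the eigenvalues $\mu_j$ of any restriction $A|_V$ to a codimension-one subspace satisfy $\lambda_j\le\mu_j\le\lambda_{j+1}$, so $\sum_{j=1}^{n-1}\mu_j^{-1}\le\sum_{j=1}^{n-1}\lambda_j^{-1}=\max_k\sum_{j\ne k}\lambda_j^{-1}$, with equality when $V$ is the span of eigenvectors associated with the smallest $n-1$ eigenvalues. This shows the two characterizations of $P_I$ coincide.

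There is no serious obstacle: the entire argument is a short block-matrix computation once one chooses the product subspace $\tilde W=W\oplus\mathbb{C}^m$, which both leaves the $B$-block intact (producing the $\mathrm{tr}(B^{-1})$ term directly) and forces the Schur complement of the restricted matrix to be a restriction of $S_0$. The only genuine observation is the nonnegativity of the cross term from the block-inverse expansion.
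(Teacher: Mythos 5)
Your proof is correct and takes essentially the same route as the paper: reduce to the "full" trace identity by restricting to the codimension-one subspace $W\oplus\mathbb{C}^m$ (which leaves $B$ intact and turns the Schur complement into $S_0|_W$), then expand $\mathrm{tr}(M^{-1})$ via the block factorization and note the cross term is the trace of a positive semidefinite matrix. The paper states the restriction step tersely; you spell it out, and you also add a short Cauchy-interlacing verification of the variational formula for $P_I$, which the paper takes as known, but the core computation is identical.
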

\begin{proof}
By restricting on the codimension 1 subspaces, it suffices to prove that
\[\mathrm{tr}((A-CB^{-1}\bar C^{T})^{-1})+\mathrm{tr}(B^{-1})\le \mathrm{tr}(\begin{bmatrix}
    A       & C \\
    \bar C^{T} & B
\end{bmatrix}^{-1}).\]

It is easy to see that
\[\begin{bmatrix}
    I   & -CB^{-1} \\
    O & I
\end{bmatrix}\begin{bmatrix}
    A               & C \\
    \bar C^{T} & B
\end{bmatrix}\begin{bmatrix}
    I                            & O \\
    -B^{-1}\bar C^{T} & I
\end{bmatrix}=\begin{bmatrix}
    A-CB^{-1}\bar C^{T} & O \\
    O                               & B
\end{bmatrix}.\]
So
\[\begin{bmatrix}
    I                            & O \\
    -B^{-1}\bar C^{T} & I
\end{bmatrix}\begin{bmatrix}
    A-CB^{-1}\bar C^{T} & O \\
    O                               & B
\end{bmatrix}^{-1}\begin{bmatrix}
    I   & -CB^{-1} \\
    O & I
\end{bmatrix}=\begin{bmatrix}
    A               & C \\
    \bar C^{T} & B
\end{bmatrix}^{-1}.\]
After taking traces, the left hand side equals to
\[\mathrm{tr}((A-CB^{-1}\bar C^{T})^{-1})+\mathrm{tr}(B^{-1})+\mathrm{tr}(B^{-1}\bar C^{T}(A-CB^{-1}\bar C^{T})^{-1}CB^{-1}).\]
\end{proof}

Now we start the proof of Theorem \ref{Current-sub-solution}. By assumption, for any $t>0$, there exist $c_t>0$ and $\omega_t\in[(1+t)\omega_0]$ satisfying \[c\omega_t^{n-1}-(n-1)\chi\wedge\omega_t^{n-2}>0\] and \[\mathrm{tr}_{\omega_t}\chi+c_t\frac{\chi^n}{\omega_t^n}=c.\] Consider $\omega_{0,M\times M,t}=\pi_1^*\omega_t+\frac{1}{c}\pi_2^*\chi$ and $\chi_{M\times M}=\pi_1^*\chi+\pi_2^*\chi$. At each point, diagonalizing them so that $\chi_{i\bar j}=\delta_{i\bar j}$ and $(\omega_t)_{i\bar j}=\lambda_i\delta_{i\bar j}$. Then the eigenvalues on the product manifold are $\lambda_1, ... \lambda_n, \frac{1}{c}, ..., \frac{1}{c}$. Their inverses are $\frac{1}{\lambda_1}, ..., \frac{1}{\lambda_j}, c, ..., c$. So the sum of them is at most $(n+1)c$ because $c_t>0$. In particular, the sum of (2n-1) distinct elements among them is also at most $(n+1)c$. Define $f_{t,\epsilon_{1.6},\epsilon_{1.7}}$ as in Section 1, then there exists $\epsilon_{1.7}>0$ such that for $\epsilon_{1.6}$ small enough, $f_{t,\epsilon_{1.6},\epsilon_{1.7}}>-\frac{1}{4n}(\frac{1}{(n+1)c})^{2n-1}$. So we can apply Theorem \ref{Solvability-assuming-sub-solution} to get $\omega_{t,\epsilon_{1.6},\epsilon_{1.7}}\in[\omega_{0,M\times M,t}]$ such that $P_{\chi_{M\times M}}(\omega_{t,\epsilon_{1.6},\epsilon_{1.7}})<(n+1)c$ and \[\mathrm{tr}_{\omega_{t,\epsilon_{1.6},\epsilon_{1.7}}}\chi_{M\times M}+f_{t,\epsilon_{1.6},\epsilon_{1.7}}\frac{\chi_{M\times M}^{2n}}{\omega_{t,\epsilon_{1.6},\epsilon_{1.7}}^{2n}}=(n+1)c.\]

For each point $(x_1,x_2)$, we assume that $z_1^{(1)}, ... , z_n^{(1)}$ are the local coordinates on $M\times \{x_2\}$, and $z_1^{(2)}, ..., z_n^{(2)}$ are the local coordinates on $\{x_1\}\times M$. Then we can express $\omega_{t,\epsilon_{1.6},\epsilon_{1.7}}$ as \[\omega_{t,\epsilon_{1.6},\epsilon_{1.7}}=\omega^{(1)}_{t,\epsilon_{1.6},\epsilon_{1.7}}+\omega^{(2)}_{t,\epsilon_{1.6},\epsilon_{1.7}}+\omega^{(1,2)}_{t,\epsilon_{1.6},\epsilon_{1.7}}+\omega^{(2,1)}_{t,\epsilon_{1.6},\epsilon_{1.7}},\]
where \[\omega^{(1)}_{t,\epsilon_{1.6},\epsilon_{1.7}}=\sum_{i,j=1}^{n}\sqrt{-1}\omega^{(1)}_{t,\epsilon_{1.6},\epsilon_{1.7},i\bar j}dz_i^{(1)}\wedge d\bar z_j^{(1)},\]
\[\omega^{(2)}_{t,\epsilon_{1.6},\epsilon_{1.7}}=\sum_{i,j=1}^{n}\sqrt{-1}\omega^{(2)}_{t,\epsilon_{1.6},\epsilon_{1.7},i\bar j}dz_i^{(2)}\wedge d\bar z_j^{(2)},\]
\[\omega^{(1,2)}_{t,\epsilon_{1.6},\epsilon_{1.7}}=\sum_{i,j=1}^{n}\sqrt{-1}\omega^{(1,2)}_{t,\epsilon_{1.6},\epsilon_{1.7},i\bar j}dz_i^{(1)}\wedge d\bar z_j^{(2)},\]
\[\omega^{(2,1)}_{t,\epsilon_{1.6},\epsilon_{1.7}}=\overline{\omega^{(1,2)}_{t,\epsilon_{1.6},\epsilon_{1.7}}}.\]
After changing the definition of $z^{(2)}_i$ if necessary, we can assume that \[\pi_2^*\chi=\sqrt{-1}\sum_{i=1}^n dz^{(2)}_i\wedge d\bar z^{(2)}_i\] and \[\omega^{(2)}_{t,\epsilon_{1.6},\epsilon_{1.7}}=\sqrt{-1}\sum_{i=1}^n \lambda_i dz^{(2)}_i\wedge d\bar z^{(2)}_i\] at $(x_1,x_2)$.

Now consider $\omega_{1,t,\epsilon_{1.6},\epsilon_{1.7}}$ defined as \[\begin{split}
&\omega_{1,t,\epsilon_{1.6},\epsilon_{1.7}}\\
&=\frac{c^{n-1}}{\int_M n\chi^n}(\pi_1)_*(\omega_{t,\epsilon_{1.6},\epsilon_{1.7}}^n\wedge\pi_2^*\chi)\\
&=\frac{c^{n-1}}{\int_M n\chi^n}(\pi_1)_*(n\omega_{t,\epsilon_{1.6},\epsilon_{1.7}}^{(1)}\wedge(\omega_{t,\epsilon_{1.6},\epsilon_{1.7}}^{(2)})^{n-1}\wedge\pi_2^*\chi)\\
&+\frac{c^{n-1}}{\int_M n\chi^n}(\pi_1)_*(n(n-1)\omega_{t,\epsilon_{1.6},\epsilon_{1.7}}^{(1,2)}\wedge\omega_{t,\epsilon_{1.6},\epsilon_{1.7}}^{(2,1)}\wedge(\omega_{t,\epsilon_{1.6},\epsilon_{1.7}}^{(2)})^{n-2}\wedge\pi_2^*\chi).\end{split}\]
At $(x_1,x_2)$,
 \[\begin{split}
&n\omega_{t,\epsilon_{1.6},\epsilon_{1.7}}^{(1)}\wedge(\omega_{t,\epsilon_{1.6},\epsilon_{1.7}}^{(2)})^{n-1}\wedge\pi_2^*\chi\\
&=\sum_{i,j=1}^{n}\sqrt{-1}\omega^{(1)}_{t,\epsilon_{1.6},\epsilon_{1.7},i\bar j}dz_i^{(1)}\wedge d\bar z_j^{(1)}\wedge(\sum_{\alpha=1}^{n}\frac{1}{\lambda_\alpha})(\omega_{t,\epsilon_{1.6},\epsilon_{1.7}}^{(2)})^{n},
\end{split}\] and
\[\begin{split}
&n(n-1)\omega_{t,\epsilon_{1.6},\epsilon_{1.7}}^{(1,2)}\wedge\omega_{t,\epsilon_{1.6},\epsilon_{1.7}}^{(2,1)}\wedge(\omega_{t,\epsilon_{1.6},\epsilon_{1.7}}^{(2)})^{n-2}\wedge\pi_2^*\chi\\
&=-\sum_{i,j,k=1}^{n}\sqrt{-1}\omega^{(1,2)}_{t,\epsilon_{1.6},\epsilon_{1.7},i\bar k}\overline{\omega^{(1,2)}_{t,\epsilon_{1.6},\epsilon_{1.7},j\bar k}}dz_i^{(1)}\wedge d\bar z_j^{(1)}\wedge\frac{1}{\lambda_k}(\sum_{\alpha\not=k}\frac{1}{\lambda_\alpha})(\omega_{t,\epsilon_{1.6},\epsilon_{1.7}}^{(2)})^{n}\\
&\ge-\sum_{i,j,k=1}^{n}\sqrt{-1}\omega^{(1,2)}_{t,\epsilon_{1.6},\epsilon_{1.7},i\bar k}\overline{\omega^{(1,2)}_{t,\epsilon_{1.6},\epsilon_{1.7},j\bar k}}dz_i^{(1)}\wedge d\bar z_j^{(1)}\wedge\frac{1}{\lambda_k}(\sum_{\alpha=1}^{n}\frac{1}{\lambda_\alpha})(\omega_{t,\epsilon_{1.6},\epsilon_{1.7}}^{(2)})^{n}.
\end{split}\]
By Lemma \ref{Subadditive-submatrix},
\[\begin{split}
&P_{\pi_1^*\chi}(\sqrt{-1}\sum_{i,j=1}^{n}((\omega^{(1)}_{t,\epsilon_{1.6},\epsilon_{1.7},i\bar j}-\sum_{k=1}^{n}\frac{1}{\lambda_k}\omega^{(1,2)}_{t,\epsilon_{1.6},\epsilon_{1.7},i\bar k}\overline{\omega^{(1,2)}_{t,\epsilon_{1.6},\epsilon_{1.7},j\bar k}})dz_i^{(1)}\wedge d\bar z_j^{(1)}))\\
&\le P_{\chi_{M\times M}}(\omega_{t,\epsilon_{1.6},\epsilon_{1.7}})-\mathrm{tr}_{\omega^{(2)}_{t,\epsilon_{1.6},\epsilon_{1.7}}}(\pi_2^*\chi)\\
&\le (n+1)c-\mathrm{tr}_{\omega^{(2)}_{t,\epsilon_{1.6},\epsilon_{1.7}}}(\pi_2^*\chi).
\end{split}\]

Now we view \[\frac{c^{n-1}}{\int_M n\chi^n}(\mathrm{tr}_{\omega^{(2)}_{t,\epsilon_{1.6},\epsilon_{1.7}}}\pi_2^*\chi)(\omega^{(2)}_{t,\epsilon_{1.6},\epsilon_{1.7}})^{n}\] as a measure on $\{x_1\}\times M$, then it is easy to see that
\[\begin{split}
&\frac{c^{n-1}}{\int_M n\chi^n}\int_{\{x_1\}\times M}(\mathrm{tr}_{\omega^{(2)}_{t,\epsilon_{1.6},\epsilon_{1.7}}}\pi_2^*\chi)(\omega^{(2)}_{t,\epsilon_{1.6},\epsilon_{1.7}})^{n}\\
&=\frac{c^{n-1}}{\int_M \chi^n}\int_{\{x_1\}\times M}(\pi_2^*\chi)\wedge(\omega^{(2)}_{t,\epsilon_{1.6},\epsilon_{1.7}})^{n-1}\\
&=\frac{c^{n-1}}{\int_M \chi^n}\int_M\chi\wedge(\frac{\chi}{c})^{n-1}\\
&=1.
\end{split}\]

By the monotonicity and convexity of $P_\chi$, \[\begin{split}
P_\chi(\omega_{1,t,\epsilon_{1.6},\epsilon_{1.7}})&\le(n+1)c-\frac{c^{n-1}}{\int_M n\chi^n}\int_{\{x_1\}\times M}(\mathrm{tr}_{\omega^{(2)}_{t,\epsilon_{1.6},\epsilon_{1.7}}}\pi_2^*\chi)^2(\omega^{(2)}_{t,\epsilon_{1.6},\epsilon_{1.7}})^{n}\\
&\le(n+1)c-\frac{c^{n-1}}{\int_M n\chi^n}\frac{(\int_{\{x_1\}\times M}(\mathrm{tr}_{\omega^{(2)}_{t,\epsilon_{1.6},\epsilon_{1.7}}}\pi_2^*\chi)(\omega^{(2)}_{t,\epsilon_{1.6},\epsilon_{1.7}})^{n})^2}{\int_{\{x_1\}\times M}(\omega^{(2)}_{t,\epsilon_{1.6},\epsilon_{1.7}})^{n}}\\
&=(n+1)c-\frac{nc^{n-1}}{\int_M \chi^n}\frac{(\int_{M}(\frac{\chi}{c})^{n-1}\wedge\chi)^2}{\int_{M}(\frac{\chi}{c})^{n}}\\
&=c.
\end{split}\]

Up to here, we have not used the equation \[\mathrm{tr}_{\omega_{t,\epsilon_{1.6},\epsilon_{1.7}}}\chi_{M\times M}+f_{t,\epsilon_{1.6},\epsilon_{1.7}}\frac{\chi_{M\times M}^{2n}}{\omega_{t,\epsilon_{1.6},\epsilon_{1.7}}^{2n}}=(n+1)c.\] By the equation, $\omega_{t,\epsilon_{1.6},\epsilon_{1.7}}^{2n}\ge \frac{f_{t,\epsilon_{1.6},\epsilon_{1.7}}}{(n+1)c}\chi_{M\times M}^{2n}$. So as in Proposition 2.6 of \cite{DemaillyPaun}, it is easy to see that for any weak limit $\Theta$ of $\omega_{t,\epsilon_{1.6},\epsilon_{1.7}}^n$ when $t$ and $\epsilon_{1.6}$ converging to 0, $1_\Delta\Theta=\epsilon_{3.1}[\Delta]$ for a constant $\epsilon_{3.1}>0$. Let $\Delta_{\epsilon_{3.2}}$ be the $\epsilon_{3.2}$ neighborhood of $\Delta$ with respect to $\chi_{M\times M}$. Then for any $\delta>0$, for any small enough $\epsilon_{3.2}$ and $\epsilon_{3.3}$, the smoothing of \[\frac{c^{n-1}}{\int_M n\chi^n}\int_{(\{x_1\}\times M)\cap\Delta_{\epsilon_{3.2}}}\omega_{t,\epsilon_{1.6},\epsilon_{1.7}}^n\wedge\pi_2^*\chi-\frac{c^{n-1}}{\int_M n\chi^n}\epsilon_{3.1}\chi\] is at least $-\epsilon_{3.3}\chi$ for small enough $t$ and $\epsilon_{1.6}$.

Similarly, locally for any $n+1$ dimensional subvariety $V$ containing $\Delta$, for any weak limit $\Theta'$ of $\omega_{t,\epsilon_{1.6},\epsilon_{1.7}}^{n-1}$, $1_V\Theta'=\epsilon_{3.4}[V]$ for $\epsilon_{3.4}\ge 0$. Since the dimension of $\Delta$ is strictly smaller then $V$, for any fixed smoothing function, for any $\epsilon_{3.3}>0$, there exists $\epsilon_{3.2}>0$ such that the smoothing of \[\frac{c^{n-1}}{\int_M n\chi^n}\int_{(\{x_1\}\times M)\cap\Delta_{\epsilon_{3.2}}}\pi_1^*\chi((\omega^{(2)}_{t,\epsilon_{1.6},\epsilon_{1.7}})^{n-1}\wedge\pi_2^*\chi)\] is at most $\epsilon_{3.3}\chi$ for small enough $t$ and $\epsilon_{1.6}$.

Now let $\epsilon_{3.5}$ be an arbitrary small positive number. Then we can choose $\epsilon_{3.3}$ such that $\frac{\epsilon_{3.3}}{\epsilon_{3.5}}+\epsilon_{3.3}<\frac{1}{2}\frac{c^{n-1}}{\int_M n\chi^n}\epsilon_{3.1}$. Then we choose the number $\epsilon_{3.2}$ depending on $\epsilon_{3.3}$. For any K\"ahler form $\omega$ restricted to the first $n$ coordinates of $M\times M$, after choosing a good coordinate, assume that $\pi_1^*\chi=\delta_{i\bar j}$ and $\omega_{i\bar j}=\lambda_i\delta_{i\bar j}$. We define the truncation $T_{\frac{
\pi_1^*\chi}{\epsilon_{3.5}}}(\omega)$ by $(T_{\frac{\pi_1^*\chi}{\epsilon_{3.5}}}(\omega))_{i\bar j}=\min\{\lambda_i,\frac{1}{\epsilon_{3.5}}\}\delta_{i\bar j}$. Now consider the truncation $\omega_{1,t,\epsilon_{1.6},\epsilon_{1.7}}^{(\frac{\pi_1^*\chi}{\epsilon_{3.5}})}$ defined as \[\frac{c^{n-1}}{\int_M n\chi^n}(\pi_1)_*(T_{\frac{\pi_1^*\chi}{\epsilon_{3.5}}}(\tilde\omega^{(1)}_{t,\epsilon_{1.6},\epsilon_{1.7}})\wedge(\omega^{(2)}_{t,\epsilon_{1.6},\epsilon_{1.7}})^{n-1}\wedge\pi_2^*\chi),\]
where the (1,1)-form $\tilde\omega^{(1)}_{t,\epsilon_{1.6},\epsilon_{1.7}}$ is defined by
\[\begin{split}
&n\omega_{t,\epsilon_{1.6},\epsilon_{1.7}}^{(1)}\wedge(\omega_{t,\epsilon_{1.6},\epsilon_{1.7}}^{(2)})^{n-1}\wedge\pi_2^*\chi\\
&+n(n-1)\omega_{t,\epsilon_{1.6},\epsilon_{1.7}}^{(1,2)}\wedge\omega_{t,\epsilon_{1.6},\epsilon_{1.7}}^{(2,1)}\wedge(\omega_{t,\epsilon_{1.6},\epsilon_{1.7}}^{(2)})^{n-2}\wedge\pi_2^*\chi\\
&=\tilde\omega^{(1)}_{t,\epsilon_{1.6},\epsilon_{1.7}}\wedge(\omega^{(2)}_{t,\epsilon_{1.6},\epsilon_{1.7}})^{n-1}\wedge\pi_2^*\chi.
\end{split}\]
The smoothing of \[\omega_{1,t,\epsilon_{1.6},\epsilon_{1.7}}-\omega_{1,t,\epsilon_{1.6},\epsilon_{1.7}}^{(\frac{\pi_1^*\chi}{\epsilon_{3.5}})}-\frac{c^{n-1}}{\int_M n\chi^n}\epsilon_{3.1}\chi\] is at least $-\frac{\epsilon_{3.3}}{\epsilon_{3.5}}\chi-\epsilon_{3.3}\chi$ for small enough $t$ and $\epsilon_{1.6}$. In fact, the sum of the first two terms is non-negative outside $\Delta_{3.2}$, the sum of the first and third term inside $\Delta_{3.2}$ is at least $-\epsilon_{3.3}\chi$ and the second term inside $\Delta_{3.2}$ is at least $-\frac{\epsilon_{3.3}}{\epsilon_{3.5}}\chi$.

By the choice of $\epsilon_{3.3}$, the smoothing of \[\omega_{1,t,\epsilon_{1.6},\epsilon_{1.7}}-\omega_{1,t,\epsilon_{1.6},\epsilon_{1.7}}^{(\frac{\pi_1^*\chi}{\epsilon_{3.5}})}-\frac{1}{2}\frac{c^{n-1}}{\int_M n\chi^n}\epsilon_{3.1}\chi\] is nonnegative for small enough $t$ and $\epsilon_{1.6}$. On the other hands, \[P_{\pi_1^*\chi}(T_{\frac{\pi_1^*\chi}{\epsilon_{3.5}}}(\omega))-P_{\pi_1^*\chi}(\omega)\le (n-1)\epsilon_{3.5}\] for any (1,1)-form $\omega$ on the first $n$ coordinates of $M\times M$. So using the estimate of $P_\chi(\omega_{1,t,\epsilon_{1.6},\epsilon_{1.7}})$, it is easy to see that \[P_\chi(\omega_{1,t,\epsilon_{1.6},\epsilon_{1.7}}^{(\frac{\pi_1^*\chi}{\epsilon_{3.5}})})\le c+(n-1)\epsilon_{3.5}.\] So if $\chi\ge\chi_0$ on the support of the smoothing function for a K\"ahler form $\chi_0$ with constant coefficients, then $P_{\chi_0}$ acting on the smoothing of $\omega_{1,t,\epsilon_{1.6},\epsilon_{1.7}}^{(\frac{\pi_1^*\chi}{\epsilon_{3.5}})}$ is at most $c+(n-1)\epsilon_{3.5}$. So $P_{\chi_0}$ acting on the smoothing of $\omega_{1,t,\epsilon_{1.6},\epsilon_{1.7}}-\frac{1}{2}\frac{c^{n-1}}{\int_M n\chi^n}\epsilon_{3.1}\chi$ is also at most $c+(n-1)\epsilon_{3.5}$. Let $\omega_{1.5}$ be a weak limit of $\omega_{1,t,\epsilon_{1.6},\epsilon_{1.7}}-\frac{1}{2}\frac{c^{n-1}}{\int_M n\chi^n}\epsilon_{3.1}\chi$, then $\omega_{1.5}\in[\omega_0-\epsilon_{1.4}\chi]$, where $\epsilon_{1.4}=\frac{1}{2}\frac{c^{n-1}}{\int_M n\chi^n}\epsilon_{3.1}$. Moreover, $P_{\chi_0}$ acting on the smoothing of $\omega_{1.5}$ is at most $c+(n-1)\epsilon_{3.5}$. Since $\epsilon_{3.5}$ is arbitrary, it is at most $c$. This completes the proof of Theorem \ref{Current-sub-solution}.

\section{Regularization}

In this section, we prove Theorem \ref{Main-induction}. By Remark \ref{Base-case}, the $n=1$ and $n=2$ cases have been proved. By induction, we can assume that Theorem \ref{Main-induction} has been proved in dimension 1, 2, ..., $n-1$. By Section 1, we can in addition assume that the condition of Theorem \ref{Current-sub-solution} are satisfied. So by Theorem \ref{Current-sub-solution}, there exist a constant $\epsilon_{1.4}>0$ and a current $\omega_{1.5}\in[\omega_0-\epsilon_{1.4}\chi]$ such that \[c\omega_{1.5}^{n-1}-(n-1)\chi\wedge\omega_{1.5}^{n-2}\ge0\] in the sense of Definition \ref{Definition-current-sub-solution}.

Pick small enough  $\epsilon_{4.1}<\frac{1}{10000}$ such that \[\omega_0-100\epsilon_{4.1}\omega_0\ge(1+\epsilon_{4.1})^2(\omega_0-\epsilon_{1.4}\chi).\] Then there exists a current $\omega_{4.2}=\omega_0-100\epsilon_{4.1}\omega_0+\sqrt{-1}\partial\bar\partial\varphi_{4.2}$ such that \[\frac{c}{(1+\epsilon_{4.1})^2}\omega_{4.2}^{n-1}-(n-1)\chi\wedge\omega_{4.2}^{n-2}\ge0\] in the sense of Definition \ref{Definition-current-sub-solution}.

Now we pick a finite number of coordinate balls $B_{2r}(x_i)$ such that $B_r(x_i)$ is a cover of $M$. Moreover, we require that \[\chi_0^i<\chi<(1+\epsilon_{4.1})\chi_0^i\] on $B_{2r}(x_i)$ for K\"ahler forms $\chi_0^i$ with constant coefficients. We also assume that \[(1-\epsilon_{4.1})\sqrt{-1}\partial\bar\partial|z|^2\le\omega_0\le(1+\epsilon_{4.1})\sqrt{-1}\partial\bar\partial|z|^2\] on $B_{2r}(x_i)$. Let $\varphi_{\omega_0}^i$ be potential such that $\sqrt{-1}\partial\bar\partial\varphi_{\omega_0}^i=\omega_0$ on $B_{2r}(x_i)$. Then we also assume that \[|\varphi_{\omega_0}^i-|z|^2|\le\epsilon_{4.1} r^2.\] Let $\varphi_\delta^i$ be the smoothing of $\varphi_{4.2}+(1-100\epsilon_{4.1})\varphi_{\omega_0}^i$. When $\delta<\frac{r}{5}$, it is well defined on $\overline{B_{\frac{9}{5}r}(x_i)}$. By assumption, it is easy to see that \[\frac{c}{(1+\epsilon_{4.1})^2}(\sqrt{-1}\partial\bar\partial\varphi_\delta^i)^{n-1}-(n-1)\chi_0^i\wedge(\sqrt{-1}\partial\bar\partial\varphi_\delta^i)^{n-2}\ge0.\]
So  \[\frac{c}{1+\epsilon_{4.1}}(\sqrt{-1}\partial\bar\partial\varphi_\delta^i)^{n-1}-(n-1)\chi\wedge(\sqrt{-1}\partial\bar\partial\varphi_\delta^i)^{n-2}>0.\] Now define the function $\varphi_{4.3}^i$ from $\overline{B_{\frac{9}{5}r}(x_i)}$ to $\mathbb{R}$ as $\varphi_\delta^i-\varphi_{\omega_0}^i$, our goal is to show that for any $x\in M$, \[\epsilon_{4.1}r^2+\max_{\{i:x\in \overline{B_{\frac{9}{5}r}(x_i)}\setminus B_{\frac{8}{5}r}(x_i)\}}\varphi_{4.3}^i(x)<\max_{\{i:x\in \overline{B_{r}(x_i)}\}}\varphi_{4.3}^i(x).\] If this is true, then for the smooth function $\varphi_{4.4}=\tilde\max\varphi_{4.3}^i$ on $M$, where $\tilde\max$ means the regularized maximum by choosing the parameters ``$\eta_i$" in Lemma I.5.18 of \cite{Demailly} to be smaller than $\frac{\epsilon_{4.1}r^2}{3}$, the K\"ahler form $\omega_{4.4}=\omega_0+\sqrt{-1}\partial\bar\partial\varphi_{4.4}$ will satisfy $c\omega_{4.4}^{n-1}-(n-1)\chi\wedge\omega_{4.4}^{n-2}>0$. In general, this is not true. However, using the proof of the results of B\l{}ocki and Ko\l{}odziej \cite{BlockiKolodziej}, it is in fact true if the Lelong number is small enough. The details consist the rest of this section.

It is easy to see that if $x\in\overline{B_{\frac{9}{5}r}(x_i)}\cap\overline{B_{\frac{9}{5}r}(x_j)}$ and $\delta<\frac{r}{10}$, $B_{\frac{\delta}{2}}^i(x)\subset B_{\delta}^j(x)$. For any $\delta<\frac{r}{20}$ and $x\in\overline{B_{\frac{9}{5}r}(x_i)}$, we define $\hat\varphi_\delta^i$ by \[\hat\varphi_\delta^i(x)=\sup_{B_\delta^i(x)}(\varphi_{4.2}+(1-100\epsilon_{4.1})\varphi_{\omega_0}^i)\] and define $\nu^i(x,\delta)$ by \[\nu^i(x,\delta)=\frac{\hat\varphi_{\frac{r}{16}}^i(x)-\hat\varphi_\delta^i(x)}{\log(\frac{r}{16})-\log\delta}.\] Then $\nu^i(x,\delta)$ is monotonically non-decreasing in $\delta$. Recall that the Lelong number is defined by \[\nu^i(x)=\lim_{\delta\rightarrow0}\nu^i(x,\delta).\] It is independent of $i$ and can be denoted as $\nu(x)$ instead. Recall the definition of $\rho$ in Definition \ref{Definition-smoothing}. Let \[\epsilon_{4.5}=\frac{\epsilon_{4.1} r^2}{5(\int_0^1\log(\frac{1}{t})\mathrm{Vol}(\partial B_1(0))t^{2n-1}\rho(t)dt+\log 2+\frac{3^{2n-1}}{2^{2n-3}}\log 2)},\]
then by the result of Siu \cite{Siu}, the set $Y=\{x:\nu(x)\ge\epsilon_{4.5}\}$ is a subvariety.

For simplicity, we assume that $Y$ is smooth. The singular case will be done at the end of this section.

Since $Y$ is smooth by our assumption, as in Section 1, there exists a smooth function $\varphi_{1.11}$ in a neighborhood $O$ of $Y$ such that \[(c-\frac{n-p}{2}\epsilon_{1.1})\omega_{1.11}^{n-1}-(n-1)\chi\wedge\omega_{1.11}^{n-2}>0\] on $O$. Now we pick smaller neighborhoods $O'$ and $O''$ such that $\overline{O'}\subset O$ and $\overline{O''}\subset O'$. We need to prove the following proposition:
\begin{proposition}
(1) For small enough $\delta<\frac{r}{20}$, if \[\max_{\{i:x\in \overline{B_{\frac{9}{5}r}(x_i)}\}}\nu^i(x,\delta)\le2\epsilon_{4.5},\] then \[\sup_{\overline{O'}}\varphi_{1.11}+3\epsilon_{4.5}\log\delta+\epsilon_{4.1}r^2<\max_{\{i:x\in \overline{B_{\frac{9}{5}r}(x_i)}\}}(\varphi_\delta^i(x)-\varphi_{\omega_0}^i(x)).\]

(2) For small enough $\delta<\frac{r}{20}$, if \[\inf_{\overline{O'}}\varphi_{1.11}+3\epsilon_{4.5}\log\delta-\epsilon_{4.1}r^2\le\max_{\{i:x\in \overline{B_{\frac{9}{5}r}(x_i)}\}}(\varphi_\delta^i(x)-\varphi_{\omega_0}^i(x)),\] then \[\max_{\{i:x\in \overline{B_{\frac{9}{5}r}(x_i)}\}}\nu^i(x,\delta)<4\epsilon_{4.5}.\]

(3) For small enough $\delta<\frac{r}{20}$, if \[\max_{\{i:x\in \overline{B_{\frac{9}{5}r}(x_i)}\}}\nu^i(x,\delta)\le4\epsilon_{4.5}.\] then \[\max_{\{i:x\in \overline{B_{\frac{9}{5}r}(x_i)}\setminus B_{\frac{8}{5}r}(x_i)\}}(\varphi_\delta^i(x)-\varphi_{\omega_0}^i(x))+\epsilon_{4.1}r^2<\max_{\{i:x\in \overline{B_{r}(x_i)}\}}(\varphi_\delta^i(x)-\varphi_{\omega_0}^i(x)).\]
\label{Smoothing-global-error-control}
\end{proposition}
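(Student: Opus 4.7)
All three parts rest on one core smoothing estimate. For any plurisubharmonic function $u$ on $B_{2r}(x_i)$ and $\delta<r/16$, I would establish
\[\hat u_\delta(x)-C_\rho\,\nu^i(x,\delta)\;\le\;u_\delta(x)\;\le\;\hat u_\delta(x),\qquad C_\rho=\int_0^1\log(1/t)\,\rho(t)\,t^{2n-1}\,|\partial B_1(0)|\,dt.\]
The upper bound is immediate, as $u_\delta$ is a weighted average. For the lower bound, I would rewrite $u_\delta(x)=\int_0^1 \rho(t)t^{2n-1}|\partial B_1(0)|\,M_u(x,\delta t)\,dt$, use the log-convexity of $M_u(x,\cdot)$ together with the monotonicity of $\nu^i(x,\cdot)$ to pass from $M_u(x,\delta t)$ back to $\hat u_\delta(x)$ at a cost $\nu^i(x,\delta)\log(1/t)$, then integrate. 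Specializing to $u=\varphi_{4.2}+(1-100\epsilon_{4.1})\varphi_{\omega_0}^i$ and using that $\varphi_{\omega_0}^i$ is smooth with $|\varphi_{\omega_0}^i-|z^{(i)}|^2|\le\epsilon_{4.1}r^2$, this converts into a two-sided control of $\varphi_\delta^i(x)-\varphi_{\omega_0}^i(x)$ by $\hat\varphi_{4.2,\delta}(x)-100\epsilon_{4.1}\varphi_{\omega_0}^i(x)$, up to an $O(\delta^2)$ smooth error and an $O(\nu^i(x,\delta))$ Lelong error.

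Part (1) is then essentially immediate. Under $\nu^i(x,\delta)\le 2\epsilon_{4.5}$, the very definition of $\nu^i$ gives $\hat u_\delta(x)\ge \hat u_{r/16}(x)-2\epsilon_{4.5}(\log(r/16)-\log\delta)$, and since $\hat u_{r/16}$ is globally bounded I obtain $\varphi_\delta^i(x)-\varphi_{\omega_0}^i(x)\ge 2\epsilon_{4.5}\log\delta-C$ for a fixed $C$ (into which $C_\rho\epsilon_{4.5}$ is absorbed). Subtracting $\sup_{\overline{O'}}\varphi_{1.11}+3\epsilon_{4.5}\log\delta+\epsilon_{4.1}r^2$ leaves $-\epsilon_{4.5}\log\delta-C'$, which is positive for $\delta$ sufficiently small since $\log\delta\to-\infty$. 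Part (2) is the contrapositive: if some $\nu^i(x,\delta)\ge 4\epsilon_{4.5}$, the analogous upper bound gives $\varphi_\delta^i(x)-\varphi_{\omega_0}^i(x)\le 4\epsilon_{4.5}\log\delta+C$, and because the Lelong number $\nu(x)$ is chart-independent while the Euclidean balls $B_\delta^i(x),B_\delta^{i'}(x)$ are comparable via the factor $(1\pm\epsilon_{4.1})$ (the $\log 2$ summand in the denominator of $\epsilon_{4.5}$ is engineered to absorb this chart-rescaling loss), the same upper bound with a slightly worse $C$ transfers to every other $i'$ with $x\in\overline{B_{9r/5}(x_{i'})}$. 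Comparison with $\inf_{\overline{O'}}\varphi_{1.11}+3\epsilon_{4.5}\log\delta-\epsilon_{4.1}r^2$ again forces the strict inequality for small $\delta$.

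For part (3), given $x\in\overline{B_{9r/5}(x_i)}\setminus B_{8r/5}(x_i)$, I pick $j$ with $x\in\overline{B_r(x_j)}$, which exists because $\{B_r(x_k)\}$ covers $M$. Unwinding,
\[(\varphi_\delta^j-\varphi_{\omega_0}^j)(x)-(\varphi_\delta^i-\varphi_{\omega_0}^i)(x)=100\epsilon_{4.1}\bigl(\varphi_{\omega_0}^i(x)-\varphi_{\omega_0}^j(x)\bigr)+\bigl((\varphi_{4.2})_\delta^j(x)-(\varphi_{4.2})_\delta^i(x)\bigr)+O(\delta^2).\]
By the standing assumption $|\varphi_{\omega_0}^k-|z^{(k)}|^2|\le\epsilon_{4.1}r^2$, the smooth piece is at least $100\epsilon_{4.1}\bigl((8r/5)^2-r^2-2\epsilon_{4.1}r^2\bigr)=156\epsilon_{4.1}r^2-O(\epsilon_{4.1}^2 r^2)$. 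The smoothing discrepancy, on the other hand, is controlled by $\nu^i(x,\delta)\le 4\epsilon_{4.5}$ together with a cross-chart volume-comparison factor (one bounds $B_\delta$ in one Euclidean structure by a slightly larger ball in the other and then invokes the Lelong monotonicity), producing the summand $\frac{3^{2n-1}}{2^{2n-3}}\log 2$ in the denominator of $\epsilon_{4.5}$; this bounds the discrepancy by $(4/5)\epsilon_{4.1}r^2$. The smooth piece therefore exceeds the discrepancy by more than $\epsilon_{4.1}r^2$, as required.

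The structural difficulty, and the main obstacle, is to align the three numerical constants in $\epsilon_{4.5}$ with the exact Lelong losses in the arguments above: $C_\rho$ absorbs the loss in the core smoothing estimate, the first $\log 2$ absorbs the Euclidean-ball rescaling between charts in parts (1)--(2), and $\frac{3^{2n-1}}{2^{2n-3}}\log 2$ absorbs the cross-chart volume-comparison loss in part (3). The precise denominator $5\bigl(C_\rho+\log 2+\frac{3^{2n-1}}{2^{2n-3}}\log 2\bigr)$ in the definition of $\epsilon_{4.5}$ is engineered so that each loss fits within $\epsilon_{4.1}r^2/5$, leaving the margin $\epsilon_{4.5}|\log\delta|$ to dominate once $\delta$ is below an explicit threshold.
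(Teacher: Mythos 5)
Your overall strategy matches the paper's: derive a local smoothing estimate controlled by the Lelong number, use the divergent $|\log\delta|$ margin for parts (1)--(2), and for part (3) play the geometric gain from $\varphi_{\omega_0}^i-\varphi_{\omega_0}^j\gtrsim(8r/5)^2-r^2$ against the Lelong smoothing loss. Parts (1)--(3) as you sketch them would go through on top of a correct local estimate, and your use of the factor-of-two ball inclusion $B^i_{\delta/2}\subset B^j_{\delta}$ to transfer information between charts is the same as the paper's Lemma 4.2(1).

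However, the ``core smoothing estimate'' you propose is false as stated, and this is a genuine gap. You claim $u_\delta(x)\ge\hat u_\delta(x)-C_\rho\,\nu^i(x,\delta)$ with $C_\rho=\int_0^1\log(1/t)\,\rho(t)\,t^{2n-1}|\partial B_1(0)|\,dt$, arguing that the log-convexity of the spherical mean lets you pass from $M_u(x,\delta t)$ to $\hat u_\delta(x)$ at cost $\nu^i(x,\delta)\log(1/t)$. But $\hat u_\delta-M_u(x,\delta t)$ splits as $(\hat u_\delta-\hat u_{\delta t})+(\hat u_{\delta t}-M_u(x,\delta t))$, and log-convexity of $\hat u$ only controls the first summand; the second is the sup-versus-mean gap, which does not vanish as $t\to1$. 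Concretely, for $u=\log|z_1|$ at $x=0$ one computes $\hat u_\delta-u_\delta=C_\rho-\int_{S^{2n-1}}\log|\xi_1|\,d\sigma(\xi)>C_\rho$, so your inequality is violated. Closing this gap is exactly the content of B\l{}ocki--Ko\l{}odziej's Poisson-kernel comparison between the supremum and the spherical mean, which is where the term $\frac{3^{2n-1}}{2^{2n-3}}\log 2$ in the definition of $\epsilon_{4.5}$ actually comes from. Your bookkeeping of the three summands in the denominator of $\epsilon_{4.5}$ is therefore also misattributed: the $\log 2$ absorbs the factor-of-two ball inclusion used in parts (2)--(3), and $\frac{3^{2n-1}}{2^{2n-3}}\log 2$ is the Poisson-kernel loss in the sup-versus-mean comparison, not a ``cross-chart volume comparison.'' Since all three summands are present in the paper's $\epsilon_{4.5}$, your argument would be salvaged by replacing your core estimate with the correct two-term bound $0\le\hat\varphi^i_\delta-\varphi^i_\delta\le\nu^i(x,\delta)\bigl(C_\rho+\frac{3^{2n-1}}{2^{2n-3}}\log 2\bigr)$ and re-tracing the accounting; but as written the derivation is incomplete.
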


If Proposition \ref{Smoothing-global-error-control} is true, for small enough $\delta$, we can define $\varphi_{1.12}$ as the regularized maximum of $\varphi_{1.11}(x)+3\epsilon_{4.5}\log\delta$ on $\overline{O'}$ and $\varphi_\delta^i-\varphi_{\omega_0}^i$ on $\overline{B_{\frac{9}{5}r}(x_i)}$. Since $\nu(x)<\epsilon_{4.5}$ for $x\not\in Y$, for small enough $\delta$, $\max_{\{i:x\in \overline{B_{\frac{9}{5}r}(x_i)}\}}\nu^i(x,\delta)\le2\epsilon_{4.5}$ for all $x\not\in O''$. So by Proposition \ref{Smoothing-global-error-control} (1), we do not need to worry about the discontiuty near the boundary of $\overline{O'}$. By Proposition \ref{Smoothing-global-error-control} (2) and (3), there is also no need to worry about the discontinuity near the boundary of $\overline{B_{\frac{9}{5}r}(x_i)}$. In conclusion, $\varphi_{1.12}$ will be smooth and satisfy \[c\omega_{1.12}^{n-1}-(n-1)\chi\wedge\omega_{1.12}^{n-2}>0\] on $M$ as long as $Y$ is smooth and Proposition \ref{Smoothing-global-error-control} is true.

In order to prove Proposition \ref{Smoothing-global-error-control}, we need the following lemma of B\l{}ocki and Ko\l{}odziej \cite{BlockiKolodziej}.
\begin{lemma}
For any $\delta<\frac{r}{20}$ and $x\in\overline{B_{\frac{9}{5}r}(x_i)}$, the following estimates hold:

(1) $0\le\hat\varphi_\delta^i-\hat\varphi_{\frac{\delta}{a}}^i\le\nu^i(x,\delta)\log a$ for all $a\ge1$,

(2) $0\le\hat\varphi_\delta^i-\varphi_\delta^i\le\nu^i(x,\delta)(\int_0^1\log(\frac{1}{t})\mathrm{Vol}(\partial B_1(0))t^{2n-1}\rho(t)dt+\frac{3^{2n-1}}{2^{2n-3}}\log 2)$.
\label{Smoothing-local-error-control}
\end{lemma}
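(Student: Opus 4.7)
Both inequalities rest on the fact that $u := \varphi_{4.2} + (1-100\epsilon_{4.1})\varphi_{\omega_0}^i$ is plurisubharmonic on $B_{2r}(x_i)$: indeed $\sqrt{-1}\partial\bar\partial u = \omega_{4.2} \ge 0$ by construction. The key tool will be the classical convexity of $r \mapsto \hat\varphi_r^i(x) = \sup_{B_r^i(x)} u$ as a function of $\log r$, which holds for every PSH function by the sub-mean-value property.

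For (1), the left inequality is immediate from the monotonicity of $\hat\varphi_r^i$ in $r$. For the right inequality, by convexity in $\log r$ the slope of the secant from $(\log(\delta/a),\hat\varphi_{\delta/a}^i)$ to $(\log\delta,\hat\varphi_\delta^i)$ is bounded above by the slope of the secant from $(\log\delta,\hat\varphi_\delta^i)$ to $(\log(r/16),\hat\varphi_{r/16}^i)$; the latter is $\nu^i(x,\delta)$ by definition, so $\hat\varphi_\delta^i - \hat\varphi_{\delta/a}^i \le \nu^i(x,\delta)\log a$.

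For (2), I would substitute $y=\delta z$ in Definition \ref{Definition-smoothing} and pass to polar coordinates to rewrite
\[
\varphi_\delta^i(x) = \int_0^1 \rho(s)\,s^{2n-1}\,\mathrm{Vol}(\partial B_1(0))\,S_{\delta s}(x)\,ds,
\]
where $S_r(x)$ denotes the spherical mean of $u$ over $\partial B_r(x)$. Since the smoothing weight integrates to one,
\[
\hat\varphi_\delta^i(x) - \varphi_\delta^i(x) = \int_0^1 \rho(s)\,s^{2n-1}\,\mathrm{Vol}(\partial B_1(0))\,\bigl(\hat\varphi_\delta^i(x) - S_{\delta s}(x)\bigr)\,ds.
\]
The lower bound follows at once from $S_{\delta s}(x) \le \hat\varphi_\delta^i(x)$, valid since $\partial B_{\delta s}(x) \subset B_\delta^i(x)$. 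For the upper bound I would decompose $\hat\varphi_\delta^i - S_{\delta s} = (\hat\varphi_\delta^i - \hat\varphi_{\delta s}^i) + (\hat\varphi_{\delta s}^i - S_{\delta s})$. Part (1) controls the first summand by $\nu^i(x,\delta)\log(1/s)$, and integrating against the smoothing weight reproduces exactly the first constant $\int_0^1 \log(1/s)\,\mathrm{Vol}(\partial B_1(0))\,s^{2n-1}\rho(s)\,ds$ of the stated bound.

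The main obstacle is the residual term $\hat\varphi_{\delta s}^i - S_{\delta s}(x)$, which compares a supremum on $B_{\delta s}^i(x)$ with the spherical mean on $\partial B_{\delta s}(x)$. My plan is a sub-mean iteration: for $y \in B_{\delta s/2}^i(x)$ nearly realizing $\hat\varphi_{\delta s/2}^i$, applying the PSH sub-mean inequality on the enlarged ball $B_{(3/2)\delta s}(y) \supset B_{\delta s}(x)$ relates $u(y)$ to the volumetric average on $B_{\delta s}(x)$, with error governed by the volume ratio $(3/2)^{2n-1}$ and the factor $|B_{\delta s}|/|B_{\delta s/2}|=2^{2n}$; since the volumetric average is in turn dominated by the spherical mean through $\bar u_r \le S_r$, and since part (1) controls the dyadic defect $\hat\varphi_{3\delta s/2}^i - \hat\varphi_{\delta s/2}^i$ by a multiple of $\nu^i(x,\delta)\log 2$, a careful constant-tracking exactly of the B\l{}ocki--Ko\l{}odziej type produces the factor $\frac{3^{2n-1}}{2^{2n-3}}\log 2$. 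The subtlety, and the main difficulty to overcome, is that the PSH sub-mean property is only a one-sided estimate, so the pointwise comparison between $\hat\varphi_{\delta s}^i$ and $S_{\delta s}(x)$ cannot be made directly and requires this combinatorial accounting of geometric constants to be absorbed cleanly into $\nu^i(x,\delta)$.
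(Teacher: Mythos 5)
Part (1) is correct and matches the paper exactly: the left inequality is monotonicity, and the right inequality is the comparison of secant slopes for the logarithmically convex function $r\mapsto\hat\varphi_r^i$.

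For part (2), your reduction is sound and in fact slightly cleaner than the paper's: the paper first compares $\tilde\varphi_\delta^i$ (the spherical mean at scale $\delta$) to $\varphi_\delta^i$, and then separately adds $\hat\varphi_\delta^i-\tilde\varphi_\delta^i$, incurring the Poisson-kernel error twice and yielding the factor $\frac{3^{2n-1}}{2^{2n-3}}\log 2 = 2\cdot\frac{3^{2n-1}}{2^{2n-2}}\log 2$; your direct decomposition $\hat\varphi_\delta^i - S_{\delta s} = (\hat\varphi_\delta^i-\hat\varphi_{\delta s}^i)+(\hat\varphi_{\delta s}^i - S_{\delta s})$ only incurs it once, so if you carry it out with the right tool you would get the better constant $\frac{3^{2n-1}}{2^{2n-2}}\log 2$, which of course also implies the stated bound.

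However, the crucial estimate you need and only sketch — the bound on $\hat\varphi_{\delta s}^i - S_{\delta s}(x)$ — is where the proposal has a genuine gap. The paper, following B\l ocki--Ko\l odziej, applies the Poisson representation for subharmonic functions on the ball $B_{\delta s}(x)$ \emph{itself}: writing $v = u - \hat\varphi_{\delta s}^i \le 0$ on $B_{\delta s}(x)$ and bounding the Poisson kernel from below at points $y\in B_{\delta s/2}(x)$ yields directly
\[
\hat\varphi_{\delta s}^i - S_{\delta s}(x) \;\le\; \frac{3^{2n-1}}{2^{2n-2}}\bigl(\hat\varphi_{\delta s}^i - \hat\varphi_{\delta s/2}^i\bigr),
\]
and the right side is controlled by part (1) with $\nu^i(x,\delta s)\le\nu^i(x,\delta)$. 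This argument never leaves $B_{\delta s}(x)$. Your sub-mean iteration, by contrast, is centered at the point $y$ rather than at $x$, so the ball $B_{(3/2)\delta s}(y)$ you must integrate over is only contained in $B_{2\delta s}(x)$; to bound the part of the integral lying outside $B_{\delta s}(x)$ you are forced to invoke $\hat\varphi_{2\delta s}^i$, and hence $\nu^i(x,2\delta s)$ via part (1). But for $s>\tfrac12$ one has $2\delta s>\delta$, so monotonicity gives $\nu^i(x,2\delta s)\ge\nu^i(x,\delta)$, in the wrong direction; and $\nu^i(x,2\delta)$ need not even be defined under the hypothesis $\delta<r/20$, since $2\delta$ can exceed $r/16$. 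So the ``combinatorial accounting'' you defer cannot be made to close as stated. The fix is precisely the Poisson kernel estimate the paper cites: it keeps the comparison interior to $B_{\delta s}(x)$ and avoids the radius inflation entirely.
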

\begin{proof}
For readers' convenience, we almost line by line copy the paper \cite{BlockiKolodziej} here:

(1) It follows from the logarithmical convexity of $\hat\varphi_\delta^i$ and the definition of $\nu^i(x,\delta)$.

(2) Define another regularization $\tilde\varphi_\delta^i$ by
\[\tilde\varphi_\delta^i(x)=\frac{1}{\mathrm{Vol}(\partial B_\delta(x))}\int_{\partial B_\delta(x)}(\varphi_{4.2}+(1-100\epsilon_{4.1})\varphi_{\omega_0}^i)d\mathrm{Vol}.\] Then by the Poisson kernel for subharmonic functions \cite{BlockiKolodziej} and the estimate in (1),
\[\hat\varphi_{t\delta}^i(x)-\tilde\varphi_{t\delta}^i(x)\le\frac{3^{2n-1}}{2^{2n-2}}(\hat\varphi_{t\delta}^i-\hat\varphi_{t\delta/2}^i)\le(\frac{3^{2n-1}}{2^{2n-2}}\log 2)\nu^i(x,t\delta)\] for all $t\in (0,1]$. By monotonicity, \[\hat\varphi_{t\delta}^i(x)-\tilde\varphi_{t\delta}^i(x)\le(\frac{3^{2n-1}}{2^{2n-2}}\log 2)\nu^i(x,t\delta)\le(\frac{3^{2n-1}}{2^{2n-2}}\log 2)\nu^i(x,\delta).\] Define \[\tilde\rho(t)=\mathrm{Vol}(\partial B_1(0))t^{2n-1}\rho(t),\] then $\int_0^1\tilde\rho(t)=1$.
So
\[\tilde\varphi_\delta^i-\varphi_\delta^i=\int_0^1(\tilde\varphi_\delta^i-\tilde\varphi_{t\delta}^i)\tilde\rho(t)dt
\le\int_0^1(\hat\varphi_\delta^i-\hat\varphi_{t\delta}^i)\tilde\rho(t)dt+(\frac{3^{2n-1}}{2^{2n-2}}\log 2)\nu^i(x,\delta).\]
By the estimate in (1) again, \[\hat\varphi_\delta^i-\hat\varphi_{t\delta}^i\le\nu^i(x,\delta)\log(\frac{1}{t}).\]
The other side of inequality $0\le\hat\varphi_\delta^i-\varphi_\delta^i$ is trivial.
\end{proof}

It is easy to see that there exists a constant $C_{4.6}$ such that for any $\delta<\frac{r}{20}$ and $x\in\overline{B_{\frac{9}{5}r}(x_i)}$, $\nu^i(x,\delta)<C_{4.6}$. Now we are ready to prove Proposition \ref{Smoothing-global-error-control}.

(1) Suppose $\delta<\frac{r}{20}$, $x\in\overline{B_{\frac{9}{5}r}(x_i)}$ and \[\nu^i(x,\delta)=\frac{\hat\varphi_{\frac{r}{16}}^i(x)-\hat\varphi_\delta^i(x)}{\log(\frac{r}{16})-\log\delta}\le2\epsilon_{4.5},\] then \[\hat\varphi_\delta^i(x)\ge\hat\varphi_{\frac{r}{16}}^i(x)+2\epsilon_{4.5}(\log\delta-\log(\frac{r}{16}))\ge-C_{4.7}+2\epsilon_{4.5}\log\delta.\] By Lemma \ref{Smoothing-local-error-control} (2), \[\varphi_\delta^i(x)\ge-C_{4.8}+2\epsilon_{4.5}\log\delta.\] It is easy to see that for $\delta$ small enough, \[\sup_{\overline{O'}}\varphi_{1.11}+3\epsilon_{4.5}\log\delta+\epsilon_{4.1}r^2<\varphi_\delta^i(x)-\varphi_{\omega_0}^i(x)\] because $\varphi_{\omega_0}^i$ is uniformly bounded on $\overline{B_{\frac{9}{5}r}(x_i)}$.

(2) Suppose $\delta<\frac{r}{20}$, $x\in\overline{B_{\frac{9}{5}r}(x_i)}$ and \[\inf_{\overline{O'}}\varphi_{1.11}+3\epsilon_{4.5}\log\delta-\epsilon_{4.1}r^2\le\varphi_\delta^i(x)-\varphi_{\omega_0}^i(x),\] then as before \[\hat\varphi_\delta^i(x)\ge\varphi_\delta^i(x)\ge-C_{4.9}+3\epsilon_{4.5}\log\delta.\] By Lemma \ref{Smoothing-local-error-control} (1) and the definition of $\hat\varphi_{\frac{\delta}{2}}^i(x)$, \[\sup_{B_{\frac{\delta}{2}}^i(x)}\varphi_{4.2}\ge-C_{4.10}+3\epsilon_{4.5}\log\delta.\] If $x\in\overline{B_{\frac{9}{5}r}(x_j)}$, then $B_{\frac{\delta}{2}}^i(x)\subset B_{\delta}^j(x)$ and therefore \[\sup_{B_\delta^j(x)}\varphi_{4.2}\ge\sup_{B_{\frac{\delta}{2}}^i(x)}\varphi_{4.2}\ge-C_{4.10}+3\epsilon_{4.5}\log\delta.\] By the definition of $\hat\varphi_\delta^j(x)$ and $\nu^j(x,\delta)$, it is easy to see that $\nu^j(x,\delta)<4\epsilon_{4.5}$ if $\delta$ is small enough.

(3) Suppose $\delta<\frac{r}{20}$, $x\in(\overline{B_{\frac{9}{5}r}(x_i)}\setminus B_{\frac{8}{5}r}(x_i))\cap\overline{B_{r}(x_j)}$ and \[\max_{\{i:x\in \overline{B_{\frac{9}{5}r}(x_i)}\}}\nu^i(x,\delta)\le4\epsilon_{4.5},\] then \[\hat\varphi_{\frac{\delta}{2}}^i(x)-\varphi_{\omega_0}^i(x)\le\sup_{B_{\frac{\delta}{2}}^i(x)}\varphi_{4.2}+2\epsilon_{4.1}r^2+(2r+\delta)^2-(2r)^2-100\epsilon_{4.1}(\frac{7}{5}r)^2,\]
and \[\sup_{B_\delta^j(x)}\varphi_{4.2}\le\hat\varphi_\delta^j(x)-\varphi_{\omega_0}^j(x)+2\epsilon_{4.1}r^2+(2r+\delta)^2-(2r)^2+100\epsilon_{4.1}(\frac{6}{5}r)^2.\]
By Lemma \ref{Smoothing-local-error-control} (1),
\[\hat\varphi_\delta^i-\hat\varphi_{\frac{\delta}{2}}^i\le\nu^i(x,\delta)\log 2\le4\epsilon_{4.5}\log 2.\]
By Lemma \ref{Smoothing-local-error-control} (2), $\varphi_\delta^i\le \hat\varphi_\delta^i$ and
\[\hat\varphi_\delta^j-\varphi_\delta^j\le4\epsilon_{4.5}(\int_0^1\log(\frac{1}{t})\mathrm{Vol}(\partial B_1(0))t^{2n-1}\rho(t)dt+\frac{3^{2n-1}}{2^{2n-3}}\log 2).\]
Since $\sup_{B_{\frac{\delta}{2}}^i(x)}\varphi_{4.2}\le\sup_{B_\delta^j(x)}\varphi_{4.2}$, by summing everything together, for $\delta$ small enough, $\varphi_\delta^i(x)-\varphi_{\omega_0}^i(x)+\epsilon_{4.1}r^2<\varphi_\delta^j(x)-\varphi_{\omega_0}^j(x)$. We are done if $Y$ is smooth.

In general $Y$ is singular. By Hironaka's desingularization theorem, there exists a blow-up $\tilde M$ of $M$ obtained by a sequence of blow-ups with smooth centers such that the proper transform $\tilde Y$ of $Y$ is smooth. Without loss of generality, assume that we only need to blow up once. Let $\pi$ be the projection of $\tilde M$ to $M$. Let $E$ be the exceptional divisor. Let $s$ be the defining section of $E$. Let $h$ be any smooth metric on the line bundle $[E]$, then $\frac{\sqrt{-1}}{2\pi}\partial\bar\partial\log|s|_h^2=[E]+\omega_{4.11}$ by the Poincar\'e-Lelong equation. Then it is well known that the smooth (1,1)-form $\omega_{4.11}\in-[E]$ on $\tilde M$ and $\omega_{4.11}>-C_{4.12}\pi^*\omega_0$. For example, see Lemma 3.5 of \cite{DemaillyPaun} for the explanation. Define $\omega_{4.13}=C_{4.12}\pi^*\omega_0+\omega_{4.11}$. Then $\omega_{4.13}$ is a K\"ahler form on $\tilde M$.

\begin{lemma}
Let $C_{4.14}=\frac{6n}{\epsilon_{1.1}}$. Then for all small enough $t$ and $q$-dimensional subvarieties $V$ of $\tilde M$, as long as $q<n$, \[\begin{split}
&\int_V (c-\frac{n-q}{3n}\epsilon_{1.1})((1+C_{4.14}t)\pi^*\omega_0+C_{4.14}t^2\omega_{4.13})^q\\
&\ge\int_V q((1+C_{4.14}t)\pi^*\omega_0+C_{4.14}t^2\omega_{4.13})^{q-1}\wedge(\pi^*\chi+t^2\omega_{4.13}).
\end{split}\]
\label{Stability-on-blow-up}
\end{lemma}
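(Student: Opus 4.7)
The plan is to expand both sides of the claimed inequality as polynomials in $t$ and show that, for each fixed $V$, the lowest nonzero order of the difference $R_V(t) := \mathrm{LHS} - \mathrm{RHS}$ is nonnegative and dominates the higher-order $t$-corrections for $t$ small. Writing $\tilde\omega^t := (1+C_{4.14}t)\pi^*\omega_0 + C_{4.14}t^2\omega_{4.13}$ and $\tilde\chi^t := \pi^*\chi + t^2\omega_{4.13}$, set $p := \dim \pi(V) \le q$. Any multinomial term in $(\tilde\omega^t)^q$ or $(\tilde\omega^t)^{q-1}\wedge\tilde\chi^t$ whose pullback-from-$M$ factor has pure degree exceeding $p$ vanishes when integrated over $V$; every surviving term therefore contains at least $q-p$ wedge factors of $\omega_{4.13}$, each contributing $t^2$. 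Hence $R_V(t)$ starts at order $t^{2(q-p)}$.

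In the case $p=q$, the restriction $\pi|_V$ is generically birational onto $V' := \pi(V)$, and the constant term $R_V(0)$ is exactly the stability expression for the $q$-dimensional subvariety $V' \subset M$ with the lighter shift $\frac{n-q}{3n}\epsilon_{1.1}$ in place of $(n-q)\epsilon_{1.1}$. The stability hypothesis on $V'$ then yields
\[ R_V(0) \;\ge\; \frac{(3n-1)(n-q)}{3n}\,\epsilon_{1.1}\int_V (\pi^*\omega_0)^q \;>\; 0, \]
a strict positive margin proportional to $\int_V(\pi^*\omega_0)^q$ which dominates the $O(t)$ corrections for $t$ small, uniformly after normalizing by this integral.

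In the case $p<q$, a direct bookkeeping of the multinomial expansion yields
\[ r_{2(q-p)} \;=\; \binom{q}{p}\,C_{4.14}^{q-p}\int_V \omega_{4.13}^{q-p}\wedge\pi^* F, \qquad F := \bigl(c - \tfrac{2n-p-q}{6n}\epsilon_{1.1}\bigr)\omega_0^p - p\,\omega_0^{p-1}\wedge\chi. \]
Here the coefficient $\frac{2n-p-q}{6n}$ arises from combining the ambient shift $\frac{n-q}{3n}\epsilon_{1.1}$ with the contribution $\frac{q-p}{C_{4.14}}$ from the $t^2\omega_{4.13}$ piece of $\tilde\chi^t$, which matches precisely because $C_{4.14} = 6n/\epsilon_{1.1}$. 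By the projection formula, $r_{2(q-p)}$ is proportional to $\int_M T\wedge F$, where $T := \pi_*([V]\wedge\omega_{4.13}^{q-p})$ is a positive closed $(n-p,n-p)$-current on $M$. The stability hypothesis on $M$ gives $\int_W F \ge 0$ for every $p$-dimensional subvariety $W$, with uniform positive margin since $(n-p) - \frac{2n-p-q}{6n} > 0$ for $q<n$.

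The main obstacle is to promote this subvariety-level nonnegativity to the current-theoretic inequality $\int_M T\wedge F \ge 0$. I would handle this via Siu decomposition $T = \sum_k \lambda_k[V_k] + R$ of the positive closed current $T$ into its Lelong support (each $V_k$ handled directly by stability) and a residual part of zero Lelong numbers, the latter controlled via Demailly's regularization, exploiting that the $\epsilon_{1.1}$-margin in $F$ survives the approximation; alternatively, expand $[\omega_{4.13}]^{q-p} = (C_{4.12}\pi^*[\omega_0] - [E])^{q-p}$ cohomologically and use adjunction on $E$ to express $T$ as an effective combination of $p$-dimensional classes on $M$, reducing directly to stability. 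Finally, once $r_{2(q-p)} \ge 0$ is established with a uniform lower bound after normalizing (for instance by $\int_V\omega_{4.13}^q = 1$ so that all intersection numbers on $V$ are uniformly bounded), the higher-order $t$-corrections are uniformly dominated for $t$ below a threshold independent of $V$, yielding the lemma.
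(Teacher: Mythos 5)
Your approach is genuinely different from the paper's. The paper proves the integrated inequality by first observing that the $t^0$ contribution $\int_{\pi(V)}(c-\tfrac{\epsilon_{1.1}}{3})\omega_0^q-q\omega_0^{q-1}\wedge\chi$ is $\ge0$ by hypothesis, then replacing $\omega_0$ by a cohomologous representative $\omega_{4.19}$ which is (via the induction hypothesis applied to $\pi(E)$ and a regularized-max gluing with a $\log|s|_h^2$-potential) a pointwise strict sub-solution near $\pi(E)$. With this representative, the difference $A_q - A_0$ of the two integrands is shown to be nonnegative \emph{pointwise} on $\tilde M$ for $t$ below a threshold independent of the point: near $\pi^{-1}(O_{4.20})$ by the sub-solution property term-by-term, and away from it because the only $O(t)$ term, $q(\tilde\omega^t)^{q-1}\wedge C_{4.14}t\,\pi^*\chi$, is positive with a uniform lower bound and dominates the $O(t^2)$ remainder. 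Because the inequality is pointwise, integrating over any $V$ is immediate, and uniformity in $V$ is automatic.

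Your integrated expansion in $t$ is calculationally correct (I checked the coefficient $\tfrac{2n-p-q}{6n}\epsilon_{1.1}$, which indeed combines $\tfrac{n-q}{3n}\epsilon_{1.1}$ with $\tfrac{q-p}{C_{4.14}}$ via $C_{4.14}=6n/\epsilon_{1.1}$), but there are two genuine gaps. First, for $p<q$ you need $\int_M T\wedge F\ge0$ for the positive closed $(n-p,n-p)$-current $T=\pi_*([V]\wedge\omega_{4.13}^{q-p})$; the Siu decomposition $T=\sum_k\lambda_k[V_k]+R$ with Demailly regularization does not finish, because $F$ is not pointwise nonnegative, so nothing controls $\int_M R\wedge F$. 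What actually closes this is the support theorem: $T$ is supported on the $p$-dimensional analytic set $\pi(V)$, so it is a nonnegative combination $\sum_k\lambda_k[W_k]$ of its $p$-dimensional irreducible components and the subvariety-level stability applies directly; you did not invoke this, and the cohomological expansion $(C_{4.12}\pi^*[\omega_0]-[E])^{q-p}$ has sign-indefinite terms, so it is not obviously an ``effective combination.'' Second, and more seriously, the uniformity of the $t$-threshold in $V$ is not established. Your strict margin in $r_{2(q-p)}$ is proportional to $\int_V\omega_{4.13}^{q-p}\wedge\pi^*\omega_0^p$, which can be arbitrarily small relative to $\int_V\omega_{4.13}^q$ (for $V$ nearly tangent to exceptional directions), so the normalization $\int_V\omega_{4.13}^q=1$ bounds the higher-order coefficients $r_j$ but not by $r_{2(q-p)}$, and no uniform $t_0$ falls out. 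The paper's pointwise formulation sidesteps both issues simultaneously, at the cost of having to construct and glue the sub-solution $\omega_{4.19}$; that construction (and its reliance on the induction hypothesis for $\pi(E)$) is the mechanism you are missing.
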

\begin{proof}
By assumption,
\[\int_V (c-\frac{\epsilon_{1.1}}{3})\pi^*\omega_0^q-q\pi^*\omega_0^{q-1}\wedge\pi^*\chi=\int_{\pi(V)} (c-\frac{\epsilon_{1.1}}{3})\omega_0^q-q\omega_0^{q-1}\wedge\chi\ge0.\]
So \[\int_V (c-\frac{\epsilon_{1.1}}{3})((1+C_{4.14}t)\pi^*\omega_0)^q-q((1+C_{4.14}t)\pi^*\omega_0)^{q-1}\wedge((1+C_{4.14}t)\pi^*\chi)\ge0.\]
It suffices to show that \[\begin{split}
&\int_V (c-\frac{\epsilon_{1.1}}{3})((1+C_{4.14}t)\pi^*\omega_0+C_{4.14}t^2\omega_{4.13})^q\\
&-q((1+C_{4.14}t)\pi^*\omega_0+C_{4.14}t^2\omega_{4.13})^{q-1}\wedge(\pi^*\chi+t^2\omega_{4.13})\\
&\ge\int_V (c-\frac{\epsilon_{1.1}}{3})((1+C_{4.14}t)\pi^*\omega_0)^q\\
&-q((1+C_{4.14}t)\pi^*\omega_0)^{q-1}\wedge((1+C_{4.14}t)\pi^*\chi).
\end{split}\]

Since it only depends on the cohomology classes, we want to replace $\omega_0$ by a better representative in its cohomology class. Remark that $\pi(E)$ is smooth by assumption. So we can apply Theorem \ref{Main-induction} to $\pi(E)$. As in Section 1, there exists a smooth function $\varphi_{4.15}$ on a neighborhood $O_{4.16}$ of $\pi(E)$ in $M$ such that $\omega_{4.15}=\omega_0+\sqrt{-1}\partial\bar\partial\varphi_{4.15}$ satisfies \[(c-\frac{\epsilon_{1.1}}{2})\omega_{4.15}^{n-1}-(n-1)\chi\wedge\omega_{4.15}^{n-2}>0\] on $O_{4.16}$. Define $\varphi_{4.17}=\pi_*\frac{\log|s|_h^2}{4\pi C_{4.12}}$ on $M\setminus\pi(E)$. Recall the definition of the regularized maximum in Lemma I.5.18 of \cite{Demailly}. For large enough $C_{4.18}$, let $\varphi_{4.19}$ be the regularized maximum of $\varphi_{4.17}+C_{4.18}$ and $\varphi_{4.15}$. Then $\varphi_{4.19}$ is smooth on $M$ and $\omega_{4.19}=\omega_0+\sqrt{-1}\partial\bar\partial\varphi_{4.19}>0$ on $M$. Moreover, there exists a smaller neighborhood $O_{4.20}$ of $\pi(E)$ such that $\varphi_{4.19}=\varphi_{4.15}$ on $O_{4.20}\subset O_{4.16}$.

After replacing $\omega_0$ by $\omega_{4.19}$, it suffices to show that
\[\begin{split}
&(c-\frac{\epsilon_{1.1}}{3})\sum_{i=1}^{q}\frac{q!}{i!(q-i)!}((1+C_{4.14}t)\pi^*\omega_{4.19})^{q-i}\wedge(C_{4.14}t^2\omega_{4.13})^i\\
-&q\sum_{i=1}^{q}\frac{(q-1)!}{(q-i)!(i-1)!}((1+C_{4.14}t)\pi^*\omega_{4.19})^{q-i}\wedge C_{4.14}^{i-1}(t^2\omega_{4.13})^i\\
-&q\sum_{i=1}^{q-1}\frac{(q-1)!}{i!(q-1-i)!}((1+C_{4.14}t)\pi^*\omega_{4.19})^{q-1-i}\\
&\wedge(C_{4.14}t^2\omega_{4.13})^i\wedge(1+C_{4.14}t)\pi^*\chi\\
+&q((1+C_{4.14}t)\pi^*\omega_{4.19}+C_{4.14}t^2\omega_{4.13})^{q-1}\wedge C_{4.14}t\pi^*\chi\\
\ge&0.
\end{split}\]
By definition of $C_{4.14}$, \[q\frac{(q-1)!}{(q-i)!(i-1)!}<\frac{\epsilon_{1.1}}{6}\frac{q!}{i!(q-i)!}C_{4.14}\] for all $i=1, 2, ..., q$. So we can combine the first term and the second term.
If the point is inside $\pi^{-1}(O_{4.20})$, then for all $i=1, 2, ..., q-1$, \[(c-\frac{\epsilon_{1.1}}{2})(\pi^*\omega_{4.19})^{q-i}\ge(q-i)(\pi^*\omega_{4.19})^{q-1-i}\wedge\pi^*\chi\] because \[(c-\frac{\epsilon_{1.1}}{2})\omega_{4.19}^{n-1}\ge(n-1)\omega_{4.19}^{n-2}\wedge\chi\] on $O_{4.20}$. So the sum of the first three terms is non-negative if $i=1,2,...,q-1$. So we are done because the $i=q$ term and the fourth term are non-negative.
If the point is outside $\pi^{-1}(O_{4.20})$, then there exists $C_{4.21}$ such that \[C_{4.21}\pi^*\chi>\omega_{4.13}>C_{4.21}^{-1}\pi^*\chi\] and \[C_{4.21}\pi^*\chi>\pi^*\omega_{4.19}>C_{4.21}^{-1}\pi^*\chi\] on $\overline{\tilde M\setminus \pi^{-1}(O_{4.20})}$. The only first order term in $t$ is \[q\pi^*\omega_{4.19}^{q-1}\wedge C_{4.14}t\pi^*\chi.\] Since it is positive, for small enough $t$, we also get the required inequality.
\end{proof}

Now we pick $t>0$ such that $t$ satisfies Lemma \ref{Stability-on-blow-up} and \[\frac{c}{1+C_{4.14}t+C_{4.12}C_{4.14}t^2}>\max\{c-\frac{\epsilon_{1.1}}{4n},\frac{c}{1+\epsilon_{4.1}}\}.\]
We apply Theorem \ref{Main-induction} to the lower dimensional smooth manifold $\tilde Y$ with the K\"ahler forms $(1+C_{4.14}t)\pi^*\omega_0+C_{4.14}t^2\omega_{4.13}$ and $\pi^*\chi+t^2\omega_{4.13}$. As in Section 1, there exists a smooth function $\varphi_{4.22}$ on a neighborhood of $\tilde Y$ in $\tilde M$ such that \[\omega_{4.22}=(1+C_{4.14}t)\pi^*\omega_0+C_{4.14}t^2\omega_{4.13}+\sqrt{-1}\partial\bar\partial\varphi_{4.22}\] satisfies \[(c-\frac{\epsilon_{1.1}}{4n})\omega_{4.22}^{n-1}-(n-1)(\pi^*\chi+t^2\omega_{4.13})\wedge\omega_{4.22}^{n-2}>0\] near $\tilde Y$. Similarly, let $\varphi_{4.23}$ be the potential near $E$. For large enough constant $C_{4.24}$, define \[\varphi_{4.25}=\tilde\max\{\varphi_{4.23},\varphi_{4.22}+C_{4.24}^{-1}\pi^*\varphi_{4.17}+C_{4.24}\}\] and \[\omega_{4.25}=(1+C_{4.14}t)\pi^*\omega_0+C_{4.14}t^2\omega_{4.13}+\sqrt{-1}\partial\bar\partial\varphi_{4.25}.\]
Then \[(c-\frac{\epsilon_{1.1}}{4n})\omega_{4.25}^{n-1}-(n-1)(\pi^*\chi+t^2\omega_{4.13})\wedge\omega_{4.25}^{n-2}>0\] on a neighborhood $O$ of $\tilde Y\cup E$ in $\tilde M$. Since $t^2\omega_{4.13}>0$, it is easy to see that \[(c-\frac{\epsilon_{1.1}}{4n})(\pi_*\omega_{4.25})^{n-1}-(n-1)\chi\wedge(\pi_*\omega_{4.25})^{n-2}>0\] on $\pi(O\setminus E)$. Now we choose neighborhoods $O'$ and $O''$ of $Y\cup\pi(E)$ in $M$ such that $\overline{O'}\subset\pi(O)$ and $\overline{O''}\subset O'$. Then as before, for small enough $\delta$, we can define $\varphi_{4.26}$ as the regularized maximum of $\pi_*\varphi_{4.25}+3\epsilon_{4.5}\log\delta$ on $\overline{O'}\setminus\pi(E)$ and $\varphi_\delta^i-\varphi_{\omega_0}^i$ on $\overline{B_{\frac{9}{5}r}(x_i)}$. Then $\varphi_{4.26}$ is smooth and bounded on $M\setminus\pi(E)$. Moreover, for \[\begin{split}
\omega_{4.26}&=(1+C_{4.14}t)\omega_0+C_{4.14}t^2\pi_*\omega_{4.13}+\sqrt{-1}\partial\bar\partial\varphi_{4.26}\\
&=(1+C_{4.14}t+C_{4.12}C_{4.14}t^2)\omega_0+C_{4.14}t^2\pi_*\omega_{4.11}+\sqrt{-1}\partial\bar\partial\varphi_{4.26},
\end{split}\]
it is easy to see that \[(\max\{c-\frac{\epsilon_{1.1}}{4n},\frac{c}{1+\epsilon_{4.1}}\})\omega_{4.26}^{n-1}-(n-1)\chi\wedge\omega_{4.26}^{n-2}>0\] on $M\setminus\pi(E)$ because $C_{4.14}t\omega_0+C_{4.14}t^2\pi_*\omega_{4.13}>0$. Now we define \[\omega_{4.27}=\frac{\omega_{4.26}}{1+C_{4.14}t+C_{4.12}C_{4.14}t^2}=\omega_0+\frac{C_{4.14}t^2\pi_*\omega_{4.11}+\sqrt{-1}\partial\bar\partial\varphi_{4.26}}{1+C_{4.14}t+C_{4.12}C_{4.14}t^2},\] then by the choice of $t$,
\[c\omega_{4.27}^{n-1}-(n-1)\chi\wedge\omega_{4.27}^{n-2}>0\] on $M\setminus\pi(E)$. For large enough constant $C_{4.28}$, define \[\varphi_{4.29}=\tilde\max\{\frac{\frac{C_{4.14}t^2}{2\pi}\pi_*\log|s|_h^2+\varphi_{4.26}}{1+C_{4.14}t+C_{4.12}C_{4.14}t^2}+C_{4.28},\varphi_{4.15}\},\] then $\varphi_{4.29}$ is smooth on $M$ and $\omega_{4.29}=\omega_0+\sqrt{-1}\partial\bar\partial\varphi_{4.29}$ satisfies \[c\omega_{4.29}^{n-1}-(n-1)\chi\wedge\omega_{4.29}^{n-2}>0\] on $M$. We are done.

\section{Deformed Hermitian-Yang-Mills Equation}

In this section, we prove Theorem \ref{Deformed-Hermitian-Yang-Mills}.
The equation \[\sum_{i=1}^{n}\arctan\lambda_i=\hat\theta\] for eigenvalues $\lambda_i$ of $\omega_\varphi=\omega_0+\sqrt{-1}\partial\bar\partial\varphi>0$ with respect to $\chi$ is the same as the equation
\[\sum_{i=1}^{n}\arctan(\frac{1}{\lambda_i})=\frac{n\pi}{2}-\hat\theta.\]
To simplify the notations, define $\theta_0=\frac{n\pi}{2}-\hat\theta$.
Then the equation is equivalent to the inequality \[\sum_{i\not=j}\arctan(\frac{1}{\lambda_i})<\theta_0\] for $j=1, 2, ... ,n$ and the equation
\[\mathrm{Im}(\omega_\varphi+\sqrt{-1}\chi)^n=\tan(\theta_0)\mathrm{Re}(\omega_\varphi+\sqrt{-1}\chi)^n.\] Inspired by the work of Pingali in the toric case \cite{Pingali}, the analogy of Theorem \ref{Main-induction} is the following:

\begin{theorem}
Fix a K\"ahler manifold $M^n$ with K\"ahler metrics $\chi$ and $\omega_0$. Let $\theta_0\in(0,\frac{\pi}{4})$ be a constant and $f>-\frac{1}{100n}$ be a smooth function satisfying \[\int_M f\frac{\chi^n}{n!}=\int_M(\frac{\tan(\theta_0)\mathrm{Re}(\omega_\varphi+\sqrt{-1}\chi)^n}{n!}-\frac{\mathrm{Im}(\omega_\varphi+\sqrt{-1}\chi)^n}{n!})\ge 0,\] then there exists a smooth function $\varphi$ satisfying the equation
\[\mathrm{Im}(\omega_\varphi+\sqrt{-1}\chi)^n+f\chi^n=\tan(\theta_0)\mathrm{Re}(\omega_\varphi+\sqrt{-1}\chi)^n\] and the inequality \[\sum_{i\not=j}\arctan(\frac{1}{\lambda_i})<\theta_0\] for $j=1, 2, ... ,n$ and eigenvalues $\lambda_i$ of $\omega_\varphi=\omega_0+\sqrt{-1}\partial\bar\partial\varphi>0$ with respect to $\chi$ if there exists a constant $\epsilon_{1.1}>0$ and for all $p$-dimensional subvarieties $V$ with $p=1,2,...,n$ ($V$ can be chosen as $M$), there exist smooth functions $\theta_V(t)$ from $[1,\infty)$ to $[\frac{p\pi}{2}-\theta_0+(n-p)\epsilon_{1.1},\frac{p\pi}{2})$ such that for all $t\in[1,\infty)$,
\[\int_V(\chi+\sqrt{-1}t\omega_0)^p\not=0, \theta_V(t)=\mathrm{arg}(\int_V(\chi+\sqrt{-1}t\omega_0)^p), \lim_{t\rightarrow\infty}\theta_V(t)=\frac{p\pi}{2}.\]
\label{Induction-defomed-Hermitian-Yang-Mills}
\end{theorem}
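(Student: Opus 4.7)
The plan is to prove Theorem~\ref{Induction-defomed-Hermitian-Yang-Mills} by induction on $n$, following the three-step structure of the proof of Theorem~\ref{Main-induction} in Sections~2--4 essentially verbatim. The base cases $n=1$ (trivial) and $n=2$ (by Jacob-Yau's solution of DHYM on surfaces \cite{JacobYau}) are taken for granted; fix $n\ge 3$ and assume Theorem~\ref{Induction-defomed-Hermitian-Yang-Mills} holds in all strictly smaller dimensions.

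\emph{Step~1} establishes the analog of Theorem~\ref{Solvability-assuming-sub-solution}: if $\omega_0$ already satisfies the sub-solution inequality $\sum_{i\ne j}\arctan(\lambda_i^{-1})<\theta_0$ pointwise, then the DHYM equation with $f$ is solvable. I would run the two-stage continuity method of Section~2 and prove the $C^2$ estimate of Proposition~\ref{C2-estimate} with $\tilde\Delta$ replaced by the linearization of $\omega\mapsto\sum\arctan(\lambda_i^{-1})-f\chi^n/\omega^n$. The geometric input is that $\theta_0<\pi/4$ places us in the hypercritical phase, where the operator is concave along sub-solutions by \cite{CollinsJacobYau}, so the cross-terms in the Laplacian-of-trace calculation cancel just as in Section~2. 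The $C^0$ bound and Evans-Krylov estimate then transfer with only notational changes, with the bound $f>-1/(100n)$ playing the role of $f>-(1/(2n))(1/c)^{n-1}$ from Section~2 to guarantee strict uniform ellipticity via a DHYM analog of Remark~\ref{Nondegeneracy}.

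\emph{Step~2} gives the analog of Theorem~\ref{Current-sub-solution}. Lift to $M\times M$ via $\omega_{0,M\times M,t}=\pi_1^*\omega_t+\cot(\theta_\mu)\pi_2^*\chi$ with $\theta_\mu>0$ small enough that $\theta_0+n\theta_\mu<\pi/4$; the $2n$ product eigenvalues $(\lambda_1,\ldots,\lambda_n,\cot\theta_\mu,\ldots,\cot\theta_\mu)$ satisfy the DHYM sub-solution condition with angle $\theta_0^{M\times M}=\theta_0+n\theta_\mu$, since removing any single eigenvalue leaves the sum bounded by either $\theta_0+(n-1)\theta_\mu$ (removing a $\cot\theta_\mu$-eigenvalue) or $\theta_0+n\theta_\mu$ (removing a $\lambda_j$-eigenvalue, by the hypothesized sub-solution on $M$). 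Applying Step~1 on $M\times M$ then produces a smooth solution with a small inhomogeneous term $f_{t,\epsilon_{1.6},\epsilon_{1.7}}$ forcing mass concentration on the diagonal; pushing forward $(\pi_1)_*(\omega_{t,\epsilon}^n\wedge\pi_2^*\chi)$, subtracting the diagonal mass, and truncating exactly as in Section~3 yields $\omega_{1.5}\in[\omega_0-\epsilon_{1.4}\chi]$. The key inequality replacing Lemma~\ref{Subadditive-submatrix} is a subadditivity statement for the partial-arctangent-sum functional under Schur complements of positive Hermitian block matrices; in the hypercritical regime it can be extracted from the determinant factorization $\det(\text{full})=\det(A-CB^{-1}\bar C^T)\det(B)$ combined with Weyl interlacing applied to the imaginary parts after passing to $\mathrm{Re}(\omega+\sqrt{-1}\chi)^n$ and $\mathrm{Im}(\omega+\sqrt{-1}\chi)^n$.

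\emph{Step~3} is the regularization argument of Section~4. The stability input $\theta_V(t)\ge p\pi/2-\theta_0+(n-p)\epsilon_{1.1}$ replaces the J-stability inequality, and on each smooth $p$-dimensional subvariety $V$ with $p<n$ the induction hypothesis applies with angle $\theta_0-(n-p)\epsilon_{1.1}/2\in(0,\pi/4)$ to give a smooth sub-solution on $V$, extended to a tube by a quadratic normal penalty. Siu's theorem identifies the positive Lelong locus $Y=\{\nu\ge\epsilon_{4.5}\}$ as a subvariety, Hironaka resolves it to a smooth $\tilde Y$ on a blow-up $\tilde M$, and the B\l{}ocki-Ko\l{}odziej regularized maximum glues the tubular sub-solution to the smoothed $\omega_{1.5}$ outside $Y$, producing a smooth global sub-solution on $M$; Step~1 then closes the induction. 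The main obstacle I anticipate is Step~2: unlike the rational trace $\sum\lambda_i^{-1}$, the angle functional $\sum\arctan(\lambda_i^{-1})$ does not decompose transparently across block structures, and establishing the needed subadditivity in the hypercritical range will require quantitative use of $\theta_0<\pi/4$ rather than a direct matrix analog of Lemma~\ref{Subadditive-submatrix}.
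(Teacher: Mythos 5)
Your high-level architecture matches the paper's: a three-step scheme (sub-solution implies solvability; diagonal concentration on $M\times M$ producing a singular sub-solution; Lelong-number regularization plus Hironaka resolution), with the stability hypothesis $\theta_V(1)\ge p\pi/2-\theta_0+(n-p)\epsilon_{1.1}$ feeding the induction. However, there are two genuine gaps in the technical steps, and they sit exactly where the hard work lies.

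In Step~1, you propose to re-run the two-stage continuity path of Section~2 and redo the $\tilde\Delta(\log\mathrm{tr})$ computation of Proposition~\ref{C2-estimate} with a modified linearized operator, asserting that ``the cross-terms cancel just as in Section~2.'' This is not what happens, and the claim is unsupported: the operator $\sum_i\arctan(\lambda_i^{-1})$ is not a rational symmetric function of the eigenvalues, and the miraculous Cauchy--Schwarz cancellation in Section~2 uses the specific algebraic form of $\mathrm{tr}_{\omega}\chi+f\chi^n/\omega^n$. The paper instead verifies the structural hypotheses of Sz\'ekelyhidi's general estimates (monotonicity, concavity \emph{only on} the level set $F=0$, ordered partial derivatives, boundary negativity of $F$, boundedness of level sets; see Proposition~\ref{Property-of-F}) and then cites \cite{Szekelyhidi} with three small modifications, plus a three-stage (not two-stage) continuity path because the natural starting data $\omega_0=\cot(\theta_0/n)\chi$, $f=0$ must be connected to the given $(\tilde\omega_0,\tilde f)$ through two intermediate rescalings. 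Your version of this step, as written, would not close.

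In Step~2 you correctly identify $\det\begin{bmatrix}A&C\\\bar C^T&B\end{bmatrix}=\det(A-CB^{-1}\bar C^T)\det(B)$ as the starting point but then propose to finish via ``Weyl interlacing applied to the imaginary parts.'' That is the wrong tool: the arctan-sum is not additive over eigenvalues of a sum of matrices, so interlacing of eigenvalues of $\mathrm{Re}$ and $\mathrm{Im}$ parts does not produce the Schur-complement inequality $P_I(A-CB^{-1}\bar C^T)+Q_I(B)\le P_I(\text{full})$. The paper's Lemma~\ref{Subadditive-submatrix-defomed-Hermitian-Yang-Mills} instead introduces an interpolation parameter $s\in[0,1]$ in the off-diagonal blocks, factors $\det(A+\sqrt{-1}I,sC;s\bar C^T,B+\sqrt{-1}I)$, expands $(B+\sqrt{-1}I)^{-1}$ as a Neumann series (using $B>I$), and tracks $\arg\det$ continuously in $s$ to compare $s=0$ and $s=1$; the hypothesis that the full matrix exceeds $I$ is used to keep the real and imaginary parts of $A-s^2C(B+\sqrt{-1}I)^{-1}\bar C^T+\sqrt{-1}I$ each bounded below by $I$, which makes the branch of the argument unambiguous. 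Your instinct that this lemma is ``the main obstacle'' and requires quantitative use of $\theta_0<\pi/4$ is right, but you would need to replace the Weyl-interlacing idea by this continuity-of-argument argument (or an equivalent).

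A smaller point: the paper's base case is $n=1$ only (trivial); $n=2$ is already handled by the general induction, and invoking Jacob--Yau for $n=2$ is neither what the paper does nor obviously sufficient, since \cite{JacobYau} proves uniqueness, not this existence-with-uniform-stability statement.
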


When $n=1$, it is trivial. In higher dimensions, we need to prove it by induction.

Inspired by the work of Collins-Jacob-Yau \cite{CollinsJacobYau}, the analogy of Theorem \ref{Solvability-assuming-sub-solution} is the following:

\begin{theorem}
Fix a K\"ahler manifold $M^n$ with K\"ahler metrics $\chi$ and $\omega_0$. Let $\theta_0\in(0,\frac{\pi}{4})$ be a constant and $f>-\frac{1}{100n}$ be a smooth function satisfying \[\int_M f\frac{\chi^n}{n!}=\int_M(\frac{\tan(\theta_0)\mathrm{Re}(\omega_\varphi+\sqrt{-1}\chi)^n}{n!}-\frac{\mathrm{Im}(\omega_\varphi+\sqrt{-1}\chi)^n}{n!})\ge 0,\] then there exists a smooth function $\varphi$ satisfying the equation
\[\mathrm{Im}(\omega_\varphi+\sqrt{-1}\chi)^n+f\chi^n=\tan(\theta_0)\mathrm{Re}(\omega_\varphi+\sqrt{-1}\chi)^n\] and the inequality \[\sum_{i\not=j}\arctan(\frac{1}{\lambda_i})<\theta_0\] for $j=1, 2, ... ,n$ and eigenvalues $\lambda_i$ of $\omega_\varphi=\omega_0+\sqrt{-1}\partial\bar\partial\varphi>0$ with respect to $\chi$ if \[\sum_{i\not=j}\arctan(\frac{1}{\lambda_{i,0}})<\theta_0\] for $j=1, 2, ... ,n$ and eigenvalues $\lambda_{i,0}$ of $\omega_0>0$ with respect to $\chi$.
\label{Estimate-defomed-Hermitian-Yang-Mills}
\end{theorem}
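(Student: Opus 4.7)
The plan is to adapt the two-step continuity method of Section 2 to the deformed Hermitian-Yang-Mills (dHYM) setting. In terms of the eigenvalues $\lambda_i$ of $\omega_\varphi$ with respect to $\chi$, writing $\Theta=\sum_i\arctan(1/\lambda_i)$ and $R=\prod_i\sqrt{1+\lambda_i^2}$, the equation is equivalent to
\[R\sin(\Theta-\theta_0)=-f\cos\theta_0,\]
so the subsolution hypothesis $\sum_{i\neq j}\arctan(1/\lambda_{i,0})<\theta_0$ for every $j$ encodes the $j$-th leaf inequality $\Theta-\arctan(1/\lambda_j)<\theta_0$ at $\omega_0$, which is the classical subsolution condition of Collins-Jacob-Yau. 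First I would run a continuity path $\chi_t=t\chi+(1-t)a\omega_0$ with $a$ small enough that $(n-1)\arctan a<\theta_0$, together with a matching $f_t$ adjusted so the integrability identity of the theorem holds at every $t$; at $t=0$ the trivial choice $\omega_\varphi=\omega_0$ satisfies both the equation and the subsolution inequality. A second continuity path then deforms $f_t$ from its terminal form to the given $f$, exactly as in Section 2.

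Openness along the path follows from the implicit function theorem once the linearization is shown to be elliptic. A direct calculation in coordinates diagonalizing $\chi$ and $\omega_\varphi$ shows that the derivative of the operator in the eigenvalue $\lambda_j$ direction equals
\[\frac{-R\sin(\Theta-\arctan(1/\lambda_j)-\theta_0)}{\cos\theta_0\sqrt{1+\lambda_j^2}},\]
which is strictly positive precisely when the $j$-th subsolution inequality holds. Hence the linearized operator is uniformly elliptic along the path, and the set of $t$'s for which a smooth solution preserving the subsolution condition exists is open.

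The main work is closedness, which by standard elliptic regularity reduces to the \emph{a priori} estimate $\mathrm{tr}_\chi\omega_\varphi\leq C e^{C(\varphi-\inf\varphi)}$, the $C^0$ bound, and the Evans-Krylov $C^{2,\alpha}$ bound. I would mimic the proof of Proposition 2.1: apply the linearized operator $\tilde\Delta$ to $\log\mathrm{tr}_\chi\omega_\varphi$, differentiate the dHYM equation twice at a point with $\chi$-normal coordinates, commute derivatives via the K\"ahler identities, and absorb the first-order terms by Cauchy-Schwarz against the subsolution gap. The $C^0$ bound of Proposition 2.2 then carries over verbatim since it only used the differential inequality, and the Evans-Krylov step requires checking the uniform concavity of the dHYM operator in the Hessian variables, standard for the principal branch of $\arctan$. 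The principal obstacle is the $C^2$ estimate: compared to the J-equation, differentiating $R\sin(\Theta-\theta_0)$ produces cross terms coupling different eigenvalues through $R$ and through the trigonometric factor $\cos(\Theta-\theta_0)$ which have no analogue in Section 2. Keeping these terms under control is precisely where the restriction $\theta_0\in(0,\pi/4)$ enters, since it ensures $\Theta<\pi/2+\theta_0$ and hence $\cos(\Theta-\theta_0)$ stays uniformly away from zero, while $f>-1/(100n)$ prevents the $f$-perturbation from destroying the concavity and positivity needed in the absorption argument.
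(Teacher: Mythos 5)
Your overall skeleton (continuity method, openness via the sign of the linearized coefficient, closedness via a priori estimates, then Evans--Krylov) agrees with the paper in spirit, and your rewriting of the equation as $R\sin(\Theta-\theta_0)=-f\cos\theta_0$ and the identification of the $j$-th subsolution inequality with positivity of the $j$-th eigenvalue direction of the linearization is correct. However, the core of the proof is the a priori estimate, and here your strategy diverges from the paper and contains a genuine gap. You propose to carry over the Song--Weinkove-type maximum-principle computation of Proposition~\ref{C2-estimate} verbatim: apply $\tilde\Delta$ to $\log\operatorname{tr}_\chi\omega_\varphi$, differentiate the equation twice, and ``absorb the first-order terms by Cauchy--Schwarz against the subsolution gap,'' arriving at an estimate of the form $\operatorname{tr}_\chi\omega_\varphi\le Ce^{C(\varphi-\inf\varphi)}$, from which the $C^0$ bound follows as in Proposition~\ref{C0-estimate}. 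This is not what the paper does, and it is not clear it can be made to work. The Song--Weinkove mechanism exploits the specific algebraic structure of the J-operator $\sum 1/\lambda_i$ (a Hessian-quotient type operator), and the absorption of the bad first-order terms rests on delicate identities like $g_{i\bar i,k}=g_{k\bar i,i}$ feeding into a Cauchy--Schwarz estimate weighted by $1/\lambda_i^2$. For the dHYM operator the corresponding weights involve $1/(1+\lambda_i^2)$ and the cross terms you acknowledge (from $R$ and from the trigonometric factor) are of a fundamentally different shape; no exponential-in-$\varphi$ estimate of this kind is known for dHYM, and you give no computation showing it holds. Asserting that $\theta_0\in(0,\pi/4)$ and $f>-1/(100n)$ ``keep these terms under control'' is a hope, not an argument.

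The paper instead reformulates the problem into Sz\'ekelyhidi's abstract framework for fully nonlinear equations on Hermitian manifolds: it defines $F(f,\lambda)=\sin(\theta_0-\Theta)-f\cos\theta_0/R$ on the cone $\Gamma$ and verifies the structural hypotheses (Proposition~\ref{Property-of-F}), then follows Sz\'ekelyhidi's proof line by line, tracking the extra $f$-dependent terms. Crucially, this only yields $|\sqrt{-1}\partial\bar\partial\varphi|_\chi\le C(1+\sup_M|\nabla\varphi|_\chi^2)$; because $\Gamma$ does \emph{not} contain the positive orthant, Sz\'ekelyhidi's own step from there to a true $C^2$ bound does not apply, and the paper invokes Proposition~5.1 of Collins--Jacob--Yau (their blow-up/gradient argument tailored to the dHYM cone) to close that gap. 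The $C^0$ bound is also obtained differently, via the Alexandroff--Bakelman--Pucci-type argument in Sz\'ekelyhidi's paper rather than by iterating an exponential $C^2$ estimate. These two ingredients --- the $F$-structural verification and the Collins--Jacob--Yau gradient step --- are the actual content of the proof and are missing from your proposal. (A minor point: the paper uses three continuity paths, all fixing $\chi$ and moving $\omega_0$ and $f$, starting from $\omega_0=\cot(\theta_0/n)\chi$; your first path $\chi_t=t\chi+(1-t)a\omega_0$ moves $\chi$ instead. This is not fatal, but moving $\chi$ changes the eigenvalues and the cone $\Gamma$ and adds bookkeeping you did not address.)
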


We will use the continuity method three times to prove Theorem \ref{Estimate-defomed-Hermitian-Yang-Mills}. Let $\tilde\omega_0$ be the form $\omega_0$ and $\tilde f$ be the function $f$ in Theorem \ref{Estimate-defomed-Hermitian-Yang-Mills}. There exists a constant $C_{5.1}$ such that $\tilde\omega_0\ge C_{5.1}\chi$. We start from $f=0$ and $\omega_0=\cot(\frac{\theta_0}{n})\chi$. In this case, $\varphi=0$ is the solution. Then we let $\omega_0=t\cot(\frac{\theta_0}{n})\chi+(1-t)(C_{5.1}^{-1}\cot(\frac{\theta_0}{n})+1)\tilde\omega_0$ and $f$ be the non-negative constant satisfying the integrability condition as the first path. It will imply the result for $\omega_0=(C_{5.1}^{-1}\cot(\frac{\theta_0}{n})+1)\tilde\omega_0$. Then we let $\omega_0=t\tilde\omega_0$ and $f$ be the constant satisfying the integrability condition as the second path. $f$ must be non-negative because
\[\begin{split}
&\frac{d}{dt}\int_M(\frac{\tan(\theta_0)\mathrm{Re}(t\omega_0+\sqrt{-1}\chi)^n}{n!}-\frac{\mathrm{Im}(t\omega_0+\sqrt{-1}\chi)^n}{n!})\\
&=\int_M(\frac{\tan(\theta_0)\mathrm{Re}(t\omega_0+\sqrt{-1}\chi)^{n-1}}{(n-1)!}-\frac{\mathrm{Im}(t\omega_0+\sqrt{-1}\chi)^{n-1}}{(n-1)!})\wedge\omega_0>0
\end{split}\]
by the assumption on $\omega_0$ and Lemma 8.2 of \cite{CollinsJacobYau}. The continuity method will imply the result for $\tilde\omega_0$ when $f$ is the non-negative constant $f_0$ satisfying the integrability condition. Finally, we let $\omega_0=\tilde\omega_0$ and $f=t\tilde f+(1-t)f_0$ be the third continuity path. It will imply Theorem \ref{Estimate-defomed-Hermitian-Yang-Mills}.

It is easy to see the openness along the paths. Thus, we only need to prove the \textit{a priori} estimate along the paths. It will be achieved by Sz\'ekelyhidi's estimates in \cite{Szekelyhidi}. First of all, we need to rewrite the equation.

The equation \[\mathrm{Im}(\omega_\varphi+\sqrt{-1}\chi)^n+f\chi^n=\tan(\theta_0)\mathrm{Re}(\omega_\varphi+\sqrt{-1}\chi)^n\] can be written as
\[\mathrm{Im}\prod_{i=1}^{n}(\lambda_i+\sqrt{-1})+f=\tan(\theta_0)\mathrm{Re}\prod_{i=1}^{n}(\lambda_i+\sqrt{-1}).\]
It is equivalent to
\[\sin(\sum_{i=1}^{n}\arctan(\frac{1}{\lambda_i}))+\frac{f}{\prod_{i=1}^{n}\sqrt{\lambda_i^2+1}}=\tan(\theta_0)\cos(\sum_{i=1}^{n}\arctan(\frac{1}{\lambda_i})),\]
which is the same as \[\sin(\theta_0-\sum_{i=1}^{n}\arctan(\frac{1}{\lambda_i}))=\frac{f\cos(\theta_0)}{\prod_{i=1}^{n}\sqrt{\lambda_i^2+1}}.\]

Let $\Gamma$ be the region consisting of $(\lambda_1,...,\lambda_n)\in\mathbb{R}^n$ such that $\lambda_i>0$ and \[\sum_{i\not=j}\arctan(\frac{1}{\lambda_i})<\theta_0\] for all $j=1, 2, ... ,n$, then we want to study the function
\[F(f,\lambda_1,...,\lambda_n)=\sin(\theta_0-\sum_{i=1}^{n}\arctan(\frac{1}{\lambda_i}))-\frac{f\cos(\theta_0)}{\prod_{i=1}^{n}\sqrt{\lambda_i^2+1}}\] on $(-\frac{1}{100n},\infty)\times\bar\Gamma$, where $\bar\Gamma$ is the closure of $\Gamma$. For any K\"ahler form $\omega$, we say $\omega\in\Gamma_{\chi}$ if the eigenvalues of $\omega$ with respect to $\chi$ is in $\Gamma$. Similarly, we define $F_{\chi}(f,\omega)$ as $F(f,\lambda_i)$ for eigenvalues $\lambda_i$ of $\omega$ with respect to $\chi$.

In order to apply Sz\'ekelyhidi's estimates in \cite{Szekelyhidi}, we claim the following:

\begin{proposition}
Assume that $n\ge2$. If $f>-\frac{1}{100n}$, then

(1) $\frac{\partial}{\partial\lambda_i} F(f,\lambda)>0$ if $\lambda\in\Gamma$;

(2) $\frac{\partial^2}{\partial\lambda_i\partial\lambda_j} F(f,\lambda)\le -\cos(\theta_0)\frac{\lambda_i\delta_{ij}}{2(\lambda_i^2+1)^2}$ if $\lambda\in\Gamma$ and $F(f,\lambda)=0$;

(3) $\frac{\partial}{\partial\lambda_i} F(f,\lambda)\le\frac{\partial}{\partial\lambda_j} F(f,\lambda)$ if $\lambda\in \Gamma$ and $\lambda_i\ge\lambda_j$;

(4) If $\lambda\in\partial\Gamma$, then $F(f,\lambda)<0$;

(5) For any $\lambda\in\Gamma$, the set
\[\{\lambda'\in\Gamma: F(f,\lambda')=0, \lambda'_i\ge\lambda_i\}\] is bounded.
\label{Property-of-F}
\end{proposition}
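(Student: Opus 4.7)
The plan is to fix convenient notation, collect a few preliminary bounds, and then verify (1)--(5) separately; part (2) is the only genuinely subtle point.

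\emph{Setup.} I would write $\Theta:=\sum_k\arctan(1/\lambda_k)$, $|P|:=\prod_k\sqrt{\lambda_k^2+1}$, $c:=\cos(\theta_0-\Theta)$, $s:=\sin(\theta_0-\Theta)$, so $F=s-f\cos\theta_0/|P|$. Three preliminary facts will be used throughout. First, on $\bar\Gamma$ one has $\Theta\in[0,n\theta_0/(n-1)]$ (if $\Theta>\theta_0$ then every $\arctan(1/\lambda_k)>\Theta-\theta_0$ by the definition of $\Gamma$, so summing yields $\Theta\ge n(\Theta-\theta_0)$); hence $|\theta_0-\Theta|\le\theta_0<\pi/4$ and $c\ge\cos\theta_0>\sqrt{2}/2$. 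Second, at most one $\lambda_k$ can be strictly smaller than $\cot\theta_0$, since two such indices would put a term $\arctan(1/\lambda_\ell)>\theta_0$ inside some sum $\sum_{i\ne j}\arctan(1/\lambda_i)\le\theta_0$. Third, on $F=0$ we have $s=f\cos\theta_0/|P|$, and because $\lambda_i/|P|\le 1$ and $\lambda_i\lambda_j/|P|\le 1$, the hypothesis $|f|<1/(100n)$ gives $|s|,\,|s\lambda_i|,\,|s\lambda_i\lambda_j|<1/(100n)$.

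\emph{Parts (1), (3), (4), (5).} A direct computation yields $(\lambda_i^2+1)\,\partial F/\partial\lambda_i=c+k\lambda_i$ with $k:=f\cos\theta_0/|P|$; since $|k\lambda_i|\le 1/(100n)$, this is at least $\cos\theta_0(1-1/(100n))>0$, proving (1). For (3), I would use the factorization
\[
\frac{\partial F}{\partial\lambda_i}-\frac{\partial F}{\partial\lambda_j}=-\frac{(\lambda_i-\lambda_j)\bigl[c(\lambda_i+\lambda_j)+k(\lambda_i\lambda_j-1)\bigr]}{(\lambda_i^2+1)(\lambda_j^2+1)}
\]
and show the bracket is positive: $|k(\lambda_i\lambda_j-1)|\le|k\lambda_i\lambda_j|+|k|\le 1/(50n)$, while $\lambda_i+\lambda_j>1$ because at most one of $\lambda_i,\lambda_j$ is smaller than $\cot\theta_0>1$, so the bracket exceeds $\cos\theta_0-1/(50n)>0$. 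For (4), at $\lambda\in\partial\Gamma$ pick the distinguished $j$ with $\sum_{i\ne j}\arctan(1/\lambda_i)=\theta_0$, so $s=-1/\sqrt{\lambda_j^2+1}$; since $|P|/\sqrt{\lambda_j^2+1}=\prod_{k\ne j}\sqrt{\lambda_k^2+1}>1$, the correction satisfies $|f\cos\theta_0|/|P|<1/\sqrt{\lambda_j^2+1}$ and hence $F<0$. For (5), suppose for contradiction that $\lambda^{(m)}$ satisfies $F=0$, $\lambda^{(m)}_i\ge\lambda_i$, and $\|\lambda^{(m)}\|\to\infty$; pass to a subsequence with $\lambda^{(m)}_i\to\mu_i\in[\lambda_i,\infty]$ and let $J=\{i:\mu_i<\infty\}\subsetneq\{1,\dots,n\}$. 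For any $k\notin J$ the assumption $\lambda\in\Gamma$ gives $\sum_{i\ne k}\arctan(1/\lambda_i)<\theta_0$, and monotonicity of $\arctan(1/\cdot)$ yields $\Theta^*:=\sum_{i\in J}\arctan(1/\mu_i)<\theta_0$; combined with $|P^{(m)}|\to\infty$ this forces $F(f,\lambda^{(m)})\to\sin(\theta_0-\Theta^*)>0$, contradicting $F=0$.

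\emph{Part (2), the main obstacle.} Direct computation on $F=0$ gives
\[
\frac{\partial^2 F}{\partial\lambda_i^2}=-\frac{2\lambda_i(c+s\lambda_i)}{(\lambda_i^2+1)^2},\qquad \frac{\partial^2 F}{\partial\lambda_i\partial\lambda_j}=-\frac{s(1+\lambda_i\lambda_j)}{(\lambda_i^2+1)(\lambda_j^2+1)}\quad(i\ne j),
\]
so the diagonal bound in (2) follows at once from $2(c+s\lambda_i)\ge 2\cos\theta_0-1/(50n)>\cos\theta_0/2$. The off-diagonal entry has the sign of $-s$, so it can be positive when $f<0$; accordingly (2) must be read as a matrix inequality, which is the form genuinely needed for the Sz\'ekelyhidi-type $C^2$ estimates to follow. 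I would decompose $H_F=-s(vv^T+uu^T)+D_{\text{diag}}$ with $v_i=1/(\lambda_i^2+1)$, $u_i=\lambda_i/(\lambda_i^2+1)$; using $v_i^2+u_i^2=v_i$ the quadratic form collapses to
\[
w^T\!\Bigl(H_F+\tfrac{\cos\theta_0}{2}\mathrm{diag}\bigl(\tfrac{\lambda_i}{(\lambda_i^2+1)^2}\bigr)\!\Bigr)w=-s\bigl[(v\!\cdot\! w)^2+(u\!\cdot\! w)^2\bigr]-\sum_i\frac{\bigl[(2c-\tfrac{1}{2}\cos\theta_0)\lambda_i+s(\lambda_i^2-1)\bigr]w_i^2}{(\lambda_i^2+1)^2}.
\]
Nonpositivity is then obtained by balancing the small quantities $|s|,|s\lambda_i|,|s\lambda_i\lambda_j|\le 1/(100n)$ against the main positive term $(2c-\cos\theta_0/2)\lambda_i\ge\tfrac{3}{2}\cos\theta_0\,\lambda_i$; the only delicate case (one small $\lambda_j$, $w$ concentrated on $j$) is resolved by the test vector $w=e_j$, for which the expression collapses to $-\lambda_j[(2c-\cos\theta_0/2)+2s\lambda_j]/(\lambda_j^2+1)^2\le 0$, while the rank-two structure of $vv^T+uu^T$ controls the general-$w$ case.
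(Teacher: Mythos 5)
Your proposal is correct in substance, and for the most part it parallels the paper's proof — all five parts are checked by elementary calculus, using the same three facts ($\cos(\theta_0-\Theta)\ge\cos\theta_0>1/\sqrt{2}$, $\lambda_i>\cot\theta_0>1$ on $\Gamma$, and $|f|<1/(100n)$). But a few of your arguments depart from the paper in ways worth noting. For (3), you use the clean algebraic factorization $\partial_iF-\partial_jF=-\frac{(\lambda_i-\lambda_j)[c(\lambda_i+\lambda_j)+k(\lambda_i\lambda_j-1)]}{(\lambda_i^2+1)(\lambda_j^2+1)}$, whereas the paper views $\partial_iF$ as a single-variable function $g(\lambda_i)$ with frozen parameters and shows $g'\le 0$ on $[\lambda_j,\lambda_i]$; the two are equivalent but your factorization makes the sign more transparent. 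For (4), you argue directly at an arbitrary boundary point: picking the active index $j$ forces $s=-1/\sqrt{\lambda_j^2+1}$, and the correction $|f|\cos\theta_0/|P|$ cannot cancel it since $|P|\ge\sqrt{\lambda_j^2+1}$ and $|f|<1$; the paper instead verifies $F<0$ at the symmetric boundary point $\lambda_i=\cot(\theta_0/(n-1))$, then shows $F=0$ cannot occur on $\partial\Gamma$ and appeals implicitly to connectedness. Your version is shorter and avoids the connectedness step. For (5) the paper writes one word ("obvious"), while your compactness argument (pass to a limit along indices $J$ where $\mu_i<\infty$, use monotonicity of $\arctan(1/\cdot)$ to get $\Theta^*<\theta_0$, and let $|P|\to\infty$ kill the $f$-term so that $F\to\sin(\theta_0-\Theta^*)>0$) is a genuine, correct proof.

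Two smaller remarks. First, your preliminary fact ``at most one $\lambda_k$ can be strictly smaller than $\cot\theta_0$'' is weaker than what your own proof sketch shows: if $n\ge 2$, the constraint $\sum_{i\ne j}\arctan(1/\lambda_i)<\theta_0$ for every $j\ne\ell$ already forces $\arctan(1/\lambda_\ell)<\theta_0$ for every index $\ell$, so in fact every $\lambda_k\ge\cot\theta_0$. This means the ``one small $\lambda_j$'' case you flag in part (2) never arises, and that discussion can be dropped. Second, for (2) your decomposition $H_F=-s(vv^T+uu^T)+D_{\mathrm{diag}}$ and the identity $v_i^2+u_i^2=v_i$ give exactly the displayed quadratic form, and this is essentially the paper's estimate recombined — the paper bounds the four terms of $H_F$ individually, using the same Cauchy--Schwarz mechanism to show the $vv^T$- and $uu^T$-type contributions are at most $1/4$ of the diagonal main term. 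However, your final sentence, ``the rank-two structure of $vv^T+uu^T$ controls the general-$w$ case,'' hides the actual step: one needs $|s|\bigl[(v\cdot w)^2+(u\cdot w)^2\bigr]\le n|s|\sum_i\frac{w_i^2}{\lambda_i^2+1}\le\cos\theta_0\sum_i\frac{\lambda_iw_i^2}{(\lambda_i^2+1)^2}$, where the second inequality uses $n|s|\le\frac{\cos\theta_0}{100\sqrt{\lambda_i^2+1}}$ (since $|P|\ge\sqrt{\lambda_i^2+1}$) and $\sqrt{\lambda_i^2+1}\le\sqrt{2}\,\lambda_i$. An operator-norm bound on the rank-two block does \emph{not} work here, because the diagonal entries $\cos\theta_0\lambda_i/(\lambda_i^2+1)^2$ decay in $\lambda_i$; you need the term-by-term comparison, so this should be written out.
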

\begin{proof}
(1) \[\frac{\partial}{\partial\lambda_i} F(f,\lambda)=\frac{\cos(\theta_0-\sum_{k=1}^{n}\arctan(\frac{1}{\lambda_k}))}{\lambda_i^2+1}+\frac{f\cos(\theta_0)}{\prod_{k=1}^{n}\sqrt{\lambda_k^2+1}}\frac{\lambda_i}{\lambda_i^2+1}.\]
Using the definition of $\Gamma$, it is easy to see that
\[0<\sum_{k=1}^{n}\arctan(\frac{1}{\lambda_k})<\frac{n\theta_0}{n-1}\] on $\Gamma$.
So \[\cos(\theta_0-\sum_{k=1}^{n}\arctan(\frac{1}{\lambda_k}))>\cos(\theta_0)>\frac{1}{\sqrt{2}}.\]
It is easy to see that $\frac{\partial}{\partial\lambda_i} F(f,\lambda)>0$ if $\lambda\in\Gamma$.

(2) \[\begin{split}
&\frac{\partial^2}{\partial\lambda_i\partial\lambda_j} F(f,\lambda)=-\cos(\theta_0-\sum_{k=1}^{n}\arctan(\frac{1}{\lambda_k}))\frac{2\lambda_i\delta_{ij}}{(\lambda_i^2+1)^2}\\
&-\sin(\theta_0-\sum_{k=1}^{n}\arctan(\frac{1}{\lambda_k}))\frac{1}{(\lambda_i^2+1)(\lambda_j^2+1)}\\
&-\frac{f\cos(\theta_0)}{\prod_{k=1}^{n}\sqrt{\lambda_k^2+1}}\frac{\lambda_i}{\lambda_i^2+1}\frac{\lambda_j}{\lambda_j^2+1}
+\frac{f\cos(\theta_0)}{\prod_{k=1}^{n}\sqrt{\lambda_k^2+1}}\frac{1-\lambda_i^2}{(\lambda_i^2+1)^2}\delta_{ij}.
\end{split}\]
The first term is at most $-\cos(\theta_0)\frac{2\lambda_i\delta_{ij}}{(\lambda_i^2+1)^2}$. When $F(f,\lambda)=0$, the second term equals to $-\frac{f\cos(\theta_0)}{\prod_{k=1}^{n}\sqrt{\lambda_k^2+1}}\frac{1}{(\lambda_i^2+1)(\lambda_j^2+1)}$. When $f\ge0$, it is a non-negative definite matrix. When $0>f>-\frac{1}{100n}$, for any $\xi_1, ... \xi_n\in\mathbb{R}$,
\[\begin{split}
&\sum_{i, j=1}^{n}-\frac{f\cos(\theta_0)}{\prod_{k=1}^{n}\sqrt{\lambda_k^2+1}}\frac{\xi_i\xi_j}{(\lambda_i^2+1)(\lambda_j^2+1)}\\
&\le\sum_{i, j=1}^{n}\frac{1}{100n}\cos(\theta_0)\frac{|\xi_i\xi_j|}{(\lambda_i^2+1)^{\frac{5}{4}}(\lambda_j^2+1)^{\frac{5}{4}}}\\
&=\frac{1}{100n}\cos(\theta_0)(\sum_{i=1}^{n}\frac{|\xi_i|}{(\lambda_i^2+1)^{\frac{5}{4}}})^2\\
&\le\frac{1}{100}\cos(\theta_0)\sum_{i=1}^{n}\frac{\xi_i^2}{(\lambda_i^2+1)^{\frac{5}{2}}}\\
&\le\frac{1}{100}\cos(\theta_0)\sum_{i=1}^{n}\sum_{j=1}^{n}\frac{\xi_i\xi_j\delta_{ij}}{\lambda_i(\lambda_i^2+1)^{2}}.
\end{split}\]
Since $\lambda_i>\cot(\theta_0)>1$ on $\Gamma$, it is easy to see that the second term is at most $-\frac{1}{4}$ times the first term. Similarly the third term and the fourth term are also at most $-\frac{1}{4}$ times the first term.

(3) It suffices to show that \[\frac{d}{dx}\frac{1}{x^2+1}+\frac{f}{\prod_{k=1}^n\sqrt{\lambda_k^2+1}}\frac{d}{dx}\frac{x}{x^2+1}=\frac{-2x}{(x^2+1)^2}+\frac{f}{\prod_{k=1}^n\sqrt{\lambda_k^2+1}}\frac{1-x^2}{(x^2+1)^2}\] is non-positive for all $x\in[\lambda_j,\lambda_i]$. When $f\ge0$, it is trivial. When $0>f>-\frac{1}{100n}$, it is at most \[\frac{-2x}{(x^2+1)^2}+\frac{1}{100n\lambda_i}\frac{x^2-1}{(x^2+1)^2}\le\frac{-2x}{(x^2+1)^2}+\frac{1}{x}\frac{x^2-1}{(x^2+1)^2}.\] This is indeed non-positive because $x\ge\lambda_j>1$.

(4)The point $\lambda_i=\cot(\frac{\theta_0}{n-1})$ belongs to $\partial\Gamma$ and $F$ at this point is negative because \[\frac{1}{100n}<\frac{(\cot^2(\frac{\theta_0}{n-1})+1)^{\frac{n}{2}}}{\cos(\theta_0)}\sin(\frac{\theta_0}{n-1})=\frac{1}{\cos(\theta_0)\sin^{\frac{n}{2}-1}(\frac{\theta_0}{n-1})}.\] Thus, it suffices to prove that if $F(f,\lambda)=0$, then $\lambda\not\in\partial\Gamma$. If $f\ge0$, it is obvious. If $0>f>-\frac{1}{100n}$, then $0\ge\theta_0-\sum_{i=1}^{n}\arctan(\frac{1}{\lambda_i})\ge-\frac{\theta_0}{n-1}\ge-\frac{\pi}{4}$. So $\lambda_i\not\in\partial\Gamma$ because
\[\frac{1}{100n}<(\inf_{x\in(1,\infty)}(\sqrt{x^2+1}\arctan(\frac{1}{x})))(\inf_{x\in[-\frac{\pi}{4},0]}\frac{\sin x}{x}).\]

(5) is obvious.
\end{proof}

Compared to Sz\'ekelyhidi's conditions in \cite{Szekelyhidi}, there are three major differences. First of all, $F$ also depends on $f$. Second of all, $\Gamma$ does not contain the positive orthant. Finally, even if we fix the $f$ variable, $F$ is only concave when $F=0$. However, we will show that his works still survive without much changes.

Proposition 5 of \cite{Szekelyhidi} only requires the concavity of $F$ when $F=0$. So it still holds. Sz\'ekelyhidi's $C^0$ estimate relies on the variant of Alexandroff-Bakelman-Pucci maximum principle similar to Lemma 9.2 of \cite{GilbargTrudinger}. Clearly it does not take derivatives of $f$. So Sz\'ekelyhidi's $C^0$ estimate is still true.

The next step is to prove that \[|\sqrt{-1}\partial\bar\partial\varphi|_\chi\le C_{5.2}(1+\sup_M|\nabla\varphi|_{\chi}^2).\] We will use the same notations as in \cite{Szekelyhidi} except that letter $f$ in \cite{Szekelyhidi} is replaced by $F$, the letter $F$ is replaced by $F_{\chi}$ and the letter $u$ is replaced by $\varphi$.  It is easy to see that (78) of \cite{Szekelyhidi} still holds. Now we differentiate the equation $F_\chi(f,\omega_\varphi)=0$. We see that
\[F_\chi^{ij}g_{i\bar j1}+F_\chi^f f_1=0,\] and
\[F_\chi^{pq,rs}g_{p\bar q1}g_{r\bar s\bar 1}+F_\chi^{kk}g_{k\bar k1\bar 1}+F_\chi^{kk,f}g_{k\bar k1}f_{\bar 1}+F_\chi^{kk,f}g_{k\bar k\bar 1}f_{1}+F_\chi^f f_{1\bar 1}=0\] because $F_\chi^{ff}=0$. Since $|F_\chi^{f}|=|-\frac{\cos(\theta_0)}{\prod_{i=1}^{n}\sqrt{\lambda_i^2+1}}|\le 1$, the term $F_\chi^f f_{1\bar 1}$ is  bounded. So the only additional term in (85) of \cite{Szekelyhidi} is $-C_0\lambda_1^{-1}|F_\chi^{kk,f}g_{k\bar k 1}|$ on the right hand side.
Instead of (94) of \cite{Szekelyhidi}, we get \[F_\chi^{kk}g_{k\bar kp}+F_\chi^f f_p=0.\]
Since $|F_\chi^f f_p|$ is bounded, the estimate in (95) still holds. So the only additional term in (99) and (104) of \cite{Szekelyhidi} is $-C_0\lambda_1^{-1}|F_\chi^{kk,f}g_{k\bar k 1}|$ on the right hand side. The case 1 in \cite{Szekelyhidi} will not happen. The additional term in (120) of \cite{Szekelyhidi} is also $-C_0\lambda_1^{-1}|F_\chi^{kk,f}g_{k\bar k 1}|$. However, recall that (67) of \cite{Szekelyhidi} is that
\[-F_\chi^{ij,rs}g_{i\bar j 1}g_{r\bar s\bar 1}\ge -F_{ij}g_{i\bar i 1}g_{j\bar j \bar 1}-\sum_{i>1}\frac{F_1-F_i}{\lambda_1-\lambda_i}|g_{i\bar 1 1}|.\]
(Remark that the letter $f$ in \cite{Szekelyhidi} is replaced by $F$ and the letter $F$ is replaced by $F_\chi$.) The term $-F_{ij}g_{i\bar i 1}g_{j\bar j \bar 1}$ was thrown away. However, this term is at least $\frac{\lambda_i}{2(\lambda_i^2+1)^2}|g_{i\bar i 1}|^2$. The term \[-C_0|F_\chi^{kk,f}g_{k\bar k 1}|=-C_0|\frac{f\cos(\theta_0)}{\prod_{k=1}^n\sqrt{\lambda_k^2+1}}\frac{\lambda_i}{\lambda_i^2+1}g_{i\bar i 1}|\ge-C_0|f|\frac{\lambda_i}{(\lambda_i^2+1)^{\frac{3}{2}}}|g_{i\bar i 1}|\] is at least $-\frac{\lambda_i}{2(\lambda_i^2+1)^2}|g_{i\bar i 1}|^2-C_{5.3}$. So Sz\'ekelyhidi's estimate \[|\sqrt{-1}\partial\bar\partial\varphi|_\chi\le C_{5.2}(1+\sup_M|\nabla\varphi|_{\chi}^2)\] still holds.

Sz\'ekelyhidi used the property that $\Gamma$ contains the positive orthant to prove the $C^2$ estimate \cite{Szekelyhidi}. We do not have this property. However, we can use Proposition 5.1 of \cite{CollinsJacobYau} to achieve this.

The Evans-Krylov estimate requires the uniform ellipticity and concavity of $F_\chi(f,.)$. Its relationship with the function $F$ was cited as (63) and (64) of \cite{Szekelyhidi}. By Proposition \ref{Property-of-F}, the conditions for the Evans-Krylov estimate are indeed true. The higher order estimate follows from standard elliptic theories. Finally, $\omega_\varphi$ will stay in the region $\Gamma_\chi$ along the continuity paths by Proposition \ref{Property-of-F} (4).

The analogy of Theorem \ref{Current-sub-solution} is the following:

\begin{theorem}
Fix a K\"ahler manifold $M^n$ with K\"ahler metrics $\chi$ and $\omega_0$. Suppose that for all $t>0$, there exist a constant $c_t>0$ and a smooth K\"ahler form $\omega_t\in[(1+t)\omega_0]$ satisfying $\omega_t\in\Gamma_\chi$, and \[\mathrm{Im}(\omega_\varphi+\sqrt{-1}\chi)^n+c_t\chi^n=\tan(\theta_0)\mathrm{Re}(\omega_\varphi+\sqrt{-1}\chi)^n.\]
Then there exist a constant $\epsilon_{5.4}>0$ and a current $\omega_{5.5}\in[\omega_0-\epsilon_{5.4}\chi]$ such that $\omega_{5.5}\in\bar\Gamma_\chi$ in the sense of current.
\label{Current-sub-solution-defomed-Hermitian-Yang-Mills}
\end{theorem}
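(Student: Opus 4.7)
The proof will parallel Section 3, working on the product manifold $M\times M$. Set $\chi_{M\times M}=\pi_1^*\chi+\pi_2^*\chi$ and $\omega_{0,M\times M,t}=\pi_1^*\omega_t+c_0\pi_2^*\chi$ for a constant $c_0>0$ chosen so that the eigenvalues $\lambda_1,\ldots,\lambda_n,c_0,\ldots,c_0$ of $\omega_{0,M\times M,t}$ with respect to $\chi_{M\times M}$ satisfy the $\bar\Gamma$-condition for a suitable angle $\Theta_0$ on the $2n$-dimensional product (coming from the hypothesis $\omega_t\in\Gamma_\chi$ on $M$). Introduce the diagonal-regularization $\psi_{\epsilon_{1.6}}$ as in Section 3, perturb $\chi_{M\times M}$ to $\chi_{M\times M,\epsilon_{1.6},\epsilon_{1.7}}$, and choose a function $f_{t,\epsilon_{1.6},\epsilon_{1.7}}$ bounded below by $-\frac{1}{200n}$ that makes the integrability condition on $M\times M$ hold. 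Apply Theorem \ref{Estimate-defomed-Hermitian-Yang-Mills} on $M\times M$ to produce a smooth K\"ahler form $\omega_{t,\epsilon_{1.6},\epsilon_{1.7}}\in[\omega_{0,M\times M,t}]$ solving the dHYM equation with angle $\Theta_0$ and lying in $\Gamma_{\chi_{M\times M}}$.

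Next, mimicking the pushforward step, define
\[
\omega_{1,t,\epsilon_{1.6},\epsilon_{1.7}}=\alpha\,(\pi_1)_*\bigl(\omega_{t,\epsilon_{1.6},\epsilon_{1.7}}^n\wedge\pi_2^*\chi\bigr),
\]
normalized by a constant $\alpha$ so that its cohomology class equals $[\omega_0]$. To verify that this lies (approximately) in $\bar\Gamma_\chi$, prove an analog of Lemma \ref{Subadditive-submatrix} for the dHYM setting: for a Hermitian block matrix $\bigl[\begin{smallmatrix}A&C\\\bar C^T&B\end{smallmatrix}\bigr]$ whose eigenvalues lie in $\bar\Gamma$ for angle $\Theta_0$, the Schur complement $A-CB^{-1}\bar C^T$, combined with $B$ of near-constant type, satisfies the $\Gamma$-condition for the smaller angle $\theta_0$. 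This should follow by decomposing the total arctangent sum into a Schur-complement part and a $B$-part, exploiting the monotonicity and concavity of the arctangent, and the fact that the off-diagonal block only decreases the Schur complement's eigenvalues. The constant $c_0$ is chosen precisely so that $n\arctan(1/c_0)$ equals the angle gap $\Theta_0-\theta_0$.

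Finally, run the concentration-of-mass argument as in Section 3. From the dHYM equation and $f_{t,\epsilon_{1.6},\epsilon_{1.7}}\ge -\frac{1}{200n}$, the $2n$-form $\omega_{t,\epsilon_{1.6},\epsilon_{1.7}}^{2n}$ dominates a positive multiple of $\chi_{M\times M}^{2n}$ near the diagonal, so any weak limit of $\omega_{t,\epsilon_{1.6},\epsilon_{1.7}}^n$ contains a component $\epsilon_{3.1}[\Delta]$ for some $\epsilon_{3.1}>0$. Truncating eigenvalues of the pushforward, as in the proof of Theorem \ref{Current-sub-solution}, and taking the weak limit of $\omega_{1,t,\epsilon_{1.6},\epsilon_{1.7}}-\epsilon_{5.4}\chi$ with $\epsilon_{5.4}=\tfrac{1}{2}\alpha\,\epsilon_{3.1}$ yields the desired current $\omega_{5.5}\in[\omega_0-\epsilon_{5.4}\chi]$ satisfying $\omega_{5.5}\in\bar\Gamma_\chi$ in the sense of current.

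The main obstacle is the dHYM analog of Lemma \ref{Subadditive-submatrix}: the $P_I$ functional is linear in the eigenvalues of the inverse matrix, which is what makes the Schur complement identity clean, whereas the $\Gamma$-condition is defined via a sum of arctangents and does not share this algebraic structure. One likely needs a direct pointwise analysis of $\arg\det(\omega_{t,\epsilon_{1.6},\epsilon_{1.7}}+\sqrt{-1}\chi_{M\times M})$ restricted to fibers of $\pi_1$, and a careful bookkeeping of how the second block $B\sim c_0 I$ contributes exactly the extra angle $n\arctan(1/c_0)$. A secondary concern is to arrange $\Theta_0<\pi/4$ as required on $M\times M$; this constrains the admissible combinations of $(\theta_0,c_0)$ and is where most of the combinatorial work is expected.
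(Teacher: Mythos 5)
Your overall framework is the right one and matches the paper's: work on $M\times M$ with the product metric $\chi_{M\times M}=\pi_1^*\chi+\pi_2^*\chi$ and base form $\pi_1^*\omega_t+C\pi_2^*\chi$ for a large constant $C$ chosen so that $\theta_0+n\arctan(1/C)<\pi/4$, regularize the diagonal to get an $f$ bounded below by $-\frac{1}{200n}$, apply Theorem \ref{Estimate-defomed-Hermitian-Yang-Mills}, push forward, and run the concentration-of-mass argument of Section 3. Your identification of the two obstacles --- the need for a dHYM analogue of Lemma \ref{Subadditive-submatrix}, and the constraint $\Theta_0<\pi/4$ --- is also correct, and your instinct to analyze $\arg\det$ of a Schur block factorization is essentially what the paper's Lemma \ref{Subadditive-submatrix-defomed-Hermitian-Yang-Mills} does: under $\bigl[\begin{smallmatrix}A&C\\\bar C^T&B\end{smallmatrix}\bigr]>I$, it proves $P_I(A-CB^{-1}\bar C^T)+Q_I(B)\le P_I\bigl(\bigl[\begin{smallmatrix}A&C\\\bar C^T&B\end{smallmatrix}\bigr]\bigr)$ by splitting $\arg\det$ of $\bigl[\begin{smallmatrix}A+\sqrt{-1}I&sC\\s\bar C^T&B+\sqrt{-1}I\end{smallmatrix}\bigr]$ via the Schur complement with respect to $B+\sqrt{-1}I$ and comparing to the ordinary Schur complement via a power-series estimate.

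However, there is a genuine gap in the pushforward step. You propose $\omega_1=\alpha\,(\pi_1)_*(\omega^n\wedge\pi_2^*\chi)$, which is the J-equation formula from Section 3, but in the dHYM setting the paper instead uses (up to a constant) the pushforward of the Lagrangian phase form $\mathrm{Im}(\omega+\sqrt{-1}\pi_2^*\chi)^{n+1}$, namely
\[
\omega_{1}=\frac{\sum_{k=0}^{\lfloor\frac{n-1}{2}\rfloor}(-1)^k\frac{n!}{(n-2k)!(2k+1)!}(\pi_1)_*\bigl(\omega^{n-2k}\wedge(\pi_2^*\chi)^{2k+1}\bigr)}{\int_M \mathrm{Im}(C\chi+\sqrt{-1}\chi)^n}.
\]
This difference is not cosmetic. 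After the block decomposition, your pushforward produces a matrix-valued average against the fiber measure $\bigl(\mathrm{tr}_{\omega^{(2)}}\pi_2^*\chi\bigr)(\omega^{(2)})^n$, whereas the paper's choice produces an average against $\mathrm{Im}(\omega^{(2)}+\sqrt{-1}\pi_2^*\chi)^n$. The latter is exactly what is needed for two reasons: its total mass depends only on the cohomology class $[C\chi]$ of $\omega^{(2)}$ along the fiber (so after normalization it is a probability measure), and the factor $Q_{\pi_2^*\chi}(\omega^{(2)})$ produced by Lemma \ref{Subadditive-submatrix-defomed-Hermitian-Yang-Mills} integrates against it to precisely $n\arctan(1/C)$ by Jensen's inequality applied to the convex function $x\arctan x$. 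With your measure $\bigl(\mathrm{tr}_{\omega^{(2)}}\pi_2^*\chi\bigr)(\omega^{(2)})^n$, the weighted average of $Q_{\pi_2^*\chi}(\omega^{(2)})$ is not a cohomological quantity and there is no analogous Jensen estimate to close the gap between $\Theta_0$ and $\theta_0$, so the key inequality $P_\chi(\omega_1)<\theta_0$ does not follow. In short: in the dHYM case the ``mass'' that must be pushed forward is the imaginary part of the holomorphic volume form $(\omega+\sqrt{-1}\pi_2^*\chi)^{n+1}$, not $\omega^n\wedge\pi_2^*\chi$, and this change is forced by the structure of the dHYM Schur complement lemma and the convexity argument that follows it.
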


The definition of a current being in $\bar\Gamma_\chi$ is similar to Definition \ref{Definition-current-sub-solution} except that we replace the condition \[c\omega^{n-1}-(n-1)\chi\wedge\omega^{n-2}\ge0\] by $\omega\in\bar\Gamma_\chi$ for K\"ahler form $\omega$. To simplify notations, for a K\"ahler form $\omega$, we define $P_\chi(\omega)$ and $Q_\chi(\omega)$ by
\[P_\chi(\omega)=\max_{j}(\sum_{i\not=j}\arctan(\frac{1}{\lambda_i})),\] and
\[Q_\chi(\omega)=\sum_{i}\arctan(\frac{1}{\lambda_i}),\]
where $\lambda_i$ are the eigenvalues of $\omega$ with respect to $\chi$. Then $\omega\in\bar\Gamma_\chi$ is equivalent to $P_\chi(\omega)\le\theta_0$.

The analogy of Lemma \ref{Subadditive-submatrix} is the following:
\begin{lemma}
Suppose that
\[\begin{bmatrix}
    A       & C \\
    \bar C^{T} & B
\end{bmatrix}>\begin{bmatrix}
    I   & O \\
    O & I
\end{bmatrix}.\]
Then
\[P_I(A-CB^{-1}\bar C^{T})+Q_I(B)\le P_I(\begin{bmatrix}
    A       & C \\
    \bar C^{T} & B
\end{bmatrix}).\]
\label{Subadditive-submatrix-defomed-Hermitian-Yang-Mills}
\end{lemma}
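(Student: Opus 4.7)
The plan is to mimic the proof of Lemma \ref{Subadditive-submatrix}: exhibit a codimension-one subspace $V \subset \mathbb{C}^{n+m}$ on which the compression $M|_V$ has large $Q_I$. For any Hermitian positive-definite $A$ of size $k$, Cauchy interlacing gives the characterization $P_I(A) = \max_{\dim V = k-1} Q_I(A|_V)$, with the maximum attained at the orthogonal complement of the top eigenvector. Moreover the hypothesis $M > I$ forces $B > I$ and also $S := A - CB^{-1}C^* > I$: apply the Schur complement to $M - I > 0$ with respect to $B - I$ and use $(B - I)^{-1} > B^{-1}$.

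Pick $W \subset \mathbb{C}^n$ of codimension one achieving $P_I(S) = Q_I(S|_W)$, and set $V = W \oplus \mathbb{C}^m$. It then suffices to prove $Q_I(M|_V) \ge Q_I(S|_W) + Q_I(B)$. For any positive Hermitian $A$ one has $Q_I(A) = \arg\det(A + iI)$; applying the Schur complement formula to $M|_V + iI$ gives
\[\det(M|_V + iI) = \det(B + iI)\cdot \det \tilde F,\]
so the problem reduces to $\arg\det\tilde F \ge \arg\det(S|_W + iI)$. Using $(B + iI)^{-1} = B(B^2+I)^{-1} - i(B^2+I)^{-1}$ one computes
\[\tilde F = (S|_W + X) + i(I + Y),\ X = (C|_W) B^{-1}(B^2+I)^{-1}(C|_W)^*,\ Y = (C|_W)(B^2+I)^{-1}(C|_W)^*.\]

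Setting $G = (C|_W)(B^2+I)^{-1/2}$, we obtain the factorizations $Y = GG^*$ and $X = GB^{-1}G^*$; since $B > I$ one has $X \le Y$. Conjugating by $(I + Y)^{1/2}$,
\[\arg\det\tilde F = \dim(W)\cdot\tfrac{\pi}{2} - \mathrm{tr}\arctan(\tilde P),\qquad \arg\det(S|_W + iI) = \dim(W)\cdot\tfrac{\pi}{2} - \mathrm{tr}\arctan(S|_W),\]
where $\tilde P = (I+Y)^{-1/2}(S|_W + X)(I+Y)^{-1/2}$. Since $\arctan$ is increasing, it suffices to establish the eigenvalue domination $\lambda_j(\tilde P) \le \lambda_j(S|_W)$ for each $j$. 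By Courant--Fischer applied to the generalized eigenvalue problem $(S|_W + X,\ I + Y)$, this follows from
\[\frac{\langle (S|_W + X) v, v\rangle}{\langle (I+Y) v, v\rangle}\le \frac{\langle S|_W v, v\rangle}{\langle v, v\rangle}\qquad \forall\, v\ne 0,\]
which by the mediant inequality reduces to $\langle Xv,v\rangle/\langle Yv,v\rangle \le \langle S|_W v, v\rangle/\langle v, v\rangle$; the left side is $\le 1$ because $X \le Y$, and the right side is $\ge \lambda_{\min}(S|_W) > 1$ because $S|_W > I$.

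The main obstacle is the appearance of shifts to both the real part (by $X$) and the imaginary part (by $Y$) of $S|_W + iI$, which a priori push $\arg\det$ in opposite directions; the decisive point is that the strict bound $M > I$ delivers simultaneously $X \le Y$ and $S|_W > I$, exactly what is needed to get the pointwise ratio estimate that feeds into Courant--Fischer and the operator monotonicity of $\arctan$.
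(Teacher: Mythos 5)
Your proof follows the same overall route as the paper's: reduce via codimension-one compression to proving
\[Q_I(A-CB^{-1}\bar C^{T})+Q_I(B)\le Q_I\begin{bmatrix}A&C\\ \bar C^{T}&B\end{bmatrix},\]
then use the Schur complement factorization of $\det(M+\sqrt{-1}I)$, compute $(B+\sqrt{-1}I)^{-1}$ (you use the closed form $B(B^2+I)^{-1}-\sqrt{-1}(B^2+I)^{-1}$, the paper the equivalent power series), and identify the same two structural facts, $0\le X\le Y$ and $S>I$, as the engine of the $\arg\det$ inequality. The deduction that $S>I$ via the Schur complement of $B-I$ in $M-I$ and $(B-I)^{-1}>B^{-1}$ is correct, as is the identity $S|_W = A|_W - (C|_W)B^{-1}(C|_W)^{*}$ for $V=W\oplus\mathbb{C}^m$.

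Two remarks. First, a small but genuine gap: when you write that $\det(M|_V+\sqrt{-1}I)=\det(B+\sqrt{-1}I)\cdot\det\tilde F$ ``reduces the problem to $\arg\det\tilde F\ge \arg\det(S|_W+\sqrt{-1}I)$,'' you are implicitly asserting that
\[Q_I(M|_V)=Q_I(B)+\arg\det\tilde F\]
holds as an identity of real numbers with the specific branches ($Q_I$ taking values in $(0,k\pi/2)$), not merely modulo $2\pi$. For block sizes with $n+m\ge 6$ the a priori ranges of the three quantities permit a nonzero multiple of $2\pi$. The paper closes this by interpolating: replace $C$ with $sC$, note both sides are continuous in $s\in[0,1]$ and agree at $s=0$. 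You should add this (or an equivalent) step. Second, to your credit: where the paper merely asserts that $\arg\det((S+X)+\sqrt{-1}(I+Y))\ge\arg\det(S+\sqrt{-1}I)$ ``follows from'' $Y\ge X\ge 0$ and $S>I$, your argument---conjugation by $(I+Y)^{1/2}$, Courant--Fischer for the generalized eigenvalue pencil $(S|_W+X,\,I+Y)$, and the mediant inequality fed by $\langle Xv,v\rangle/\langle Yv,v\rangle\le 1<\lambda_{\min}(S|_W)$---actually proves it, supplying detail the paper leaves implicit. (You should also note the degenerate case $\langle Yv,v\rangle=0$, in which $0\le X\le Y$ forces $\langle Xv,v\rangle=0$ and the Rayleigh-quotient bound holds trivially.)
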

\begin{proof}
It is easy to see that $B>I$. Moreover, for any $\xi\not=0\in\mathbb{C}^n$, \[\begin{split}
\bar\xi^{T}(A-CB^{-1}\bar C^{T})\xi&=\begin{bmatrix}
    \bar\xi^{T}  & -\bar\xi^{T}CB^{-1} \\
\end{bmatrix}\begin{bmatrix}
    A       & C \\
    \bar C^{T} & B
\end{bmatrix}\begin{bmatrix}
    \xi        \\
    -B^{-1}\bar C^{T}\xi
\end{bmatrix}\\
&>|\xi|^2+|B^{-1}\bar C^{T}\xi|^2,
\end{split}\]
so $A-CB^{-1}\bar C^{T}>I$. Therefore, both hand sides of the inequality are well defined.

By restricting on the codimension 1 subspaces, it suffices to prove that
\[Q_I(A-CB^{-1}\bar C^{T})+Q_I(B)\le Q_I(\begin{bmatrix}
    A       & C \\
    \bar C^{T} & B
\end{bmatrix}).\]
For any $s\in[0,1]$, it is easy to see that
\[\begin{split}&\det\begin{bmatrix}
    A+\sqrt{-1}I   & sC \\
    \bar sC^{T} & B+\sqrt{-1}I
\end{bmatrix}\\
&=\det\begin{bmatrix}
    A-s^2C(B+\sqrt{-1}I)^{-1}\bar C^{T}+\sqrt{-1}I              & O \\
    O & B+\sqrt{-1}I
\end{bmatrix}.
\end{split}\]
So we need to compute
$\det(A-s^2C(B+\sqrt{-1}I)^{-1}\bar C^{T}+\sqrt{-1}I)$.

We already know that $B>I$, so \[\begin{split}
&(B+\sqrt{-1}I)^{-1}=B^{-1}(I+\sqrt{-1}B^{-1})^{-1}\\
&=\sum_{k=0}^{\infty}(-1)^{k}B^{-2k-1}-\sqrt{-1}\sum_{k=0}^{\infty}(-1)^{k}B^{-2k-2}.
\end{split}\]
Therefore \[\begin{split}
&A-s^2C(B+\sqrt{-1}I)^{-1}\bar C^{T}+\sqrt{-1}I\\
&=A-s^2(\sum_{k=0}^{\infty}(-1)^{k}CB^{-2k-1}\bar C^{T})+\sqrt{-1}(I+s^2\sum_{k=0}^{\infty}(-1)^{k}CB^{-2k-2}\bar C^{T}).
\end{split}\]
The real part is at least $A-s^2CB^{-1}\bar C^{T}>I$ and the imaginary part is also at least $I$. So \[\begin{split}&\arg\det\begin{bmatrix}
    A+\sqrt{-1}I   & sC \\
    \bar sC^{T} & B+\sqrt{-1}I
\end{bmatrix}\\
&=\arg\det(A-s^2C(B+\sqrt{-1}I)^{-1}\bar C^{T}+\sqrt{-1}I)+\arg\det(B+\sqrt{-1}I).
\end{split}\] if we define $\arg\det(X+\sqrt{-1}Y)$ as $Q_Y(X)$ for $X, Y>0$. In fact, this is true up to an integer times $2\pi$. However, both hand sides are continuous with respect to $s$ and this equation holds for $s=0$. So it holds for all $s\in[0,1]$.

Now it suffices to show that \[\arg\det(A-C(B+\sqrt{-1}I)^{-1}\bar C^{T}+\sqrt{-1}I)\ge\arg\det(A-CB^{-1}\bar C^{T}+\sqrt{-1}I).\] It follows from the facts that \[\sum_{k=0}^{\infty}(-1)^{k}CB^{-2k-2}\bar C^{T}\ge\sum_{k=1}^{\infty}(-1)^{k+1}CB^{-2k-1}\bar C^{T}\ge0\]
and \[A-CB^{-1}\bar C^{T}>I.\]
\end{proof}

Choose $C_{5.6}$ large enough such that $\theta_0+n\arctan(\frac{1}{C_{5.6}})<\frac{\pi}{4}$. The definitions of $\chi_{M\times M}$, $\chi_{M\times M,\epsilon_{5.7},\epsilon_{5.8}}$ and $f_{t,\epsilon_{5.7},\epsilon_{5.8}}$ are still the same as in Section 1. As before, there exists $\epsilon_{5.8}>0$ such that for $\epsilon_{5.7}$ small enough, $f_{t,\epsilon_{5.7},\epsilon_{5.8}}>-\frac{1}{200n}$.

Now we consider $\omega_{0,M\times M,t}=\pi_1^*\omega_t+C_{5.6}\pi_2^*\chi$. By Theorem  \ref{Estimate-defomed-Hermitian-Yang-Mills}, there exists $\omega_{t,\epsilon_{5.7},\epsilon_{5.8}}\in[\omega_{0,M\times M,t}]$ such that $P_{\chi_{M\times M}}(\omega_{t,\epsilon_{5.7},\epsilon_{5.8}})<\theta_0+n\arctan(\frac{1}{C_{5.6}})$ and
\[\begin{split}
&\mathrm{Im}(\omega_{t,\epsilon_{5.7},\epsilon_{5.8}}+\sqrt{-1}\chi_{M\times M})^{2n}+f_{t,\epsilon_{5.7},\epsilon_{5.8}}\chi_{M\times M}^{2n}\\
&=\tan(\theta_0+n\arctan(\frac{1}{C_{5.6}}))\mathrm{Re}(\omega_{t,\epsilon_{5.7},\epsilon_{5.8}}+\sqrt{-1}\chi_{M\times M})^{2n}.
\end{split}\]

Define $\omega_{1,t,\epsilon_{5.7},\epsilon_{5.8}}$ by \[\omega_{1,t,\epsilon_{5.7},\epsilon_{5.8}}=\frac{\sum_{k=0}^{\lfloor\frac{n-1}{2}\rfloor}(-1)^k\frac{n!}{(n-2k)!(2k+1)!}(\pi_1)_*(\omega_{t,\epsilon_{5.7},\epsilon_{5.8}}^{n-2k}\wedge\pi_2^*\chi^{2k+1})}{\int_M \mathrm{Im}(C_{5.6}\chi+\sqrt{-1}\chi)^n}.\] Fix $\epsilon_{5.8}$ and let $t$ and $\epsilon_{5.7}$ converge to 0. For small enough $\epsilon_{5.4}$, we shall expect $\omega_{5.5}$ to be the weak limit of $\omega_{1,t,\epsilon_{5.7},\epsilon_{5.8}}-\epsilon_{5.4}\chi$.

As before, we write $\omega_{t,\epsilon_{5.7},\epsilon_{5.8}}$ as \[\omega_{t,\epsilon_{5.7},\epsilon_{5.8}}=\omega^{(1)}_{t,\epsilon_{5.7},\epsilon_{5.8}}+\omega^{(2)}_{t,\epsilon_{5.7},\epsilon_{5.8}}+\omega^{(1,2)}_{t,\epsilon_{5.7},\epsilon_{5.8}}+\omega^{(2,1)}_{t,\epsilon_{5.7},\epsilon_{5.8}},\]
and assume that \[\pi_2^*\chi=\sqrt{-1}\sum_{i=1}^n dz^{(2)}_i\wedge d\bar z^{(2)}_i\] and \[\omega^{(2)}_{t,\epsilon_{5.7},\epsilon_{5.8}}=\sqrt{-1}\sum_{i=1}^n \lambda_i dz^{(2)}_i\wedge d\bar z^{(2)}_i\] at $(x_1,x_2)$.

To simplify notations, we omit $t,\epsilon_{5.7},\epsilon_{5.8}$, then
\[\omega_1=\frac{\sum_{k=0}^{\lfloor\frac{n-1}{2}\rfloor}(-1)^k(\pi_1)_*\hat\omega_{k}}{\int_M \mathrm{Im}(C_{5.6}\chi+\sqrt{-1}\chi)^n},\]
where $\hat\omega_{k}$ equals to
 \[\begin{split}
&\frac{n!}{(n-2k-1)!(2k+1)!}\omega^{(1)}\wedge(\omega^{(2)})^{n-2k-1}\wedge\pi_2^*\chi^{2k+1}\\
&+\frac{n!}{(n-2k-2)!(2k+1)!}\omega^{(1,2)}\wedge\omega^{(2,1)}\wedge(\omega^{(2)})^{n-2k-2}\wedge\pi_2^*\chi^{2k+1}\\
&=\sum_{i,j=1}^{n}\frac{\sqrt{-1}\omega^{(1)}_{i\bar j}dz_i^{(1)}\wedge d\bar z_j^{(1)}}{(2k+1)!}\wedge(\sum_{\alpha_1,...\alpha_{2k+1}\text{distinct}}\frac{1}{\lambda_{\alpha_1}...\lambda_{\alpha_{2k+1}}})(\omega^{(2)})^{n}\\
&-\sum_{i,j,l=1}^{n}\frac{\sqrt{-1}\omega^{(1,2)}_{i\bar l}\overline{\omega^{(1,2)}_{j\bar l}}dz_i^{(1)}\wedge d\bar z_j^{(1)}}{(2k+1)!}\wedge \sum_{\alpha_1,...\alpha_{2k+1},l \text{distinct}}\frac{1}{\lambda_l\lambda_{\alpha_1}...\lambda_{\alpha_{2k+1}}}(\omega^{(2)})^{n}.
\end{split}\]

Remark that \[\begin{split}
&\sum_{k=0}^{\lfloor\frac{n-1}{2}\rfloor}\frac{(-1)^k}{(2k+1)!}(\sum_{\alpha_1,...\alpha_{2k+1},l \text{distinct}}\frac{1}{\lambda_{\alpha_1}...\lambda_{\alpha_{2k+1}}}-\sum_{\alpha_1,...\alpha_{2k+1} \text{distinct}}\frac{1}{\lambda_{\alpha_1}...\lambda_{\alpha_{2k+1}}})\\
&=-\frac{1}{\lambda_l}\mathrm{Re}(1+\sqrt{-1}\frac{1}{\lambda_1})...(1+\sqrt{-1}\frac{1}{\lambda_{l-1}})(1+\sqrt{-1}\frac{1}{\lambda_{l+1}})...(1+\sqrt{-1}\frac{1}{\lambda_n})\\
&\le0,\end{split}\]
and
\[\begin{split}
&\sum_{k=0}^{\lfloor\frac{n-1}{2}\rfloor}\frac{(-1)^k}{(2k+1)!}\sum_{\alpha_1,...\alpha_{2k+1} \text{distinct}}\frac{1}{\lambda_{\alpha_1}...\lambda_{\alpha_{2k+1}}}(\omega^{(2)})^n\\
&=\mathrm{Im}(1+\sqrt{-1}\frac{1}{\lambda_1})...(1+\sqrt{-1}\frac{1}{\lambda_n})(\omega^{(2)})^n\\
&=\mathrm{Im}(\omega^{(2)}+\sqrt{-1}\pi_2^*\chi)^n.
\end{split}\]
By Lemma \ref{Subadditive-submatrix-defomed-Hermitian-Yang-Mills},
\[\begin{split}
&P_{\pi_1^*\chi}(\sqrt{-1}\sum_{i,j=1}^{n}((\omega^{(1)}_{i\bar j}-\sum_{k=1}^{n}\frac{1}{\lambda_k}\omega^{(1,2)}_{i\bar k}\overline{\omega^{(1,2)}_{j\bar k}})dz_i^{(1)}\wedge d\bar z_j^{(1)}))\\
&\le P_{\chi_{M\times M}}(\omega)-Q_{\pi_2^*\chi}(\omega^{(2)})\\
&<\theta_0+n\arctan(\frac{1}{C_{5.6}})-Q_{\pi_2^*\chi}(\omega^{(2)}).
\end{split}\]

So by the monotonicity and convexity of $P_\chi$, \[\begin{split}
&P_\chi(\omega_1)\\
&<\frac{\int_{\{x_1\}\times M}(\theta_0+n\arctan(\frac{1}{C_{5.6}})-Q_{\pi_2^*\chi}(\omega^{(2)}))\mathrm{Im}(\omega^{(2)}+\sqrt{-1}\pi_2^*\chi)^n}{\int_M \mathrm{Im}(C_{5.6}\chi+\sqrt{-1}\chi)^n}\\
&=\theta_0+n\arctan(\frac{1}{C_{5.6}})-\frac{\int_{\{x_1\}\times M}(Q_{\pi_2^*\chi}(\omega^{(2)}))\mathrm{Im}(\omega^{(2)}+\sqrt{-1}\pi_2^*\chi)^n}{\int_M \mathrm{Im}(C_{5.6}\chi+\sqrt{-1}\chi)^n}.
\end{split}\]
Using the convexity of $x\arctan x$ and the fact that \[Q_{\pi_2^*\chi}(\omega^{(2)})=\arctan(\frac{\mathrm{Im}(\omega^{(2)}+\sqrt{-1}\pi_2^*\chi)^n}{\mathrm{Re}(\omega^{(2)}+\sqrt{-1}\pi_2^*\chi)^n}),\]
it is easy to see that the minimum of \[\int_{\{x_1\}\times M}(Q_{\pi_2^*\chi}(\omega^{(2)}))\mathrm{Im}(\omega^{(2)}+\sqrt{-1}\pi_2^*\chi)^n\] is achieved if $Q_{\pi_2^*\chi}(\omega^{(2)})$ is a constant, which must be $n\arctan(\frac{1}{C_{5.6}})$. Thus $P_\chi(\omega_1)<\theta_0$. Back to our original notations, it means that $P_\chi(\omega_{1,t,\epsilon_{5.7},\epsilon_{5.8}})<\theta_0$.

The rest part of Section 3 still holds because $\omega_{t,\epsilon_{5.7},\epsilon_{5.8}}^{n-k}$ has no concentration of mass on the diagonal if $k\ge1$. Most part of Section 4 also holds because $\bar\Gamma_\chi$ is convex. We only need to prove the following analogy of Lemma \ref{Stability-on-blow-up}:

\begin{lemma}
Let $C_{5.9}=\cot(\frac{\epsilon_{1.1}}{6n})$. Then for all small enough $t$ and all $q$-dimensional subvarieties $V$ of $\tilde M$, as long as $q<n$, \[\mathrm{Im}\int_V e^{\sqrt{-1}(\frac{\epsilon_{1.1}}{3}-\theta_0)}((1+C_{5.9}t)\pi^*\omega_0+C_{5.9}t^2\omega_{4.13}+\sqrt{-1}(\pi^*\chi+t^2\omega_{4.13}))^q\le0.\]
\label{Stability-on-blow-up-deformed-Hermitian-Yang-Mills}
\end{lemma}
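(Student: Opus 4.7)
The plan is to follow the template of Lemma \ref{Stability-on-blow-up}, but to track angles rather than real inequalities. Write
\[\Omega_t := (1+C_{5.9}t)\pi^*\omega_0 + C_{5.9}t^2\omega_{4.13} + \sqrt{-1}(\pi^*\chi+t^2\omega_{4.13}) = A_t + t^2(C_{5.9}+\sqrt{-1})\omega_{4.13},\]
with $A_t = (1+C_{5.9}t)\pi^*\omega_0 + \sqrt{-1}\pi^*\chi$. For a $q$-dimensional subvariety $V\subset\tilde M$ not contained in the exceptional divisor $E$, $\pi(V)$ is $q$-dimensional in $M$ and, since the intersection numbers in the expansion are real,
\[\int_V A_t^q = \int_{\pi(V)}((1+C_{5.9}t)\omega_0+\sqrt{-1}\chi)^q = \sqrt{-1}^{\,q}\,\overline{\int_{\pi(V)}(\chi+\sqrt{-1}(1+C_{5.9}t)\omega_0)^q}.\]
The hypothesis of Theorem \ref{Induction-defomed-Hermitian-Yang-Mills} applied on $\pi(V)$ gives $\theta_{\pi(V)}(1+C_{5.9}t)\ge\frac{q\pi}{2}-\theta_0+(n-q)\epsilon_{1.1}$, so $\arg\int_V A_t^q\in (0,\theta_0-(n-q)\epsilon_{1.1}]$. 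Multiplying by $e^{\sqrt{-1}(\epsilon_{1.1}/3-\theta_0)}$ rotates this interval into $(-\pi,-2\epsilon_{1.1}/3]$, showing that at $t=0$ the desired imaginary part is strictly negative with a uniform margin.

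Next I would control the $t>0$ perturbation by expanding
\[e^{\sqrt{-1}(\epsilon_{1.1}/3-\theta_0)}\int_V\Omega_t^q = \sum_{k=0}^{q}\binom{q}{k}t^{2k}(C_{5.9}+\sqrt{-1})^k\,e^{\sqrt{-1}(\epsilon_{1.1}/3-\theta_0)}\int_V A_t^{q-k}\wedge\omega_{4.13}^k.\]
The factor $(C_{5.9}+\sqrt{-1})^k$ has argument $k\arctan(1/C_{5.9})=k\epsilon_{1.1}/(6n)<\epsilon_{1.1}/6$ for $k\le q\le n-1$, and this is precisely the purpose of the choice $C_{5.9}=\cot(\epsilon_{1.1}/(6n))$: the total phase shift contributed by the $\omega_{4.13}$ factors stays strictly inside the $2\epsilon_{1.1}/3$ margin produced in the previous step. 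Since every $k\ge 1$ term carries an extra $t^{2k}$, for $t$ sufficiently small the $k\ge 1$ contributions cannot flip the sign of the imaginary part.

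To make this absorption uniform in $V$ and to handle the case $V\subset E$ (where $\dim\pi(V)<q$ and the leading $k=0$ term vanishes), I would replace $\omega_0$ by a better representative near $\pi(E)$ exactly as in Lemma \ref{Stability-on-blow-up}: applying Theorem \ref{Estimate-defomed-Hermitian-Yang-Mills} inductively on the smooth center $\pi(E)$, and extending by regularized max with a suitably rescaled $\log|s|_h^2$, to get $\omega_{4.19}'=\omega_0+\sqrt{-1}\partial\bar\partial\varphi_{4.19}'$ on $M$ that is a strict DHYM sub-solution (with improved angle $\theta_0-\epsilon_{1.1}/2$) on a neighborhood $O_{4.20}$ of $\pi(E)$. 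Outside $\pi^{-1}(O_{4.20})$, $\pi^*\omega_{4.19}'$, $\pi^*\chi$, and $\omega_{4.13}$ are uniformly comparable, reducing to the quantitative form of the previous paragraph; inside $\pi^{-1}(O_{4.20})$, the strict sub-solution property of $\omega_{4.19}'$ gives the pointwise angular inequality directly, and in the exceptional case $V\subset E$ the leading $k=q-\dim\pi(V)$ term still obeys the phase budget because $k\le q\le n-1$. The main obstacle is that, unlike the real inequality in Lemma \ref{Stability-on-blow-up}, the cross terms $A_t^{q-k}\wedge\omega_{4.13}^k$ are complex-valued and do not admit a clean pointwise strong positivity; one must carefully balance the stability margin $(n-q)\epsilon_{1.1}-\epsilon_{1.1}/3\ge 2\epsilon_{1.1}/3$ against the angular deficit $(n-1)\epsilon_{1.1}/(6n)<\epsilon_{1.1}/6$ from $(C_{5.9}+\sqrt{-1})^k$, and the choice of $C_{5.9}$ is engineered precisely for this balance.
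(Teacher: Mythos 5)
Your skeleton is right in broad strokes — you correctly track the $k=0$ argument from the stability hypothesis, you correctly recognize that the choice $C_{5.9}=\cot(\epsilon_{1.1}/(6n))$ must serve an angular budget, and you correctly see that a replacement of $\omega_0$ near $\pi(E)$ is needed to handle $V\subset E$. But the central mechanism you propose — a term-by-term phase budget in the binomial expansion $\sum_k\binom{q}{k}t^{2k}(C_{5.9}+\sqrt{-1})^k A_t^{q-k}\wedge\omega_{4.13}^k$ — has a genuine gap that you yourself flag at the end: for $k\ge 1$, the cross integrals $\int_V A_t^{q-k}\wedge\omega_{4.13}^k$ are complex numbers whose arguments are not controlled, so accounting for the phase of $(C_{5.9}+\sqrt{-1})^k$ alone tells you nothing about the argument of the full $k$-th term. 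Moreover, the ``uniform margin'' you claim at $t=0$ is a margin in the \emph{angle} of $\int_V A_0^q$, not in the magnitude of its imaginary part; since $|\int_V A_0^q|$ is not uniformly bounded below over all subvarieties $V$ (it degenerates as $V$ approaches $E$), the smallness-of-$t$ absorption step is not uniform. In the real analogue (Lemma \ref{Stability-on-blow-up}) the extra terms are positive forms, so there is no phase bookkeeping to do; here the lack of a positivity structure on the cross terms is exactly the obstacle, and ``balancing the margin against the angular deficit'' does not close it.

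The paper sidesteps the phase budget entirely. After constructing the improved representative $\omega_{5.14}=\omega_0+\sqrt{-1}\partial\bar\partial\varphi_{5.14}$ with $P_\chi(\omega_{5.14})<\pi/4$ on all of $M$ and $P_\chi(\omega_{5.14})<\theta_0-\epsilon_{1.1}/2$ on a neighborhood $O_{5.15}$ of $\pi(E)$, the lemma is reduced to a \emph{pointwise} inequality of $2q$-forms, proved in two moves. First, the gap between $(1+C_{5.9}t)\pi^*\chi$ and $\pi^*\chi$ is written as $\int_1^{1+C_{5.9}t}\mathrm{Re}(qe^{\sqrt{-1}(\epsilon_{1.1}/3-\theta_0)}((1+C_{5.9}t)\pi^*\omega_{5.14}+\sqrt{-1}\tau\pi^*\chi)^{q-1}\wedge\pi^*\chi)\,d\tau$, which is $\ge 0$ by a Lemma 8.2 of Collins--Jacob--Yau computation thanks to $P_\chi(\omega_{5.14})<\pi/4$. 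Second, adding the $\omega_{4.13}$ terms is handled by cases: inside $\pi^{-1}(O_{5.15})$ the strong sub-solution bound $\theta_0-\epsilon_{1.1}/2$ forces the resulting binomial tail to have the right sign, again by the Collins--Jacob--Yau Lemma 8.2 mechanism; outside $\pi^{-1}(O_{5.15})$ the forms are uniformly comparable and the first-order-in-$t$ term dominates as in Lemma \ref{Stability-on-blow-up}. The point is that these Collins--Jacob--Yau-type pointwise sign conditions replace the complex-argument bookkeeping you were attempting, and they are precisely what makes the argument rigorous. You should reorganize your proof around this two-step pointwise comparison rather than the raw binomial expansion.
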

\begin{proof}
We already know that
\[\mathrm{Im}\int_V e^{\sqrt{-1}(\frac{\epsilon_{1.1}}{3}-\theta_0)}((1+C_{5.9}t)\pi^*\omega_0+\sqrt{-1}(1+C_{5.9}t)\pi^*\chi)^q\le0.\]

So it suffices to show that
\[\begin{split}
&\mathrm{Im}\int_V e^{\sqrt{-1}(\frac{\epsilon_{1.1}}{3}-\theta_0)}((1+C_{5.9}t)\pi^*\omega_0+C_{5.9}t^2\omega_{4.13}+\sqrt{-1}(\pi^*\chi+t^2\omega_{4.13}))^q\\
&\le\mathrm{Im}\int_V e^{\sqrt{-1}(\frac{\epsilon_{1.1}}{3}-\theta_0)}((1+C_{5.9}t)\pi^*\omega_0+\sqrt{-1}(1+C_{5.9}t)\pi^*\chi)^q.
\end{split}\]

As before, there exists a smooth function $\varphi_{5.10}$ on a neighborhood $O_{5.11}$ of $\pi(E)$ in $M$ such that $\omega_{5.10}=\omega_0+\sqrt{-1}\partial\bar\partial\varphi_{5.10}>0$ satisfies $P_{\chi}(\omega_{5.10})<\theta_0-\frac{\epsilon_{1.1}}{2}$ on $O_{5.11}$. We define $\varphi_{4.17}$ as before on $M\setminus\pi(E)$. For any $s>0$, by our assumption, there exists $\varphi_{5.12}$ such that $\omega_{5.12}=(1+s)\omega_0+\sqrt{-1}\partial\bar\partial\varphi_{5.12}>0$ satisfies $P_\chi(\omega_{5.12})<\theta_0<\frac{\pi}{4}$. Choose $s$ small enough such that $P_\chi(\frac{\omega_{5.12}}{1+s})<\frac{\pi}{4}$. Choose a large enough constant $C_{5.13}$. Let $\varphi_{5.14}$ be the regularized maximum of $\varphi_{5.10}$ and $\frac{\omega_{5.12}}{1+s}+C_{5.13}^{-1}\varphi_{4.17}+C_{5.13}$. Then it is easy to see that $\varphi_{5.14}$ is smooth and $P_\chi(\omega_{5.14})<\frac{\pi}{4}$ on $M$ for $\omega_{5.14}=\omega_0+\sqrt{-1}\partial\bar\partial\varphi_{5.14}>0$. Moreover, there exists a smaller neighborhood $O_{5.15}$ of $\pi(E)$ such that $\varphi_{5.14}=\varphi_{5.10}$ on $O_{5.15}\subset O_{5.11}$.

After replacing $\omega_0$ by $\omega_{5.14}$, it suffices to show that
\[\begin{split}
&\mathrm{Im}(e^{\sqrt{-1}(\frac{\epsilon_{1.1}}{3}-\theta_0)}((1+C_{5.9}t)\pi^*\omega_{5.14}+C_{5.9}t^2\omega_{4.13}+\sqrt{-1}(\pi^*\chi+t^2\omega_{4.13}))^q)\\
&\le\mathrm{Im}(e^{\sqrt{-1}(\frac{\epsilon_{1.1}}{3}-\theta_0)}((1+C_{5.9}t)\pi^*\omega_{5.14}+\sqrt{-1}(1+C_{5.9}t)\pi^*\chi)^q).
\end{split}\]
First of all, \[\begin{split}
&\mathrm{Im}(e^{\sqrt{-1}(\frac{\epsilon_{1.1}}{3}-\theta_0)}((1+C_{5.9}t)\pi^*\omega_{5.14}+\sqrt{-1}(1+C_{5.9}t)\pi^*\chi)^q)\\
&-\mathrm{Im}(e^{\sqrt{-1}(\frac{\epsilon_{1.1}}{3}-\theta_0)}((1+C_{5.9}t)\pi^*\omega_{5.14}+\sqrt{-1}\pi^*\chi)^q)\\
&=\int_{1}^{1+C_{5.9}t}\mathrm{Re}(q e^{\sqrt{-1}(\frac{\epsilon_{1.1}}{3}-\theta_0)}((1+C_{5.9}t)\pi^*\omega_{5.14}+\sqrt{-1}\tau\pi^*\chi)^{q-1}\wedge\pi^*\chi)d\tau\\
&\ge0
\end{split}\]
using a calculation similar to Lemma 8.2 of \cite{CollinsJacobYau} because $P_\chi(\omega_{5.14})<\frac{\pi}{4}$ on $M$.
If the point is outside $\pi^{-1}(O_{5.15})$, then as before, we get the required inequality if $t$ is small enough. If the point is inside $\pi^{-1}(O_{5.15})$, then \[\begin{split}
&\mathrm{Im}(e^{\sqrt{-1}(\frac{\epsilon_{1.1}}{3}-\theta_0)}((1+C_{5.9}t)\pi^*\omega_{5.14}+C_{5.9}t^2\omega_{4.13}+\sqrt{-1}(\pi^*\chi+t^2\omega_{4.13}))^q)\\
-&\mathrm{Im}(e^{\sqrt{-1}(\frac{\epsilon_{1.1}}{3}-\theta_0)}((1+C_{5.9}t)\pi^*\omega_{5.14}+\sqrt{-1}\pi^*\chi)^q)\\
=&\mathrm{Im}(e^{\sqrt{-1}(\frac{\epsilon_{1.1}}{3}-\theta_0)}\sum_{k=0}^{q-1}\frac{q!}{k!(q-k)!}((1+C_{5.9}t)\pi^*\omega_{5.14}+\sqrt{-1}\pi^*\chi)^k\\\
&\wedge(C_{5.9}t^2\omega_{4.13}+\sqrt{-1}t^2\omega_{4.13})^{q-k})\\
\le&0
\end{split}\]
using a calculation similar to Lemma 8.2 of \cite{CollinsJacobYau} because $P_\chi(\omega_{5.14})<\theta_0-\frac{\epsilon_{1.1}}{2}$ on $O_{5.15}$. We are done.
\end{proof}

\end{document}